\documentclass[11pt]{amsart}
\usepackage{amsthm,amsmath,amssymb}

\usepackage{geometry}
 \geometry{
 a4paper,
 total={170mm,240mm},
 left=20mm,
 top=20mm,
 }
\usepackage{graphicx,cite,enumitem}
\usepackage{color}

\numberwithin{equation}{section}
\numberwithin{figure}{section}
\theoremstyle{plain}
\newtheorem{thm}{Theorem}[section]
\newtheorem{lem}[thm]{Lemma}

\theoremstyle{remark}
\newtheorem{rmk}[thm]{Remark}

\newcommand{\M}{\operatorname{M}}

\newcommand{\wt}{\operatorname{wt}}

\begin{document}

\title{Tiling generating functions of halved hexagons and quartered hexagons}

\author{Tri Lai}
\address{Department of Mathematics, University of Nebraska -- Lincoln, Lincoln, NE 68588, U.S.A.}
\email{tlai3@unl.edu}
\thanks{This research was supported in part  by Simons Foundation Collaboration Grant (\# 585923).}

\author{Ranjan Rohatgi}
\address{Department of Mathematics and Computer Science, Saint Mary's College, Notre Dame, IN 46556, U.S.A.}
\email{rrohatgi@saintmarys.edu}

\subjclass[2010]{05A15,  05B45}

\keywords{perfect matchings, plane partitions, lozenge tilings, shuffling phenomenon}

\date{\today}

\dedicatory{}

\begin{abstract}
We prove exact product formulas for the tiling generating functions of various halved hexagons and  quartered hexagons with defects on boundary. Our results generalize the previous work of the first author and the work of Ciucu.
\end{abstract}

\maketitle
\section{Introduction}\label{Sec:Intro}

Working on weighted enumerations of tilings often gives more insights than working on unweighted or `plain' counting of tilings, and can be more challenging. It is a fact that most known results in the field are unweighted tiling enumerations; results about tiling generating functions are very rare. In this paper, we provide simple product formulas for weighted enumerations of tilings of different types of \emph{quartered hexagons} and \emph{halved hexagons}.

A `\emph{halved hexagon}' is half of a vertically symmetric hexagon divided by a vertical zigzag cut along the symmetry axis (see Figures \ref{Fig:Onesidehole}(a) and (b) for examples). The study of halved hexagons began with the work of Proctor on certain classes of staircase plane partitions \cite[Corollary 4.1]{Proc}. His result implies an elegant tiling formula for a hexagon with a maximal staircase cut off, which can be viewed as a halved hexagon with a defect. We note that the tilings of a halved hexagon are in bijection with the \emph{transpose-complementary plane partitions}, one of the ten symmetry classes of plane partitions \cite{Stanley2}. We refer the reader to, e.g., \cite{Halfhex1, Halfhex2, Halfhex3, Ciucu2, CK02, LR, shuffling2, Rohatgi} for more discussion about tiling enumerations of halved hexagons. 

A `\emph{quartered hexagon}' is half of a horizontally symmetric halved hexagon divided along its horizontal symmetric axis. These regions have been investigated in several different contexts, see, e.g., \cite{Tri6, Tri7, Tri9, AF, KGV}. Some of these results show that tilings of quartered hexagons have fundamental connections to antisymmetric monotone triangles and classic group characters. 

As MacMahon's classical theorem on boxed plain partitions \cite{Mac} yields beautiful  $q$-enumerations of lozenge tilings of a `\emph{quasi-regular hexagon}', one would expect the existence of nice $q$-enumerations for halved hexagons and quartered hexagons. Here a \emph{quasi-regular hexagon} is a centrally symmetric hexagon with all $120^{\circ}$ angles.  However, all known enumerations of these regions are unweighted ones. In this paper, we provide nice $q$-enumerations for four different families of quartered hexagons and six families of halved hexagons.

A. Brodin, V. Gorin, and E. M. Rains \cite{Borodin} provide a systematic way to define weights for tilings as follows. They first assign an `elliptic weight' to lozenges on the plane, based on some coordinate system. Then the weight of a tiling is the product of weights of its lozenges. We will adapt and specialize Brodin--Gorin--Rains' elliptic weight in this paper.

The $i$-axis of our coordinate runs along a horizontal lattice line. The length of one unit on the $i$-axis is precisely half the width of a vertical lozenge or, equivalently, half the length of an edge of a lozenge. The $j$-axis is perpendicular to the $i$-axis at a lattice vertex (this vertex is the origin of our coordinate system); the length of one unit on the $j$-axis is equal to half the height of a vertical lozenge or, equivalently, $\sqrt{3}/2$ times the length of an edge of a lozenge. Figure \ref{Fig:weight} shows a particular placement and tiling of a hexagon on our coordinate system.
\begin{figure}\centering
\includegraphics[width=8cm]{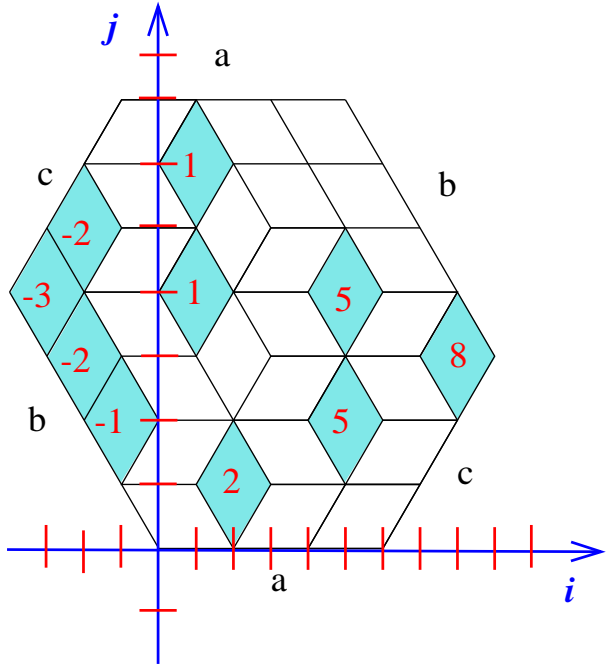}
\caption{Assigning weights to lozenges of a quasi-regular hexagon. Each lozenge with label $x$ has weight $\frac{q^{x}+q^{-x}}{2}$.}\label{Fig:weight}
\end{figure}

Each vertical lozenge with center at the point $(i,j)$ has weight
\begin{equation}
\wt_1(i,j)=\frac{q^{i}+q^{-i}}{2}.
\end{equation}
(The weight does not depend on $j$.) All other lozenges have weight $1$. The weight of a tiling is the product of the weights of its lozenges. We call $\wt_1$ the `\emph{symmetric weight}.' We also consider a variation $\wt_2$ of the weight $\wt_1$ by assigning to each vertical lozenge that intersects the $j$-axis a weight of $1/2$ (these lozenges have weight $1$ in $\wt_1$). 

We note that if we  assign the weight $\wt_3(i,j)=q^{i/2}$ to each vertical lozenge with center $(i,j)$, then each tiling $T$ of the quasi-regular hexagon has weight $C\cdot q^{Vol(T)}$, where $Vol(T)$ is the volume of the plane partition corresponding to the tiling $T$, and $C$ is a constant independent from the choice of tiling $T$. We call $\wt_3$ the `\emph{volume weight}.'  Unfortunately, this weight  does not often give nice $q$-enumerations. In contrast, the weight $\wt_1$ behaves much better. For example, M. Ciucu, T. Eisenk\"{o}lbl, C. Krattenthaler, and D. Zare \cite{CEKZ} have shown that the ``plain" tiling number of a `\emph{cored hexagon}' (a hexagon with an equilateral triangle removed from the center) is always given by a simple product formula. However, there is no such formula for the tiling generating function associated with $\wt_3$. On the other hand, as shown by H. Rosengren \cite{Rosen}, the symmetric weight $\wt_1$ gives a simple product formula for the tiling generating function of the cored hexagon. In this paper, we show that a similar thing happens for the case of halved hexagons and quartered hexagons. The volume weight does not yield a nice $q$-enumeration for them, however the symmetric weight and its variation give us elegant $q$-products.

The remainder of this paper is organized as follows. In Section \ref{Sec:Result}, we state in detail our main results. We provide exact formulas for the tiling generating functions (TGFs) of four families of \emph{quartered hexagons} (see Theorems \ref{quartthm1}--\ref{quartthm4}). Additionally, we investigate six families of halved hexagons with defects on the vertical (west) side. We also provide simple product formulas for their TGFs (see Theorems \ref{typeAthm}--\ref{typeTthm}). Section \ref{Sec:Pre} is devoted to several fundamental results in the enumeration of tilings. We also state two versions of Kuo condensation \cite{Kuo} that will be employed in our proofs. Sections \ref{Sec:Quarter} and \ref{Sec:Twosidehole} contain the proofs of our main theorems. In particular, Theorems \ref{quartthm1}--\ref{quartthm4} are proved in Section \ref{Sec:Quarter}, and Theorems \ref{typeAthm}--\ref{typeTthm} are proved in Section \ref{Sec:Twosidehole}.

\section{Main results}\label{Sec:Result}

\begin{figure}\centering
\setlength{\unitlength}{3947sp}%
\begingroup\makeatletter\ifx\SetFigFont\undefined%
\gdef\SetFigFont#1#2#3#4#5{%
  \reset@font\fontsize{#1}{#2pt}%
  \fontfamily{#3}\fontseries{#4}\fontshape{#5}%
  \selectfont}%
\fi\endgroup%
\resizebox{!}{7cm}{
\begin{picture}(0,0)%
\includegraphics{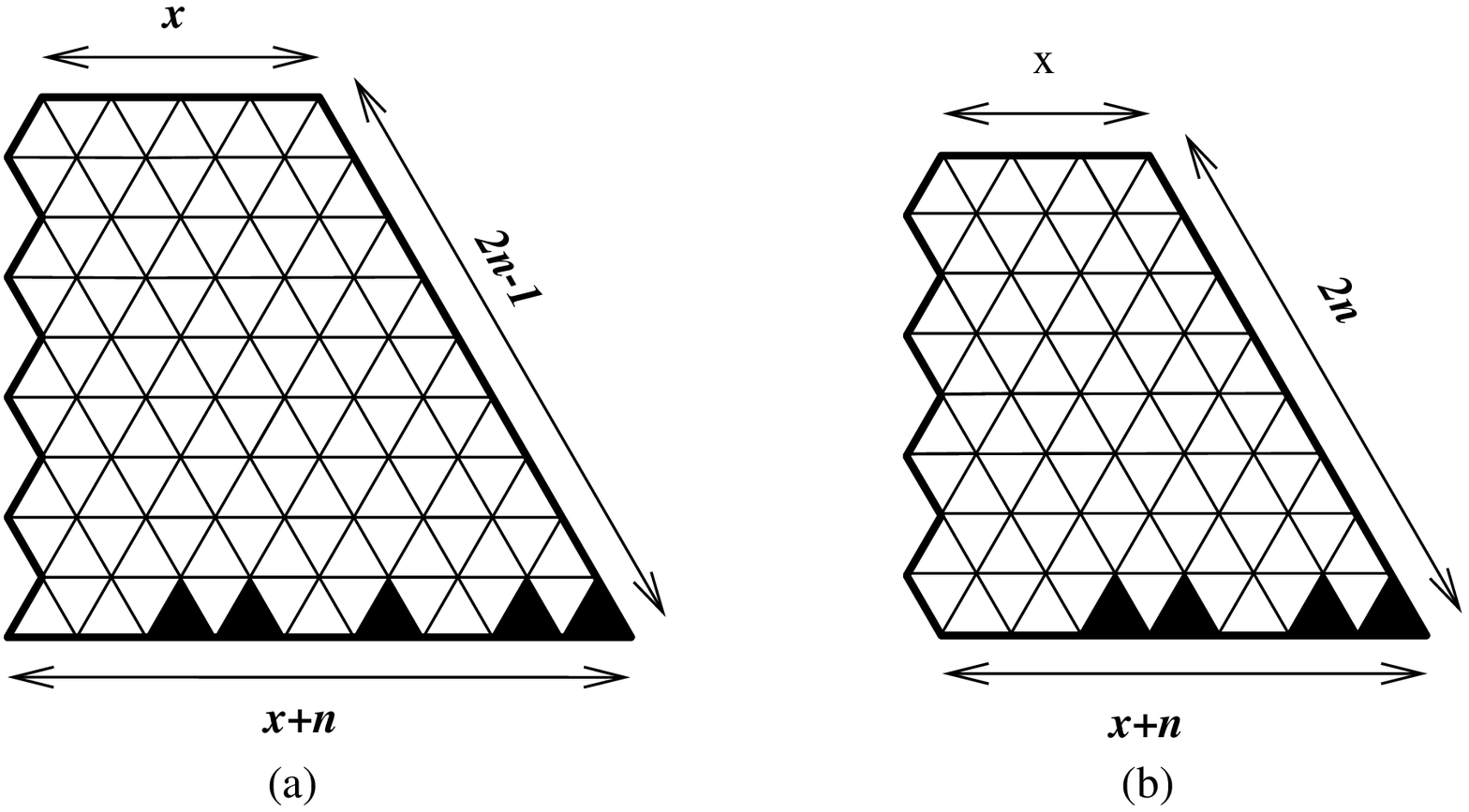}%
\end{picture}%
%
%

\begin{picture}(8654,4913)(2003,-4686)
\put(8454,-3562){\makebox(0,0)[lb]{\smash{{\SetFigFont{14}{16.8}{\rmdefault}{\mddefault}{\itdefault}{\color[rgb]{1,1,1}$s_1$}%
}}}}
\put(9677,-3577){\makebox(0,0)[lb]{\smash{{\SetFigFont{14}{16.8}{\rmdefault}{\mddefault}{\itdefault}{\color[rgb]{1,1,1}$s_3$}%
}}}}
\put(10082,-3569){\makebox(0,0)[lb]{\smash{{\SetFigFont{14}{16.8}{\rmdefault}{\mddefault}{\itdefault}{\color[rgb]{1,1,1}$s_4$}%
}}}}
\put(2941,-3601){\makebox(0,0)[lb]{\smash{{\SetFigFont{14}{16.8}{\rmdefault}{\mddefault}{\itdefault}{\color[rgb]{1,1,1}$s_1$}%
}}}}
\put(3361,-3609){\makebox(0,0)[lb]{\smash{{\SetFigFont{14}{16.8}{\rmdefault}{\mddefault}{\itdefault}{\color[rgb]{1,1,1}$s_2$}%
}}}}
\put(8859,-3577){\makebox(0,0)[lb]{\smash{{\SetFigFont{14}{16.8}{\rmdefault}{\mddefault}{\itdefault}{\color[rgb]{1,1,1}$s_2$}%
}}}}
\put(4164,-3609){\makebox(0,0)[lb]{\smash{{\SetFigFont{14}{16.8}{\rmdefault}{\mddefault}{\itdefault}{\color[rgb]{1,1,1}$s_3$}%
}}}}
\put(4989,-3601){\makebox(0,0)[lb]{\smash{{\SetFigFont{14}{16.8}{\rmdefault}{\mddefault}{\itdefault}{\color[rgb]{1,1,1}$s_4$}%
}}}}
\put(5386,-3609){\makebox(0,0)[lb]{\smash{{\SetFigFont{14}{16.8}{\rmdefault}{\mddefault}{\itdefault}{\color[rgb]{1,1,1}$s_5$}%
}}}}
\end{picture}}
\caption{Two different types of quartered hexagons.}\label{Fig:Quarter}
\end{figure}

\begin{figure}\centering
\includegraphics[width=15cm]{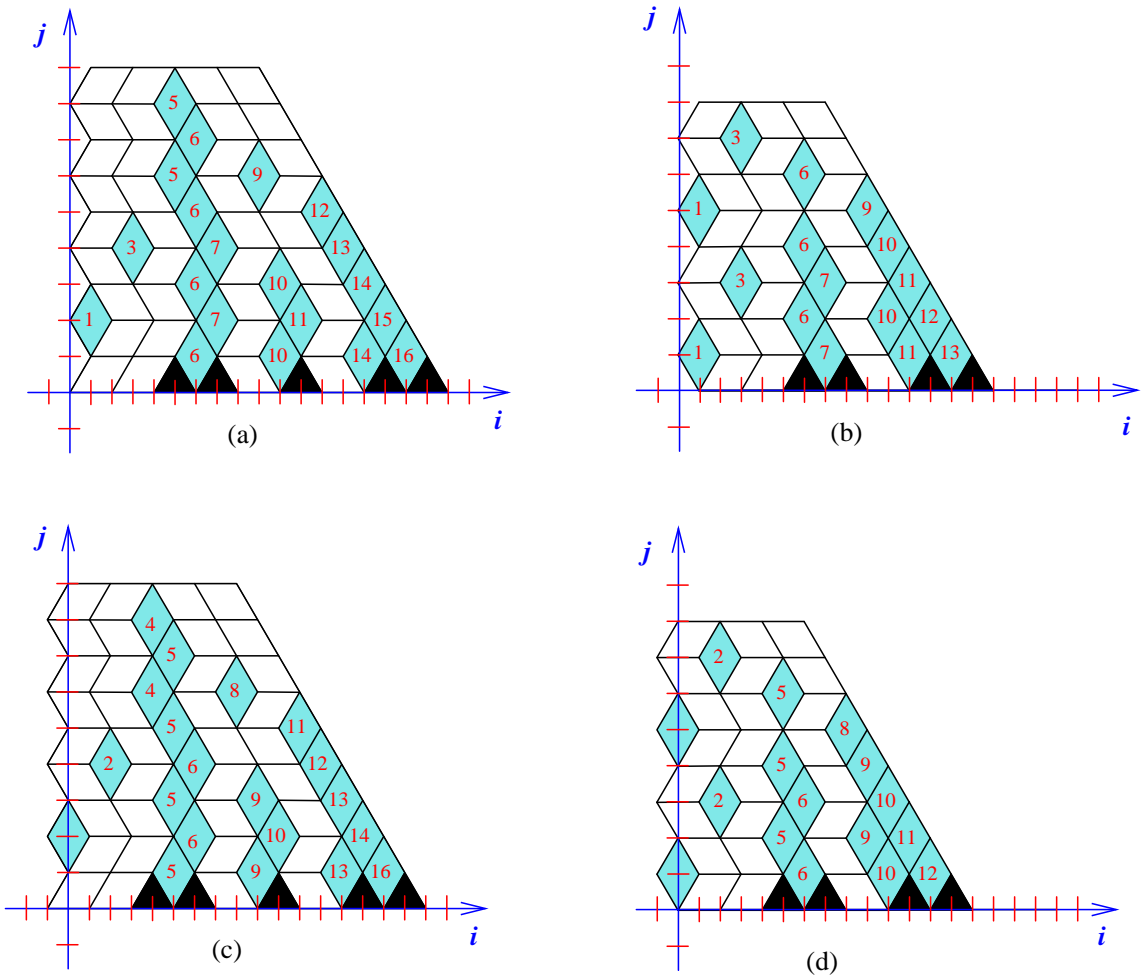}
\caption{Four different weight assignments for four types of weighted quartered hexagons.}\label{Fig:QuarterTile}
\end{figure}

 All regions considered in this paper are weighted regions. Strictly speaking, the a `\emph{weighted region}' is a pair $(R,\wt)$, where $R$ is an unweighted region on the triangular lattice, called the ``shape" of the region, and $\wt$ is a weight assignment for the tilings of $R$. We will see in the next part of the paper that there exist different weighted regions that have the same shape. Whenever the weight assignment is clearly given, we abuse the notation by viewing $R$ as the weighted region. In the rest of the paper, we use the notation $\M(R)$ for the weighted sum of all tilings of $R$. If $R$ does not have any tiling, then $\M(R)=0$. When $R$ is a degenerated region (i.e., a region with empty interior),
$\M(R)=1$ by convention. We call $\M(R)$ the \emph{tiling generating function} of $R$.

\subsection{Quartered hexagons}
While investigating a generalization of  Jockusch--Propp's quartered Aztec diamond \cite{JP}, the first author proved simple product formulas for four families of the quartered hexagons \cite{Tri6, Tri7} (see Figure \ref{Fig:Quarter}). The tilings of the quartered hexagons have interesting connections to antisymmetric monotone triangles \cite{Tri9} and characters of classical groups \cite{AF, KGV}. 

The four different types of quartered hexagons are as follows. 

We start with a right trapezoidal region whose side-lengths are $x,2n-1,x+n,2n-1$ in counter-clockwise order, starting from the north side\footnote{From now on, we always list the side-lengths of a region in this order.}. The north, northeast, and south sides of the region follow lattice lines, and the west side follows a vertical zigzag. We also remove from the base of the region $n$ up-pointing unit triangles at the positions $s_1,s_2,\dots,s_n$, from left to right (see the shape of the region in  Figure \ref{Fig:Quarter}(a); the black unit triangles indicate the removed ones). These removed triangles are called ``\emph{dents}." Next, we assign weight to lozenges of the region (which may be used in tilings) using the symmetric weight $\wt_1$ as in Figure \ref{Fig:QuarterTile}(a).  The $j$-axis is touching the vertical west side of the region, and the $i$-axis runs along the base.  The vertical lozenge with center at the point $(i,j)$ has weight $\frac{q^{i}+q^{-i}}{2}$, and the weight of a tiling is the product of lozenge-weights as usual. Denote by $R^{1}_{x}(s_1,s_2,\dots,s_n)$ the resulting weighted region.

The second family of weighted quartered hexagons is defined similarly. We start with a right trapezoidal region of side-lengths $x,2n,x+n,2n$ and remove $n$ up-pointing unit triangles along the base. Assume that the positions of the removed unit triangles are $s_1,s_2,\dots,s_n$. See Figure \ref{Fig:Quarter}(b) for the shape of the region. We again use the symmetric weight $\wt_1$ for lozenges in the new region, as shown in Figure \ref{Fig:QuarterTile}(b).  Denote by $R^{2}_{x}(s_1,s_2,\dots,s_n)$ this new weighted region. The TGFs of these two types of quartered hexagons are given by simple product formulas.

\begin{thm}\label{quartthm1} Assume that $x$ and $n$ are non-negative integers and $(s_i)_{i=1}^{n}$ is a sequence of positive integers between $1$ and $n+x$. Then
\begin{align}\label{quarterformula1}
\M(R^{1}_{x}(s_1,s_2,\dots,s_n))&=2^{-n(n-1)}q^{-\sum_{i=1}^n(i-1)(4s_i-2i-1)}\frac{\prod_{1\leq i < j\leq n}[2(s_i+s_j-1)]_{q^2}[2(s_j-s_i)]_{q^2}}{\prod_{i=1}^{n}[2i-2]_{q^2}!}\notag\\
&=2^{-n(n-1)}q^{-\sum_{i=1}^n(i-1)(4s_i-2i-1)}\prod_{1\leq i < j\leq n}\frac{[2(s_i+s_j-1)]_{q^2}}{[i+j-1]_{q^2}}\prod_{1\leq i < j\leq n}\frac{[2(s_j-s_i)]_{q^2}}{[j-i]_{q^2}}.
\end{align}
\end{thm}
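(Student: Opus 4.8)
The plan is to prove the formula by induction on $\sum_{i=1}^n s_i$ (equivalently, by a double induction on $n$ and the $s_i$), using Kuo condensation as the main engine, exactly as in the unweighted treatments of these quartered hexagons \cite{Tri6, Tri7}. First I would verify the base cases: when $n=0$ the region is degenerate and $\M = 1$, matching the empty products; when all dents are in their leftmost positions $s_i = i$, the region $R^1_x(1,2,\dots,n)$ should have a product formula that one can check directly (it forces a large forced region and reduces to a smaller trapezoid or to a known MacMahon-type evaluation), and the exponent of $q$ collapses because $4s_i - 2i - 1 = 2i - 1$ there. I would also record the trivial consistency check that the right-hand side vanishes when two dents coincide ($s_i = s_j$ gives the factor $[2(s_j - s_i)]_{q^2} = 0$), which matches $\M = 0$ on the left.

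The core of the argument is to set up a Kuo condensation recurrence for $\M(R^1_x(s_1,\dots,s_n))$. Following the preliminary section, I would choose four boundary unit triangles on the base of the trapezoid — two near the chosen dent and two reference points — so that the four regions appearing in Kuo's identity are again quartered hexagons of the same type (types $R^1$ and possibly $R^2$) but with one of the $s_i$ shifted, or with parameters $(x,n)$ altered by small amounts. The key bookkeeping step is to track how the \emph{weight} transforms: removing or shifting a dent changes which vertical lozenges are forced and which centers $(i,j)$ occur, so the symmetric weights $\frac{q^i + q^{-i}}{2}$ contribute an explicit power of $q$ and an explicit power of $2$ to each term of the recurrence. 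I would show that the claimed product — call the right-hand side $P(x;s_1,\dots,s_n)$ — satisfies the same recurrence, by verifying the identity
\[
P(\cdot)\,P(\cdot) = P(\cdot)\,P(\cdot) + P(\cdot)\,P(\cdot)
\]
obtained by substituting the product formula into Kuo's three-term relation; after dividing through by a common product this reduces to a $q$-polynomial identity among the $[2(s_i+s_j-1)]_{q^2}$, $[2(s_j-s_i)]_{q^2}$, $[2i-2]_{q^2}!$ factors and the prefactors $2^{-n(n-1)}$, $q^{-\sum(i-1)(4s_i-2i-1)}$, which I expect to collapse to a short $q$-binomial-type identity.

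The main obstacle I anticipate is twofold. First, choosing the four Kuo vertices correctly so that \emph{all four} sub-regions are instances of the families we are inducting over (rather than some region outside the scope of the theorem), and so that the induction parameter strictly decreases — this is where the $R^2$ family must be brought in, and one must make sure both Theorems behave compatibly. Second, and more delicate, is the exact tracking of the $q$-power and the power of $2$ in the prefactor: the exponent $\sum_{i=1}^n (i-1)(4s_i - 2i - 1)$ is not symmetric and shifts in a nonobvious way when a dent moves or when $n$ changes, so I would isolate a lemma computing the weight of the forced lozenges created by each Kuo move and check that the bookkeeping closes. Once the recurrence and the matching identity for $P$ are in hand, the induction is routine, and the second displayed form of the formula follows from the first by rewriting $\prod_{i=1}^n [2i-2]_{q^2}! = \prod_{1\le i<j\le n}[i+j-1]_{q^2}[j-i]_{q^2}$, a standard reindexing of the superfactorial.
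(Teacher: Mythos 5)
Your overall architecture (induction plus Kuo condensation plus verifying that the product formula satisfies the resulting three-term recurrence) matches the paper's, but there is a concrete gap in the base case. The paper's induction statistic is $x+n+l$, where $n-l+1$ is the size of the maximal cluster of dents attached to the lower-right corner; the essential base case $l=1$ is the configuration with all dents pushed to the \emph{right}, which after removing forced lozenges becomes the weighted halved hexagon $P_{x,n-1}$. The tiling generating function of that halved hexagon is not a ``known MacMahon-type evaluation'': it is Lemma \ref{proctorlem} of the paper, itself proved by a separate Kuo condensation argument with its own recurrence $\M(P_{n,x})\M(P_{n-2,x})=\wt(l_0)\,\M(P_{n-1,x})^2+\M(P_{n-2,x+1})\M(P_{n,x-1})$. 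Your proposal contains no analogue of this auxiliary lemma, and your suggested terminal configuration (all dents leftmost, $s_i=i$) is not the one the recursion actually bottoms out on; without Lemma \ref{proctorlem} the induction has no floor.

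Second, your Kuo setup differs from the one that works and is not obviously viable. The paper does not place all four vertices on the base: it first fills the leftmost dent $s_1$ to form an auxiliary region, then takes $u$ at position $\alpha=s_l-1$ and $v$ at position $s_1$ on the base, with $w$ and $s$ at the upper-right corner. The resulting recurrence stays entirely within the $R^{1}$ family --- no $R^{2}$-type regions appear, contrary to your expectation --- and all forced-lozenge weights cancel, so the recurrence carries no extra powers of $q$ or $2$ at all; the delicate prefactor bookkeeping you anticipate simply does not arise in this step. Your plan of ``two near the chosen dent and two reference points'' on the base would have to be checked against the bipartite-class condition of Lemma \ref{Kuolem1} and would produce a different, unverified recurrence. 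Finally, induction on $\sum_i s_i$ alone cannot close, since the recurrence changes $n$ and $x$ as well as the dent positions; the statistic must account for all three, as $x+n+l$ does.
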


\begin{thm}\label{quartthm2} Assume that $x$ and $n$ are non-negative integers and $(s_i)_{i=1}^{n}$ is a sequence of positive integers between $1$ and $n+x$. Then
\begin{align}\label{quarterformula2}
\M(R^{2}_{x}&(s_1,s_2,\dots,s_n))=2^{-n^2}q^{-\sum_{i=1}^n((4i-2)s_i-2i^2+i)}\frac{\prod_{i=1}^{n}[2s_i]_{q^2}\prod_{1\leq i < j\leq n}[2(s_i+s_j)]_{q^2}[2(s_j-s_i)]_{q^2}}{\prod_{i=1}^{n}[2i-1]_{q^2}!}\notag\\
&=2^{-n^2}q^{-\sum_{i=1}^n((4i-2)s_i-2i^2+i)}\prod_{i=1}^{n}\frac{1}{q^{4s_i}+1}\prod_{1\leq i \leq j\leq n}\frac{[2(s_i+s_j)]_{q^2}}{[i+j-1]_{q^2}} \prod_{1\leq i < j\leq n}\frac{[2(s_j-s_i)]_{q^2}}{[j-i]_{q^2}}.
\end{align}
\end{thm}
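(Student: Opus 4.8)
The plan is to prove Theorems \ref{quartthm1} and \ref{quartthm2} simultaneously by induction, using Kuo condensation (both versions of which are stated in Section \ref{Sec:Pre}) to set up a recurrence in the parameters $s_1,\dots,s_n$ and $x$. First I would verify the base cases: when $n=0$ the region is a trapezoid with no dents, whose TGF is $1$ by the degenerate/empty convention, matching the empty products on the right-hand sides; and when $n=1$ the region is a trapezoid with a single dent on the base, whose weighted tiling count reduces to a one-dimensional sum of the form $\sum q^{\pm\text{linear}}$ that telescopes to the claimed closed form. These small cases anchor the induction. It is convenient to observe that the two product formulas are of the standard ``antisymmetrizer'' shape: up to the explicit $2$-power and $q$-power prefactors, $\M(R^1_x(\mathbf{s}))$ and $\M(R^2_x(\mathbf{s}))$ are, respectively, a Schur-like ratio in the variables $[2s_i]_{q^2}$, so the right-hand sides already satisfy the Desnanot--Jacobi / Kuo-type three-term identities one expects; the work is to show the left-hand sides do too.

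The main step is to apply Kuo condensation to the weighted region. I would choose the four distinguished boundary unit triangles so that removing pairs of them produces regions of the same family with shifted $s$-parameters: typically, two of the chosen triangles sit at consecutive dent positions (or at a dent position and an adjacent free position on the base) and two sit on the zigzag west side or the northeast side, so that the five regions appearing in the Kuo identity are $R^{k}_{x}(\mathbf{s})$ together with four regions of the form $R^{k}_{x}(\mathbf{s}')$ or $R^{k'}_{x'}(\mathbf{s}'')$ with strictly smaller complexity (fewer dents, smaller $x$, or a ``merged'' configuration that degenerates). One must track the weight carefully: since $\wt_1$ assigns the vertical lozenge centered at $(i,j)$ the weight $\tfrac{q^i+q^{-i}}{2}$ independent of $j$, the forced lozenges created when a boundary triangle is removed contribute an explicit monomial in $q$ (and a power of $\tfrac12$) that I would factor out; this is exactly what produces the prefactors $2^{-n(n-1)}q^{-\sum(i-1)(4s_i-2i-1)}$ and $2^{-n^2}q^{-\sum((4i-2)s_i-2i^2+i)}$ in the two theorems. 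Substituting the induction hypothesis into the Kuo recurrence then reduces everything to a $q$-polynomial identity among products of $[m]_{q^2}$-factors, which I would verify by the standard manipulation of splitting off the terms involving the moved index and using $[a+b]_{q^2}$-type addition rules.

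The interplay between the two theorems is essential: when Kuo condensation is applied to $R^1_x$ the natural ``half-step'' regions that appear have an even middle side-length, i.e. they are regions of type $R^2$ (and vice versa), so neither formula can be proved in isolation — they must be handled together in one induction on, say, $n$ and then on $\sum s_i$. I would also need the reduction lemmas from Section \ref{Sec:Pre} to peel off forced lozenges along the west zigzag and to identify a region with two coinciding dent positions as having no tilings (giving the vanishing factor $[2(s_j-s_i)]_{q^2}$ at $s_i=s_j$) or as degenerating to a smaller family.

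The hard part will be the bookkeeping of the weight and the choice of the four Kuo vertices: one must position them so that \emph{all four} of the resulting subregions stay within the two families $\{R^1, R^2\}$ with a genuine decrease in complexity, and simultaneously so that the monomial weight factors pulled out of the forced lozenges combine to exactly the claimed prefactors rather than some unrecognizable correction term. Once the correct vertex configuration is found, verifying the resulting $[m]_{q^2}$-product identity is routine but lengthy; I would isolate it as a separate algebraic lemma to keep the induction clean.
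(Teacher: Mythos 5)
Your overall strategy (Kuo condensation plus induction, with a final Pl\"ucker-type $q$-product identity) is the right family of ideas, but as written the induction does not close. The paper's induction for the $R^{k}_x$ regions is on the statistic $x+n+l$, where $n-l+1$ is the size of the maximal cluster of dents attached to the lower-right corner, and the indispensable base case is $l=1$: all dents contiguous at the corner. In that configuration forced lozenges collapse the quartered hexagon to a weighted halved hexagon $P_{n,x}$ (resp.\ $P'_{n,x}$), whose TGF is itself a nontrivial product formula (Lemmas \ref{proctorlem} and \ref{proctorlem2}) proved by a separate Kuo induction with its own recurrence. Your proposed base cases $n=0$ and $n=1$ do not cover this, and no placement of the four Kuo vertices will let the recurrence bottom out only at $n\le 1$: whichever way you shuffle dents, the recursion terminates at fully clustered dent configurations, and those are exactly the weighted halved hexagons. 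Without something playing the role of Lemma \ref{proctorlem}, the argument has a hole at its foundation.

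Second, your structural claim that the two theorems ``must be handled together'' because Kuo condensation on $R^1_x$ produces $R^2$-type regions is unsubstantiated and is not what happens with the correct vertex choice. Taking $G$ to be the dual graph of $R^2_x(s_1,\dots,s_n)$ with the leftmost dent filled, $u$ at the free position $\alpha=s_l-1$, $v$ at $s_1$, and $w,s$ at the upper-right corner, all five companion regions in \eqref{Kuoeq1} are again of type $R^2$ (with $n$, $x$, and the dent set shifted), and the forced-lozenge weights cancel completely rather than producing the prefactors, which instead emerge from verifying that the closed form satisfies the resulting three-term recurrence. Each of Theorems \ref{quartthm1}--\ref{quartthm4} is therefore proved by a self-contained induction; the genuine interleaving of two families with NE sides of different parities occurs for the $A$- and $B$-type halved hexagons in Section \ref{Sec:Twosidehole}, not here.
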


We are also interested in the siblings of the previous regions which use the variation $\wt_2$  of the symmetric weight $\wt_1$. The third family of weighted quartered hexagons has the same shape as the first one; the only difference is the weight assignment $\wt_2$ has been used. In particular, the $j$-axis now passes through the center of the vertical lozenges along the west side of the region. Each vertical lozenge with center at the point $(i,j)$ is still weighted by $\frac{{q}^{i}+q^{-i}}{2}$, with one exception: the  vertical lozenges intersected by the $j$-axis are weighted by $1/2$ (see Figure \ref{Fig:QuarterTile}(c)). Denote by $R^{3}_{x}(s_1,s_2,\dots,s_n)$ this variation of the $R^{1}$-type regions. Finally, the weighted quartered hexagon $R^{4}_{x}(s_1,s_2,\dots,s_n)$ has the same shape as the one in the second family; the only difference is the weight assignment. We again use the weight $\wt_2$, instead of $\wt_1$. See Figure \ref{Fig:QuarterTile}(d) for the details. We also have nice formulas for the TGFs of these new quartered hexagons.

\begin{thm}\label{quartthm3} Assume that $x$ and $n$ are non-negative integers and $(s_i)_{i=1}^{n}$ is a sequence of positive integers between $1$ and $n+x$. Then
\begin{align}\label{quarterformula3}
\M(R^{3}_{x}&(s_1,s_2,\dots,s_n))=2^{-n(n-1)}q^{-\sum_{i=1}^n((4i-4)s_i-2i^2-i+3)}\frac{\prod_{1\leq i < j\leq n}[2(s_i+s_j-2)]_{q^2}[2(s_j-s_i)]_{q^2}}{\prod_{i=1}^{n}[2i-2]_{q^2}!}\notag\\
&=2^{-n(n-1)}q^{-\sum_{i=1}^n((4i-4)s_i-2i^2-i+3)}\prod_{1\leq i < j\leq n}\frac{[2(s_i+s_j-2)]_{q^2}}{[i+j-1]_{q^2}} \prod_{1\leq i < j\leq n}\frac{[2(s_j-s_i)]_{q^2}}{[j-i]_{q^2}}.
\end{align}
\end{thm}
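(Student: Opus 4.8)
The plan is to prove Theorem \ref{quartthm3} by the same machinery that underlies Theorems \ref{quartthm1} and \ref{quartthm2}: pass to the planar dual bipartite graph of the weighted region and run a Kuo-condensation recurrence, verifying that the product on the right-hand side of \eqref{quarterformula3} satisfies it. Before committing to that, I would first check whether $R^{3}_{x}(s_1,\dots,s_n)$ reduces directly to an $R^{1}$-region. The product in \eqref{quarterformula3} is obtained from the one in \eqref{quarterformula1} by the substitution $s_i+s_j-1\mapsto s_i+s_j-2$ — the factors $[2(s_j-s_i)]_{q^2}$, the denominator, and the power of $2$ are untouched — together with an overall monomial of the shape $q^{n(n-1)}$. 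This strongly suggests that a short forced-lozenge argument, or a single application of Kuo condensation linking the $\wt_2$-region to a $\wt_1$-region with shifted dent positions, might express $\M(R^{3}_{x}(s_1,\dots,s_n))$ as an explicit monomial times $\M$ of an $R^{1}$-region; if such a reduction works it is by far the quickest route, since Theorem \ref{quartthm1} is then available as a black box. If no clean reduction presents itself, I would argue from scratch as follows.

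The induction is on $n$, with a secondary induction on $\sum_{i=1}^{n}s_i$ (equivalently on the total displacement of the dents from the leftmost configuration $s_i=i$; we may assume the $s_i$ distinct and increasing, as otherwise some $[2(s_j-s_i)]_{q^2}$ vanishes and the region has no tiling). For the base cases, the small-$n$ regions are degenerate or have a tiling whose weight is computed directly, and when $s_i=i$ for all $i$ a block of lozenges along the base and the west side is forced, after which the region collapses to a strictly smaller member of the same family (or, once the forced lozenges are stripped off, to an $R^{1}$-region already handled). For the inductive step I would apply one of the two Kuo condensation identities recalled in Section \ref{Sec:Pre} to the dual graph $G$ of $R^{3}_{x}(s_1,\dots,s_n)$, choosing the four distinguished boundary unit triangles (two up-pointing, two down-pointing, in the cyclic order dictated by the version used) so that $\M(G)=\M(R^{3}_{x}(s_1,\dots,s_n))$ and each of the five remaining factors is again the TGF of an $R^{3}$-region whose dent vector is obtained from $(s_i)$ by lowering one entry by $1$ and/or deleting an entry, up to explicit monomial weight factors coming from lozenges forced near the chosen triangles. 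This turns Kuo's identity into a recurrence that stays inside the $R^{3}$-family, to which the inductive hypothesis applies; substituting \eqref{quarterformula3} then reduces everything, after the common $q$-integer factors cancel, to an elementary identity among a bounded number of $q$-integers of the form $[a]_{q^2}[b]_{q^2}=[c]_{q^2}[d]_{q^2}+(\text{explicit power of }q)\,[e]_{q^2}[f]_{q^2}$, plus a bookkeeping check on the powers of $2$ and on the $q$-monomial prefactor.

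I expect the genuine difficulty to be twofold. First, one must pin down the Kuo vertices so that the recurrence really closes inside the $R^{3}$-family rather than leaking into some hybrid region, and one must separately dispatch the degenerate sub-cases (an $s_i$ hitting $1$ or $n+x$, or two becoming equal) in which some of the six regions have no tiling and the corresponding $\M$ vanishes. Second — and this is where the $\wt_2$-sibling really differs from the $\wt_1$-case of Theorem \ref{quartthm1} — the exceptional weight $\tfrac12$ that $\wt_2$ assigns to the vertical lozenges meeting the $j$-axis, combined with the fact that under $\wt_2$ the origin sits at a lozenge center rather than at a lattice edge, shifts every vertical-lozenge weight by one unit in the exponent; tracking how this interacts with the lozenges that get forced or deleted at each node of the recurrence is exactly what produces the extra $q^{n(n-1)}$-type monomial and the altered $s_i+s_j-2$, and getting these contributions right everywhere is the main hazard.
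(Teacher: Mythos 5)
Your overall strategy --- pass to the dual graph, run a Kuo-condensation recurrence, and verify that the product formula satisfies it by induction --- is exactly the paper's: Theorem \ref{quartthm1} is proved this way, and the paper states that Theorem \ref{quartthm3} follows by the identical argument with the Kuo setup of Figure \ref{Fig:WQKuo1}(c). Your preferred shortcut, however, should be discarded up front: the observation that \eqref{quarterformula3} is \eqref{quarterformula1} under $s_i\mapsto s_i-1/2$ is precisely Remark \ref{reciprocity}, where the authors explicitly state that a direct explanation not passing through the computed TGFs is an open problem, so no forced-lozenge or one-shot-condensation reduction from $R^{3}$ to $R^{1}$ is available. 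That leaves your fallback plan, which has two genuine gaps.

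First, the induction scheme. The recurrence that the natural Kuo setup actually produces (fill the leftmost dent $s_1$, place $u$ at $\alpha=s_l-1$, where $l$ records the maximal cluster of dents adjacent on the right, and place $w,s$ at the upper-right corner) relates $R^{3}_{x}(\{s_i\}_{i=1}^{n})$ to regions such as $R^{3}_{x}(\alpha\{s_i\}_{i=2}^{n})$ and $R^{3}_{x\pm1}(\cdots)$. It does \emph{not} have the ``lower one dent by $1$ and/or delete one'' form you posit: it inserts a \emph{new} dent at $\alpha=s_l-1$, which satisfies $\alpha>s_1$, and it shifts $x$ by $\pm1$. In particular $R^{3}_{x}(\alpha\{s_i\}_{i=2}^{n})$ has the same number of dents as the original region and a strictly larger dent-sum, so your primary-on-$n$, secondary-on-$\sum_i s_i$ induction does not decrease along the recurrence; the paper inducts instead on $x+n+l$. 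You would need either to exhibit a Kuo configuration realizing the recurrence you describe or to adopt a statistic like the paper's. Second, and more seriously, you are missing the anchor of the induction: when the dents form a single cluster attached to the lower-\emph{right} corner ($l=1$), no Kuo step applies and the region reduces by forced lozenges to the $\wt_2$-weighted halved hexagon $P'_{n,x}$, whose tiling generating function is Lemma \ref{proctorlem2} --- a nontrivial product formula proved by its own separate Kuo-condensation induction. Your listed base cases ($n$ small, $x=0$, $s_i=i$) do not cover this configuration, and without Lemma \ref{proctorlem2} or an equivalent the induction has nothing to bottom out on.
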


\begin{thm}\label{quartthm4} Assume that $x$ and $n$ are non-negative integers and $(s_i)_{i=1}^{n}$ is a sequence of positive integers between $1$ and $n+x$. Then
\begin{align}\label{quarterformula4}
\M(R^{4}_{x}&(s_1,s_2,\dots,s_n))=2^{-n^2}q^{-\sum_{i=1}^n((4i-2)s_i-2i^2-i+1)}\notag\\
&\times\frac{\prod_{i=1}^{n}[2s_i-1]_{q^2}\prod_{1\leq i < j\leq n}[2(s_i+s_j-1)]_{q^2}[2(s_j-s_i)]_{q^2}}{\prod_{i=1}^{n}[2i-1]_{q^2}!} \notag\\
&=2^{-n^2}q^{-\sum_{i=1}^n((4i-2)s_i-2i^2-i+1)}\prod_{i=1}^{n}\frac{1}{q^{4s_i-2}+1}\prod_{1\leq i \leq j\leq n}\frac{[2(s_i+s_j-1)]_{q^2}}{[i+j-1]_{q^2}} \prod_{1\leq i < j\leq n}\frac{[2(s_j-s_i)]_{q^2}}{[j-i]_{q^2}}.
\end{align}
\end{thm}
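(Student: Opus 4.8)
The plan is to prove Theorem~\ref{quartthm4} by induction, in parallel with (and, where convenient, using) Theorems~\ref{quartthm1}--\ref{quartthm3}, the engine of the induction being Kuo condensation. Fix $n$ and induct on the total displacement $d:=\sum_{i=1}^{n}(s_i-i)$ of the dents, with a secondary induction on $n$. If two of the $s_i$ coincide, then $[2(s_j-s_i)]_{q^2}=[0]_{q^2}=0$ and the region has no tiling, so both sides of \eqref{quarterformula4} vanish; hence we may assume $1\le s_1<s_2<\cdots<s_n\le n+x$. For the base case $d=0$ we have $s_i=i$ for all $i$: the dents are packed against the west zigzag wall, a large block of lozenges along the base is forced, and after deleting the forced block $R^{4}_{x}(1,2,\dots,n)$ reduces to a weighted region whose tiling generating function is already known (it collapses to a smaller $R^{4}$-type region, or to a plain-counted quartered hexagon times the explicit monomial weight of the forced block), while the right-hand side of \eqref{quarterformula4} telescopes to the matching value. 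The cases $n=0,1$ are immediate from the empty/one-factor products.

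For the inductive step, regard the lozenge tilings of $R:=R^{4}_{x}(s_1,\dots,s_n)$ as perfect matchings of the associated weighted planar bipartite graph and apply the appropriate form of Kuo condensation from Section~\ref{Sec:Pre}. I would place the four distinguished unit triangles on the boundary of $R$ --- two of them among the dents along the base (next to $s_k$ and $s_{k+1}$ for a well-chosen $k$, or at the leftmost dent and the south-west corner), one on the north-east lattice side, and one on the west zigzag side --- so that each of the four ``pair-removed'' regions appearing in the condensation identity is again a quartered hexagon of type $R^{4}$ (or of type $R^{3}$, already treated in Theorem~\ref{quartthm3}) with the same $n$ but with some $s_i$ decreased by $1$, hence with strictly smaller displacement $d$. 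Removing these triangles forces a number of vertical lozenges in their vicinity; the product of the weights of these forced lozenges is an explicit monomial $c\,q^{e}$, where $e$ is linear in the $s_i$, $c$ is a power of $2$, and the anomalous weight $\tfrac12$ carried by the vertical column meeting the $j$-axis must be tracked carefully. These monomials become the coefficients in the resulting three-term recurrence
\[
\M(R)\,\M(R_0)=\alpha\,\M(R_1)\,\M(R_2)+\beta\,\M(R_3)\,\M(R_4),
\]
with $\alpha,\beta$ explicit and every $R_j$ of strictly smaller complexity.

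It then remains to check that the right-hand side of \eqref{quarterformula4} obeys the same recurrence. Substituting the conjectured product into each of the six tiling generating functions, the common factorial denominators $\prod_i[2i-1]_{q^2}!$ and the powers of $2$ cancel after a short computation, as do the $q$-power prefactors --- this is precisely where the exact shape of the exponent $\sum_{i=1}^{n}((4i-2)s_i-2i^2-i+1)$ and of the factor $\prod_i\frac{1}{q^{4s_i-2}+1}=\prod_i\frac{[2s_i-1]_{q^2}}{[4s_i-2]_{q^2}}$ is pinned down --- and the identity collapses to a routine three-term relation among products of the brackets $[2(s_i+s_j-1)]_{q^2}$, $[2(s_j-s_i)]_{q^2}$, and $[2s_i-1]_{q^2}$.

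I expect the main obstacle to be the combinatorial bookkeeping of the inductive step rather than this closing algebra. The delicate points are: (i) choosing the four condensation triangles so that every auxiliary region genuinely lies in the $R^{4}$- (or $R^{3}$-) family --- this is subtler than for the $R^{2}$-type regions because $R^{4}$ is drawn in the shifted coordinate system, in which the columns of vertical lozenges sit at \emph{even} $i$-coordinates and the central column carries the irregular weight $\tfrac12$, so the parities and the placement of the $j$-axis relative to the dents must still line up after removal; and (ii) evaluating the weight of each forced block exactly, since any error there corrupts $\alpha$ and $\beta$ and destroys the verification. Should the base case $d=0$ turn out not to be covered verbatim by a previously established formula, the fallback is a second short induction (for instance on $x$) or a direct reduction of $R^{4}_{x}(1,\dots,n)$ to an $R^{2}$- or $R^{3}$-type region by forced lozenges.
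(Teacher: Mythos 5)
Your high-level strategy --- set up a Kuo-condensation recurrence among quartered hexagons, induct, and then verify that the product formula satisfies the recurrence --- is indeed the paper's strategy, and your description of the closing algebra (cancellation of the powers of $2$ and $q$ and of the factorials, reduction to a two/three-term $q$-bracket identity) matches what actually happens. But two of your concrete choices create genuine gaps. First, the base case. The paper's induction on $x+n+l$ (where $n-l+1$ is the size of the maximal cluster of dents attached to the \emph{lower-right} corner) bottoms out at $l=1$, i.e.\ $s_i=x+i$: there the forced lozenges reduce $R^{4}_{x}$ to the weighted halved hexagon $P'_{x,n-1}$ of Lemma \ref{proctorlem2}, and that lemma is itself a nontrivial input requiring its own separate Kuo-condensation induction (with a recurrence whose coefficient is the genuine lozenge weight $\tfrac{q^{2x+n}+q^{-(2x+n)}}{2}$). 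Your induction on $d=\sum(s_i-i)$ runs in the opposite direction and bottoms out at $s_i=i$ (dents packed against the west zigzag), a configuration that is not covered by any previously established formula and does not visibly collapse by forcing; your fallback of reducing $R^{4}_{x}(1,\dots,n)$ to an $R^{2}$-type region cannot work, since $R^{2}$ and $R^{4}$ have identical shapes and differ only in the weight assignment, so removing forced lozenges can never convert one into the other. Without an analogue of Lemma \ref{proctorlem2} your induction has no floor.

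Second, the condensation setup. Placing one of the four distinguished triangles on the west zigzag side would force lozenges that destroy the zigzag boundary, so the resulting minors would leave the $R^{3}/R^{4}$ families; and a recurrence in which all four minors keep $n$ fixed while a single $s_i$ drops by $1$ is not what one application of Lemma \ref{Kuolem1} can produce, since the four pair-removals $\{u,v\},\{w,s\},\{u,s\},\{v,w\}$ share vertices. What the paper actually does is fill in the leftmost dent $s_1$, place $u$ at the base position $\alpha=s_l-1$ and $v$ at $s_1$, and place $w,s$ at the \emph{upper-right} corner; the five minors are then $R^{4}$-type regions with one fewer dent and with $x$ shifted by $\pm1$, the dent multiset modified by exchanging $s_1$ for $\alpha$. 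Moreover, the forced-lozenge weights cancel completely in this configuration, so the recurrence has coefficients equal to $1$ rather than the nontrivial monomials $\alpha\, q^{e}$ you anticipate; the delicate bookkeeping you worry about in (ii) is real, but it is resolved by this cancellation rather than by computing the monomials. As written, your proposal would need the auxiliary halved-hexagon lemma and a corrected choice of condensation vertices before the induction could close.
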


\begin{rmk}[Combinatorial reciprocity phenomenon]\label{reciprocity}
From the four formulas (\ref{quarterformula1})--(\ref{quarterformula4}) above, we realize that the  TGF of $R^{3}_{x}(s_1,s_2,\dots,s_n)$ is obtained from the TGF of $R^{1}_{x}(s_1,s_2,\dots,s_n)$ by replacing  $s_i$ by $s_i-1/2$, for $i=1,2,\dots,n$ and similarly, the TGF of $R^{4}_{x}(s_1,s_2,\dots,s_n)$ is obtained from the TGF of $R^{2}_{x}(s_1,s_2,\dots,s_n)$ by making the same replacement. These observations remind us of the ``\emph{combinatorial reciprocity phenomenon}": even though the region $R^{1}_{x}(s_1,s_2,\dots,s_n)$ (resp. $R^{2}_{x}(s_1,s_2,\dots,s_n)$) is \emph{not} defined when the $s_i$'s are half-integers, its tiling formula gives the number of combinatorial objects of a different sort (here, the tilings of $R^{3}_{x}(s_1,s_2,\dots,s_n)$ (resp. $R^{4}_{x}(s_1,s_2,\dots,s_n)$) when evaluated at half-integers. We refer the reader to, e.g., \cite{Beck,StanleyRecip,ProppRecip} for more discussions about the phenomenon. It would be very interesting to have a direct explanation for this which does not require calculating the TGFs.
\end{rmk}

\begin{rmk}
As shown in \cite{AF}, symplectic and orthogonal characters give certain weighted tiling enumerations of  the quartered hexagons. In particular, by evaluating the principal specialization of the classical group characters  one could obtain nice $q$-enumerations of the quartered hexagon. However, these $q$-enumerations are different from those in our Theorems \ref{quarterformula1}--\ref{quarterformula4}, as the weight assignments for the lozenges in the two cases are different. There is no obvious way to go from one to the other. In other words, the work in \cite{AF} and the work in this paper are incomparable.
\end{rmk}

\begin{rmk}
Strictly speaking, our tiling formulas in Theorems  \ref{quarterformula1}--\ref{quarterformula4} are written in terms of $q^2$-integers. If we replace our lozenge weight $\wt_1$ by the weight $\wt'(i,j)=\frac{q^{i/2}+q^{-i/2}}{2}$, we get formulas in terms of $q$-integers. However, we decided to keep the original weight to make our intermediate calculations ``clean." In particular, we do not need to deal with half-integer exponents of $q$.
\end{rmk}

\begin{figure}\centering
\setlength{\unitlength}{3947sp}
\begingroup\makeatletter\ifx\SetFigFont\undefined%
\gdef\SetFigFont#1#2#3#4#5{%
  \reset@font\fontsize{#1}{#2pt}%
  \fontfamily{#3}\fontseries{#4}\fontshape{#5}%
  \selectfont}%
\fi\endgroup%
\resizebox{!}{14cm}{
\begin{picture}(0,0)%
\includegraphics{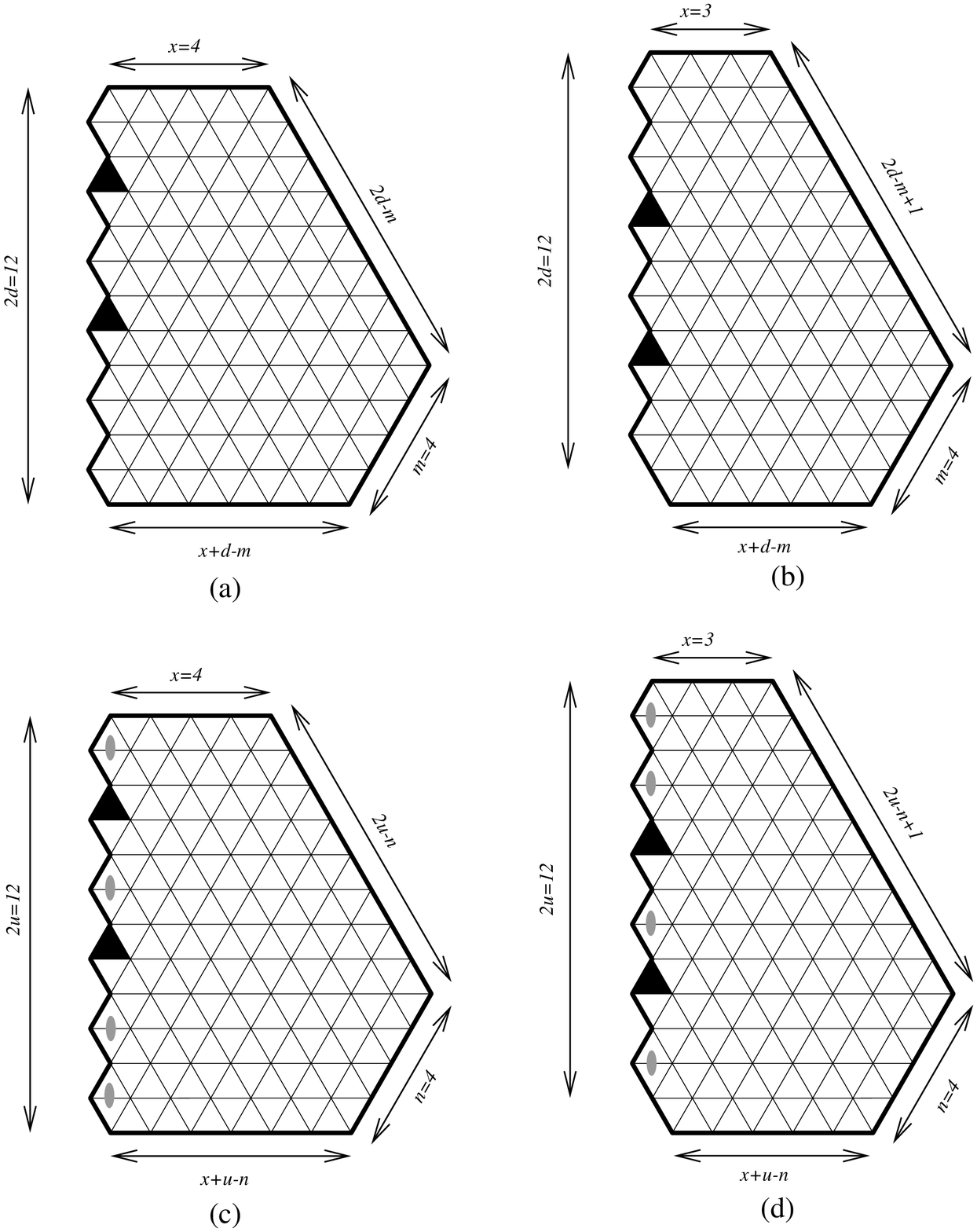}%
\end{picture}%

\begin{picture}(9982,12604)(903,-11782)
\put(1433,-4004){\makebox(0,0)[lb]{\smash{{\SetFigFont{12}{14.4}{\rmdefault}{\mddefault}{\itdefault}{\color[rgb]{0,0,0}$l_1$}%
}}}}
\put(1433,-3295){\makebox(0,0)[lb]{\smash{{\SetFigFont{12}{14.4}{\rmdefault}{\mddefault}{\itdefault}{\color[rgb]{0,0,0}$l_2$}%
}}}}
\put(1433,-1878){\makebox(0,0)[lb]{\smash{{\SetFigFont{12}{14.4}{\rmdefault}{\mddefault}{\itdefault}{\color[rgb]{0,0,0}$l_3$}%
}}}}
\put(1433,-460){\makebox(0,0)[lb]{\smash{{\SetFigFont{12}{14.4}{\rmdefault}{\mddefault}{\itdefault}{\color[rgb]{0,0,0}$l_4$}%
}}}}
\put(6946,-129){\makebox(0,0)[lb]{\smash{{\SetFigFont{12}{14.4}{\rmdefault}{\mddefault}{\itdefault}{\color[rgb]{0,0,0}$l_4$}%
}}}}
\put(6950,-819){\makebox(0,0)[lb]{\smash{{\SetFigFont{12}{14.4}{\rmdefault}{\mddefault}{\itdefault}{\color[rgb]{0,0,0}$l_3$}%
}}}}
\put(6943,-2229){\makebox(0,0)[lb]{\smash{{\SetFigFont{12}{14.4}{\rmdefault}{\mddefault}{\itdefault}{\color[rgb]{0,0,0}$l_2$}%
}}}}
\put(6950,-3647){\makebox(0,0)[lb]{\smash{{\SetFigFont{12}{14.4}{\rmdefault}{\mddefault}{\itdefault}{\color[rgb]{0,0,0}$l_1$}%
}}}}
\put(1453,-10409){\makebox(0,0)[lb]{\smash{{\SetFigFont{12}{14.4}{\rmdefault}{\mddefault}{\itdefault}{\color[rgb]{0,0,0}$h_1$}%
}}}}
\put(1453,-9700){\makebox(0,0)[lb]{\smash{{\SetFigFont{12}{14.4}{\rmdefault}{\mddefault}{\itdefault}{\color[rgb]{0,0,0}$h_2$}%
}}}}
\put(1453,-8283){\makebox(0,0)[lb]{\smash{{\SetFigFont{12}{14.4}{\rmdefault}{\mddefault}{\itdefault}{\color[rgb]{0,0,0}$h_3$}%
}}}}
\put(1453,-6865){\makebox(0,0)[lb]{\smash{{\SetFigFont{12}{14.4}{\rmdefault}{\mddefault}{\itdefault}{\color[rgb]{0,0,0}$h_4$}%
}}}}
\put(6966,-6534){\makebox(0,0)[lb]{\smash{{\SetFigFont{12}{14.4}{\rmdefault}{\mddefault}{\itdefault}{\color[rgb]{0,0,0}$h_4$}%
}}}}
\put(6970,-7224){\makebox(0,0)[lb]{\smash{{\SetFigFont{12}{14.4}{\rmdefault}{\mddefault}{\itdefault}{\color[rgb]{0,0,0}$h_3$}%
}}}}
\put(6963,-8634){\makebox(0,0)[lb]{\smash{{\SetFigFont{12}{14.4}{\rmdefault}{\mddefault}{\itdefault}{\color[rgb]{0,0,0}$h_2$}%
}}}}
\put(6970,-10052){\makebox(0,0)[lb]{\smash{{\SetFigFont{12}{14.4}{\rmdefault}{\mddefault}{\itdefault}{\color[rgb]{0,0,0}$h_1$}%
}}}}
\end{picture}}
\caption{Four regions with holes on the vertical side. The lozenges with shaded cores have weight $1/2$.}\label{Fig:Onesidehole}
\end{figure}

\begin{figure}\centering
\includegraphics[width=13cm]{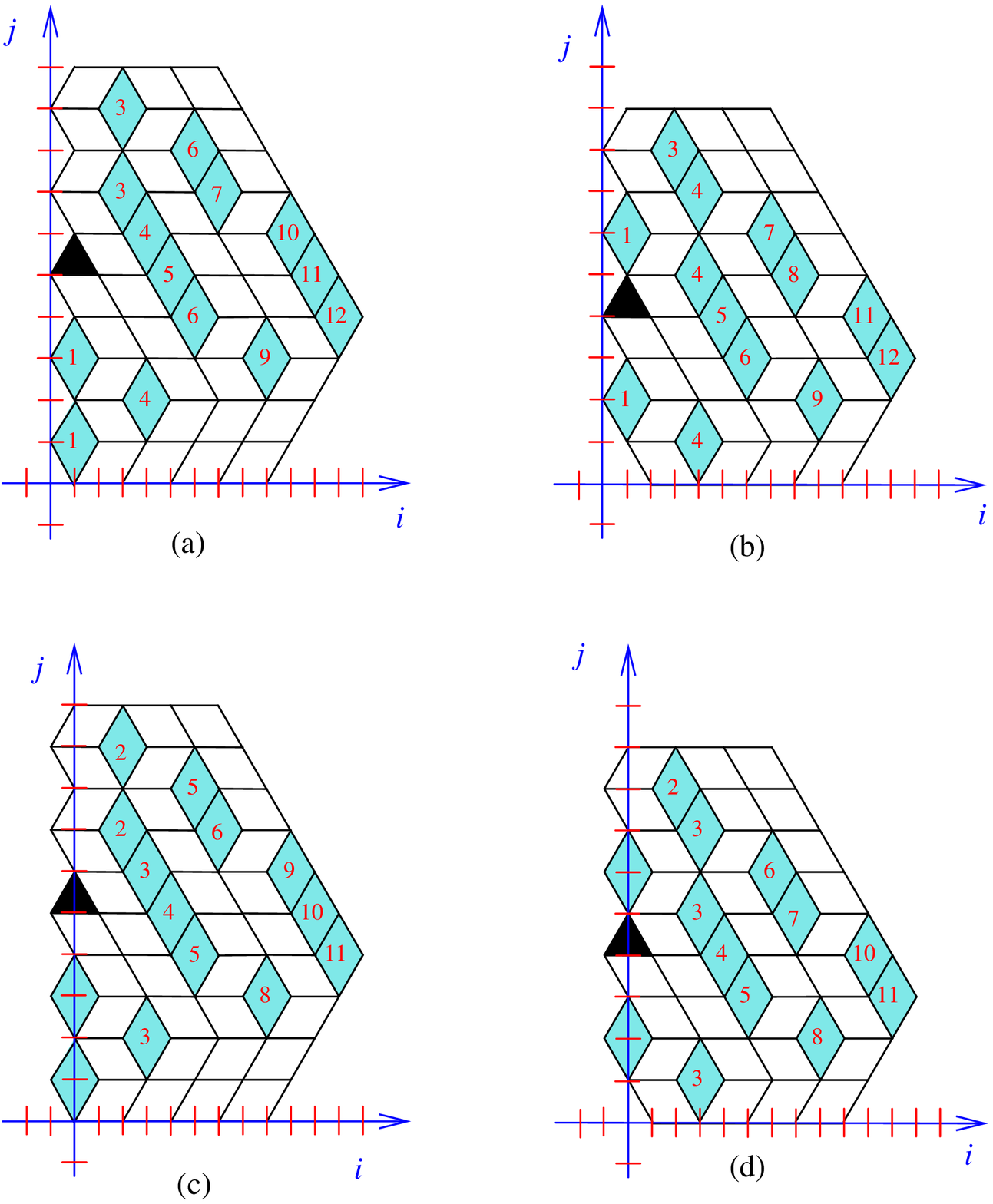}
\caption{Weight assignments for the four families of halved hexagons.}\label{Fig:OnesideholeTile}
\end{figure}

\subsection{Halved hexagons with dents on vertical side}
We consider next four families of halved hexagons with dents on the vertical side. 

The first family is illustrated in Figure  \ref{Fig:Onesidehole}(a). We start with a halved hexagon whose north, northeast, southeast, and south sides have side-lengths  $x,2d-m,m,x+d-m$ ($m\leq d$). The west side runs along a vertical zigzag with $2d$ steps. Along the west side, we remove $d-m$ up-pointing unit triangles (the black ones).  We label  the `bumps' on the west side $1,2,\dots,d$ (from bottom to the top). Assume that the positions of the bumps which do not contain a removed unit triangle are $1\leq l_1<l_2\cdots<l_m\leq d$. We now assign weights to the lozenges of the region using  the weight asignment $\wt_1$, as shown by example in Figure \ref{Fig:OnesideholeTile}(a). Denote by $A_{x,d}(l_1,l_2,\dots,l_m)$ the resulting weighted region.

We next consider three more variations of $A_{x,d}(l_1,\dots,l_m)$ as follows. The second family of weighted regions uses the same weight assignment $\wt_1$ as the $A$-type regions; the differences are the length of the northeast side (which now has side-length $2d-m+1$) and the length of the bottom-most step of the zigzag on the west side (which is now $2$). Denote by $B_{x,d}(l_1,\dots,l_m)$ the resulting region. See Figure \ref{Fig:Onesidehole}(b) for the shape of the $B$-type region and Figure \ref{Fig:OnesideholeTile}(b) for the weight assignment of the region.

The third family of regions has exactly same shape as the $A$-type region, however we now  use the weight asignment $\wt_2$. Recall that the vertical lozenges intersected by the $j$-axis now have weight $1/2$. We denote by $C_{x,u}(h_1,\dots,h_n)$ these regions (shown in Figures \ref{Fig:Onesidehole}(c) and \ref{Fig:OnesideholeTile}(c)).  Similarly, the regions in the fourth family are identical in shape to the $B$-type regions in; the only difference is that we now apply weight assignment $\wt_2$ to their lozenges.  These regions are denoted by $D_{x,u}(h_1,\dots,h_n)$ (illustrated in \ref{Fig:Onesidehole}(d) and \ref{Fig:OnesideholeTile}(d)).

The TGFs of all four regions above are given by simple product formulas.

We adopt the following notation for $0\leq a\leq b$:
\begin{equation}
\label{sq}
\left\langle\begin{matrix}
b\\ a\end{matrix}\right\rangle:=a+(a+1)+(a+2)+\dotsc+b.
\end{equation}

We define two polynomials as follows:
\begin{align}\label{Pdefine}
P(x,u,d,(l_i)_{i=1}^{m}, (h_j)_{j=1}^{n})&=2^{-E}q^{-F}\frac{\prod_{1\leq i < j \leq m}[2(l_j-l_i)]_{q^2}\prod_{1\leq i < j \leq n}[2(h_j-h_i)]_{q^2}}{\prod_{i=1}^{m}\prod_{j=1}^{n}[2(l_i+h_j)]_{q^2}\prod_{i=1}^{m}[2l_i-1]_{q^2}!\prod_{j=1}^{n}[2h_j]_{q^2}!}\notag\\
&\times \prod_{i=1}^{\lceil m/2\rceil}\prod_{j=1}^{2m-4i+3}[2\overline{x}+2i+j-1]_{q^2} \notag\\
&\times \prod_{i=1}^{n}[2(\overline{x}+m+i)]_{q^2} \prod_{i=1}^{n}\prod_{j=1}^{m+i}[2\overline{x}+m+i+j]_{q^2}\notag\\
&\times\prod_{i=1}^{m}\prod_{j=1}^n\frac{[2(\overline{x}+i+j-1)]_{q^2}}{[2(\overline{x}+i+j-1)+1]_{q^2}}\notag\\
&\times \prod_{i=1}^{m}\prod_{j=1}^{l_i-i}[2(\overline{x}+i+j+n)]_{q^2}[2(\overline{x}-i-j+m+1)]_{q^2}\notag\\
&\times \prod_{i=1}^{n}\prod_{j=1}^{h_i-i}[2(\overline{x}+i+j+m)]_{q^2}[2(\overline{x}-i-j+n+1)]_{q^2},
\end{align}
where $\overline{x}=x+u-n+d-m-e=x+\max(u-n,d-m)$ and $E$ and $F$ are defined below.

\begin{align}
Q(x,u,d,(l_i)_{i=1}^{m}, (h_j)_{j=1}^{n})&=2^{-E'}q^{-F'}\frac{\prod_{1\leq i < j \leq m}[2(l_j-l_i)]_{q^2}\prod_{1\leq i < j \leq n}[2(h_j-h_i)]_{q^2}}{\prod_{i=1}^{m}\prod_{j=1}^{n}[2(l_i+h_j)]_{q^2}\prod_{i=1}^{m}[2l_i]_{q^2}!\prod_{j=1}^{n}[2h_j-1]_{q^2}!}\notag\\
&\times \prod_{i=1}^{\lceil n/2\rceil}\prod_{j=1}^{2n-4i+3}[2\tilde{x}+2i+j]_{q^2}\notag\\
&\times \prod_{i=1}^{m}[2(\tilde{x}+n+i)]_{q^2} [2(\tilde{x}+n+i+1)]_{q^2}\prod_{i=1}^{m}\prod_{j=1}^{n+i-1}[2\tilde{x}+n+i+j+1]_{q^2}\notag\\
&\times\prod_{i=1}^{n}\prod_{j=1}^m\frac{[2(\tilde{x}+i+j-1)]_{q^2}}{[2(\tilde{x}+i+j-1)+1]_{q^2}}\notag\\
&\times \prod_{i=1}^{m}\prod_{j=1}^{l_i-i}[2(\tilde{x}+i+j+n+1)]_{q^2}[2(\tilde{x}-i-j+m+1)]_{q^2}\notag\\
&\times \prod_{i=1}^{n}\prod_{j=1}^{h_i-i}[2(\tilde{x}+i+j+m)]_{q^2}[2(\tilde{x}-i-j+n+2)]_{q^2}.
\end{align}
where $\tilde{x}=x+u-n+d-m-e'=x+\max(u-n,d-m+1)-1$ and $E'$ and $F'$ are defined below.

We define the exponents $E,F,E',F'$ as follows:

\begin{align}\label{Eexp1}
E=E(x,u,d,\textbf{l},\textbf{h})&=f(t_1+2d-m,n,m+1)\notag\\
&+\sum_{i=1}^{m}\left\langle\begin{matrix}
t_1+2d-m-(m-i+1)\\ t_1+2(d-l_i)-2(m-i) \end{matrix}\right\rangle \notag\\
&+\sum_{i=1}^{n}\left\langle\begin{matrix}
t_2+2u-n-(n-i+1) \\t_2+2(u-h_i)-2(n-i)\end{matrix}\right\rangle,
\end{align}

\begin{align}\label{Fexp1}
F=F(\textbf{l},\textbf{h})=n(m+1)+\sum_{i=1}^{m}(2l_i-i)+\sum_{i=1}^{n}(2h_i-i),
\end{align}

\begin{align}\label{Eexp2}
E'=E'(x,u,d,\textbf{l},\textbf{h})&=f(t'_1+2d-m,n+1,m)\notag\\
&+\sum_{i=1}^{m}\left\langle\begin{matrix}
 t'_1+2d-m-(m-i+1)\\ t'_1+2(d-l_i)-2(m-i)\end{matrix}\right\rangle \notag\\
&+\sum_{i=1}^{n}\left\langle\begin{matrix}
t'_2+2u-n-(n-i+1)\\ t'_2+2(u-h_i)-2(n-i)\end{matrix}\right\rangle,
\end{align}

\begin{align}\label{Fexp2}
F'=F'(\textbf{l},\textbf{h})=(n+1)m+\sum_{i=1}^{m}(2l_i-i)+\sum_{i=1}^{n}(2h_i-i),
\end{align}
where $f(t,x,y)=\frac{xy(2t+x-y)}{2}$,  $t_1=2(x+u-n-e)+1$, $t_2=2(x+d-m-e)$, $t'_1=2(x+u-n-e')+1$, $t'_2=2(x+d-m+1-e')$, and $l_0=h_0=0$ by convention.

\begin{thm}[Formula for a region of type A]\label{typeAthm} For non-negative integers $x,d,m$ ($d\geq m$) and a sequence $(l_i)_{i=1}^{m}$ of positive integers between $1$ and $d$, we have
\begin{equation}\label{TypeAformula}
\M(A_{x,d}(l_1,l_2,\dots,l_m))=P(x,0,d,(l_i)_{i=1}^{m},\emptyset).
\end{equation}
\end{thm}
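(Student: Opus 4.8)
The plan is to prove Theorem \ref{typeAthm} by induction on the parameters using Kuo condensation, following the standard strategy for such tiling product formulas. The base cases will be the situations where $m=0$ (so the region is a plain halved hexagon with all bumps removed, or a degenerate region) and small values of $d$; in these cases $\M(A_{x,d}(\emptyset))$ should reduce to a known $q$-enumeration of a halved hexagon (or a quartered-hexagon-type formula established in Theorems \ref{quartthm1}--\ref{quartthm4}, which we are free to cite), and one checks directly that $P(x,0,d,\emptyset,\emptyset)$ agrees with it. It will be convenient to first record, as a preliminary lemma, that the right-hand side $P(x,0,d,(l_i),\emptyset)$ satisfies the same Kuo-type recurrence that we will derive for the left-hand side; since $P$ is an explicit product of $q^2$-integers, verifying the recurrence for $P$ is a (tedious but) purely algebraic computation of ratios of products, which I will not grind through here.

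The core of the argument is to apply one of the two versions of Kuo condensation stated in Section \ref{Sec:Pre} to the region $A_{x,d}(l_1,\dots,l_m)$. I would choose four vertices on the boundary — two on the vertical (west) zigzag side near a bump, and two on the base or the northeast side — so that the four regions appearing in the Kuo identity are again halved hexagons of type $A$ (or their obvious variants) but with shifted parameters: typically one term removes/adds a bump, changing $m$ and one of the $l_i$, another changes $d$ or $x$, and the "mixed" terms combine these shifts. Concretely, removing a forced strip of lozenges after the condensation cut should identify each of the five regions in the Kuo identity with some $A_{x',d'}(\dots)$ together with possibly a region of a companion type (for which one may need an auxiliary formula proved in parallel). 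Matching the five regions to instances of $P$ reduces the theorem to the algebraic identity among $P$-values checked in the preliminary lemma. The bookkeeping of the exponents $E$ and $F$ — tracking how the sums $\langle{\,}^{\,b}_{\,a}\rangle$, the function $f(t,x,y)$, and the terms $\sum(2l_i-i)$ transform under each parameter shift — is the part that must be done with care; the brackets $[\cdot]_{q^2}$ in the product part largely telescope, but the powers of $q$ and of $2$ need to balance exactly, and getting the normalization $\overline{x}=x+\max(u-n,d-m)$ to behave correctly across the max is delicate.

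I expect the main obstacle to be twofold. First, there is a structural issue: a single Kuo condensation on a type-$A$ region will generally produce terms that are not of type $A$ but of a neighboring type ($B$, or a halved hexagon with a dent shifted onto a different kind of step), so the induction has to be set up as a simultaneous induction over all the relevant families at once — effectively proving Theorems \ref{typeAthm} and its siblings (and perhaps the $B/C/D$ analogues) together — and one must make sure the web of recurrences actually closes and that the induction parameter (say $m$, or $d+\sum l_i$, or $x$) strictly decreases in every branch. Choosing the right condensation vertices so that the recurrence is "triangular" in this sense is the crux. Second, even granting the right combinatorial setup, verifying that $P$ (and the companion polynomials) satisfy the resulting recurrence is an intricate manipulation of $q$-shifted factorials, and isolating a clean form of the recurrence — ideally one that factors into a product over $i$ or over pairs $i<j$ of elementary $q$-identities — is what makes the computation tractable rather than hopeless.
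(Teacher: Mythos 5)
Your overall strategy is the one the paper actually uses: the theorem is proved by induction via Kuo condensation, and your key structural prediction --- that a single condensation on a type-$A$ region produces regions of a neighboring type, forcing a simultaneous induction --- is exactly right. The paper gives a combined proof of Theorems \ref{typeAthm} and \ref{typeBthm}, inducting on $h+x$ where $h$ is the height of the region, with the recurrence for an $A$-region staying inside the $A$-family when $l_1=1$ but mixing in $B$-regions when $l_1>1$, and the $B$-recurrence mixing back in $A$-regions; the web of recurrences closes over exactly these two families.

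That said, as a proof the proposal has real gaps, all concentrated in the parts you defer. First, your guess for the placement of the four Kuo vertices (two on the west zigzag near a bump, two on the base or northeast side) is not what works: the paper places the $u$- and $v$-triangles at the upper-right corner and the $w$- and $s$-triangles at the lower-right corner --- all four on the east side, away from the dents --- and it is precisely this placement that makes the forced lozenges propagate so that each of the five Kuo regions is again of type $A$ or $B$ with $d$, $x$, $m$, and the $l_i$ shifted by at most one. Second, before condensation one must normalize the region: reduce to $x>0$ and to $d=l_m$ by stripping forced lozenges (otherwise the five regions do not all have strictly smaller $h+x$), and then split on $l_1=1$ versus $l_1>1$; without these reductions the induction does not close. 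Third, the ``preliminary lemma'' that $P$ satisfies the recurrence is not a single identity but three separate ones (one for each of the recurrences \eqref{Arecurrence2}, \eqref{Arecurrence3}, \eqref{Arecurrence4}), each of which must be reduced to an elementary two-term $q$-identity after massive cancellation; asserting that this ``should'' work is not the same as exhibiting the cancellation, and in the paper the ratio of the four $P$-values collapses to a quotient of a handful of brackets $[\,\cdot\,]_{q^2}$ whose two-term sum is checked directly. Until those three pieces are supplied, the argument is a correct plan rather than a proof.
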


\begin{thm}[Formula for a region of type B]\label{typeBthm}
For non-negative integers $x,d,m$ ($d\geq m$) and a sequence $(l_i)_{i=1}^{m}$ of positive integers between $1$ and $d$, we have
\begin{equation}\label{TypeBformula}
\M(B_{x,d}(l_1,l_2,\dots,l_m))=Q(x,0,d,(l_i)_{i=1}^{m},\emptyset).
\end{equation}
\end{thm}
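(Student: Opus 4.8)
The plan is to prove Theorems~\ref{typeAthm} and \ref{typeBthm} simultaneously by induction, using Kuo condensation (which, per the excerpt, will be recalled in Section~\ref{Sec:Pre}) as the engine. First I would set up the induction: the statements involve the parameters $x$, $d$, $m$ and the sequence $(l_i)$, and the natural quantity to induct on is some linear combination such as $x+d$ (or $2d-m+x$), with the hole-positions $(l_i)$ handled inside each induction step. The base cases will be the degenerate situations — $m=0$ (no holes retained, so the region is essentially a plain halved hexagon, whose weighted tiling count should follow from a known $q$-product, e.g.\ a symmetric-weight MacMahon-type evaluation or Proctor's formula adapted to the weight $\wt_1$), $m=d$ (no holes removed at all), and $x=0$. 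In these cases one checks that $P(x,0,d,(l_i),\emptyset)$ and $Q(x,0,d,(l_i),\emptyset)$ reduce to the right thing; this is a finite computation with the $q^2$-integer products and the exponents $E,F$ defined in \eqref{Eexp1}--\eqref{Fexp2}.

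Next I would carry out the inductive step. The idea is to apply Kuo condensation to the region $A_{x,d}(l_1,\dots,l_m)$ (resp. $B_{x,d}(l_1,\dots,l_m)$) by choosing four boundary unit triangles — typically two near the top and two near the bottom of the vertical zigzag side, or two on the slanted sides — so that the five regions appearing in the condensation identity $\M(R)\M(R')=\M(R_1)\M(R_2)+\M(R_3)\M(R_4)$ are again regions of type $A$, $B$, $C$, or $D$ with smaller parameters, or with a shifted/extended hole sequence. This is exactly why all four families $A,B,C,D$ (and the two ``master'' polynomials $P,Q$ carrying the extra arguments $u$ and $(h_j)$) are introduced together: the condensation recurrence for one type will produce the others, so the induction must be run on the whole package at once. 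Concretely, I expect the recurrence to relate $A_{x,d}(\dots)$ to $B$-type regions with one fewer column and to $A$-type regions with a modified hole set, and symmetrically for $B$; the $C$ and $D$ regions enter because removing a vertical lozenge near the axis converts the $\wt_1$-weight into the $\wt_2$-weight of a smaller region. I would also need the elementary reduction lemmas from Section~\ref{Sec:Pre} (forced lozenges along the zigzag boundary, and the effect of peeling off a layer) to identify the condensation pieces with honest members of the four families.

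The remaining — and most laborious — task is the algebraic verification: once the recurrence is established, one must check that the product formulas $P$ and $Q$ satisfy the same recurrence. This reduces to an identity among the $q^2$-integer products, the ``staircase sum'' exponents $\langle\begin{smallmatrix}b\\a\end{smallmatrix}\rangle$, and the quadratic exponents $F,F'$ and $E,E'$ (which contain the bilinear form $f(t,x,y)=\tfrac{xy(2t+x-y)}{2}$). The factors depending on $(l_i)$ and $(h_j)$ — namely $\prod_{i<j}[2(l_j-l_i)]_{q^2}$, $\prod_i\prod_{j=1}^{l_i-i}[\dots]$, etc.\ — transform in a controlled, telescoping way when a single $l_i$ is incremented or a new hole index is adjoined, and the bulk MacMahon-type products change by a single rational factor; matching these against the power-of-$2$ prefactors $2^{-E}$ and the power-of-$q$ prefactors $q^{-F}$ is where all the bookkeeping lives. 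I expect this normalization check — getting the exponents $E,F,E',F'$ to balance exactly across the condensation identity, including the $\overline{x}$ versus $\tilde{x}$ and the $e,e'$ max-function shifts — to be the main obstacle, since a single off-by-one in $f$ or in a $\langle\cdot\rangle$-sum propagates everywhere; careful choice of the four Kuo triangles (so that three of the five pieces are ``clean'' and only the fourth carries the new hole) will keep it manageable.
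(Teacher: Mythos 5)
Your overall strategy is the paper's: a combined induction on the $A$- and $B$-type regions, with Kuo condensation (Lemma~\ref{Kuolem1}) generating a recurrence whose pieces are again regions of these types, followed by an algebraic verification that the product formulas satisfy the same recurrence. The induction statistic in the paper is $h+x$ where $h$ is the height ($2d$ for type $A$, $2d+1$ for type $B$), which matches your proposed $x+2d$ up to normalization, and the base cases ($m=0$, $h\in\{0,1\}$, plus the reduction of $x=0$ by forced lozenges) are as you describe.

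However, one piece of your plan is genuinely wrong and another is underspecified. First, the $C$- and $D$-type regions never enter the proof of Theorems~\ref{typeAthm} and~\ref{typeBthm}: the weight $\wt_2$ plays no role here, and there is no mechanism by which ``removing a vertical lozenge near the axis converts the $\wt_1$-weight into the $\wt_2$-weight.'' The coupling is purely between types $A$ and $B$, and it arises from shape, not weight: after the preliminary reductions (one may assume $x>0$ and $d=l_m$), the paper splits on whether $l_1=1$ or $l_1>1$. When $l_1=1$ the condensation identity for $A_{x,d}$ stays entirely within type $A$; when $l_1>1$ the forced lozenges created at the bottom of the west zigzag lengthen the bottom-most step to $2$, which is exactly the defining feature of a $B$-type region, so three of the five condensation pieces become type $B$. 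A third recurrence, applied to $B_{x,d}$ itself, closes the system. Second, the four Kuo vertices are not placed on the zigzag or slanted sides as you suggest, but as two pairs of unit triangles at the upper-right and lower-right corners of the (east) lattice-line boundary; this placement is what makes the forced-lozenge weights cancel except for the single rightmost vertical lozenge $l_0$, whose weight $\frac{q^{2x+2d-m+1}+q^{-(2x+2d-m+1)}}{2}$ (in the $B$ case) is the only nontrivial coefficient in the recurrence. Without the $l_1=1$ versus $l_1>1$ dichotomy and this vertex placement, you cannot identify the condensation pieces as honest members of the two families, which is the substantive content of the inductive step.
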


\begin{thm}[Formula for a region of type C]\label{typeCthm}For non-negative integers $x,u,n$ ($u\geq n$) and a sequence $(h_j)_{j=1}^{n}$ of positive integers between $1$ and $u$, we have
\begin{equation}\label{TypeCformula}
\M(C_{x,u}(h_1,h_2,\dots,h_n))=Q(x,u,0,\emptyset,(h_i)_{i=1}^{n}).
\end{equation}
\end{thm}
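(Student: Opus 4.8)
The plan is to obtain the formula for the type $C$ region as a specialization/limiting case of the type $B$ formula (Theorem \ref{typeBthm}), rather than re-running the whole induction from scratch. The key structural observation is that $C_{x,u}(h_1,\dots,h_n)$ has the \emph{same shape} as the $B$-type region with the roles of the two ``arms'' of the halved hexagon interchanged, and with the weight $\wt_2$ replacing $\wt_1$; correspondingly, the polynomial $Q(x,u,0,\emptyset,(h_i)_{i=1}^n)$ is literally the polynomial $Q$ with the $\mathbf{l}$-data empty and the $\mathbf{h}$-data active (note that in the definitions of $Q$, $E'$, $F'$, and $\tilde{x}$, setting $m=0$ collapses several products, and $e'$ is chosen precisely so that $\tilde{x}=x+u-n-e'$, giving $t'_2=2(x+1-e')$). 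So the first step is to unwind all of these definitions when $m=0$ and $d=0$, and check that the right-hand side of (\ref{TypeCformula}) is a well-defined polynomial in $q^{\pm 1}$ with the predicted number of factors.

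The second step is the combinatorial heart: I would set up a Kuo-condensation recurrence for $\M(C_{x,u}(h_1,\dots,h_n))$ exactly as in the proof of Theorem \ref{typeBthm}, using one of the two versions of Kuo condensation stated in Section \ref{Sec:Pre}. Concretely, one picks the four boundary unit triangles so that the four sub-regions appearing in the Kuo identity are again type-$C$ regions (or degenerate ones, with $\M=1$ by convention) but with either $u$ decreased, or one of the $h_i$'s shifted, or the list $(h_i)$ shortened. The $\wt_2$-weighting — the factor $1/2$ on the lozenges meeting the $j$-axis — must be tracked through the four sub-regions; this is where the $2^{-E'}$ prefactor and the extra factors like $[2h_j-1]_{q^2}!$ (versus $[2h_j]_{q^2}!$ in $P$) come from, and it is the part most prone to bookkeeping slips. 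I would also need the base cases: $n=0$ (no dents, so $C_{x,u}(\emptyset)$ is a weighted halved hexagon whose TGF should already be known, e.g.\ from Rosengren's or the first author's earlier work cited in the introduction, or computed directly via a Lindström–Gessel–Viennot / Cauchy-type determinant), and the fully-degenerate cases.

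The third step is then purely algebraic: verify that the product formula $Q(x,u,0,\emptyset,(h_i)_{i=1}^n)$ satisfies the \emph{same} recurrence with the \emph{same} base cases. This amounts to checking a rational-function identity among shifted $q^2$-factorials and the ``staircase sum'' exponents $E',F'$ defined in (\ref{Eexp2})–(\ref{Fexp2}); the $f(t,x,y)=\tfrac{xy(2t+x-y)}{2}$ term contributes a quadratic-in-the-parameters exponent whose increments under each elementary move must match the $2^{\bullet}$ and $q^{\bullet}$ changes coming from the geometry. I expect this verification to be the main obstacle — not conceptually, but because the four-term Kuo identity forces a four-way case analysis (depending on whether the condensation move kills a dent, merges two, or merely translates the configuration), and in each case one must confirm cancellation of the elaborate double products $\prod_{i}\prod_{j=1}^{h_i-i}[\cdots]_{q^2}$. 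A cleaner alternative, if the bookkeeping proves unwieldy, is to prove Theorem \ref{typeCthm} simultaneously with Theorems \ref{typeAthm} and \ref{typeBthm} by a single induction on $x+u+n$ in which the Kuo recurrences for the $A$, $B$, $C$ (and $D$) families are mutually referencing; I would organize the write-up that way so that the type-$C$ case reuses the algebraic lemmas established for the type-$B$ case verbatim.
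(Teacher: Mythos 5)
Your overall strategy --- Kuo condensation, induction, and an algebraic check that the product formula satisfies the resulting recurrence --- is the paper's strategy, and the ``cleaner alternative'' in your last paragraph is essentially what the paper does: Theorems \ref{typeCthm} and \ref{typeDthm} are proved by a combined induction mirroring, step for step, the combined proof of Theorems \ref{typeAthm} and \ref{typeBthm}. Two points in your write-up need repair, however. The opening framing --- that the type-$C$ formula is a ``specialization/limiting case'' of Theorem \ref{typeBthm} --- is not a valid derivation and should be dropped: $C_{x,u}(h_1,\dots,h_n)$ has the same \emph{shape} as an $A$-type region (not a $B$-type region with arms interchanged) but carries the weight $\wt_2$, and the fact that its generating function is the $Q$-polynomial evaluated at $(x,u,0,\emptyset,\mathbf{h})$ rather than at $(x,0,d,\mathbf{l},\emptyset)$ is a conclusion of the theorem, not something obtainable from Theorem \ref{typeBthm} by substituting parameters. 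You implicitly concede this by re-running the induction, but the first paragraph as written promises a shortcut that does not exist.

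More substantively, your step two asserts that the four sub-regions in the Kuo identity ``are again type-$C$ regions.'' In general they are not: exactly as the recurrences (\ref{Arecurrence3}) and (\ref{Arecurrence4}) for the $A$-family unavoidably involve $B$-type regions (deleting the Kuo triangles and clearing forced lozenges changes the bottom step of the west zigzag), the corresponding recurrences for the $C$-family produce $D$-type regions and vice versa; so the simultaneous $C$/$D$ induction you relegate to a fallback is not optional but is the only way the recurrence closes up. On the other hand, the bookkeeping you anticipate for the $1/2$-weighted lozenges on the $j$-axis largely evaporates: the Kuo triangles sit at the right-hand corners of the region, every forced lozenge lies away from the west side, and hence none of the reweighted lozenges is ever forced. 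Consequently the weight ratios $W_2W_3/W_1=\wt(l_0)$ and $W_4W_5/W_1=1$ are literally the same as in the $A$/$B$ proof --- this is the one observation the paper singles out, and it is what lets the entire algebraic verification carry over with only a change in which arguments of $P$ and $Q$ are active. Finally, your base case $n=0$ is a forced region with a unique tiling (all $u$ bumps carry dents), so no appeal to Rosengren's formula or to a determinant evaluation is needed there.
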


\begin{thm}[Formula for a region of type D]\label{typeDthm}For non-negative integers $x,u,n$ ($u\geq n$) and a sequence $(h_j)_{j=1}^{n}$ of positive integers between $1$ and $u$, we have
\begin{equation}\label{TypeDformula}
\M(D_{x,u}(h_1,h_2,\dots,h_n))=P(x,u,0,\emptyset,(h_i)_{i=1}^{n}).
\end{equation}
\end{thm}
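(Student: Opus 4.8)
\textbf{Proof proposal for Theorem \ref{typeDthm}.}

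The plan is to reduce the type~$D$ statement to the already-established type~$A$ formula (Theorem~\ref{typeAthm}), exploiting the fact that the polynomial $P$ is symmetric in the roles played by the $\mathbf{l}$-data and the $\mathbf{h}$-data up to the swap $(x,u,d,m,n)\mapsto(x,d,u,n,m)$ and a corresponding reflection of the region. First I would set up the geometric correspondence: the region $D_{x,u}(h_1,\dots,h_n)$ has the same shape as $B_{x,u}(h_1,\dots,h_n)$ but carries the weight $\wt_2$, i.e.\ the vertical lozenges crossed by the $j$-axis are halved. Reflecting $D_{x,u}(h_1,\dots,h_n)$ across a horizontal line turns it into a region of the same combinatorial type as some $A$-region sitting on the other side of the axis; because $\wt_1(i,j)=\wt_1(i,-j)$ the symmetric weight is preserved under this reflection, and the halving of the axis-lozenges in $\wt_2$ is exactly what is needed to make the two halves glue consistently. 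Thus I expect $\M(D_{x,u}(\emptyset,(h_i)))$ to coincide, after matching parameters, with $\M(A_{x',d'}((l_i),\emptyset))$ for the appropriate primed data, whence the right-hand side $P(x,u,0,\emptyset,(h_i)_{i=1}^n)$ will follow by plugging into \eqref{TypeAformula} and the definition \eqref{Pdefine} of $P$, checking that $\overline x$, $E$, and $F$ specialize correctly.

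Should that direct reduction prove too rigid (the half-weight on the central column may not split cleanly as a pure reflection), the fallback — and the approach I would actually carry out in detail — is Kuo condensation, mirroring the proof of Theorem~\ref{typeAthm}. Concretely: (i) show the formula holds in the base cases $n=0$ (a region with no dents, whose TGF is a product formula already available from the halved-hexagon literature cited in the introduction, or a degenerate $\M=1$ case) and for the smallest admissible $u$; (ii) apply one of the two versions of Kuo condensation from Section~\ref{Sec:Pre} with the four distinguished boundary unit triangles placed at the corners of the region so that each of the five resulting regions is again of type $D$ (or reduces to one by forced lozenges) but with one fewer dent or with shifted parameters $h_i$; (iii) verify that the product formula $P(x,u,0,\emptyset,(h_i))$ satisfies the same five-term recurrence. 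Step (iii) is where the bulk of the work lies: one must massage the ratio of two instances of $P$ into the quotient of $q^2$-integers dictated by the Kuo identity, tracking the powers of $2$ through $E$ and of $q$ through $F$, and using the telescoping identity \eqref{sq} for the bracket notation $\langle\begin{smallmatrix}b\\a\end{smallmatrix}\rangle$.

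The main obstacle I anticipate is precisely the bookkeeping of the prefactors $2^{-E}$ and $q^{-F}$ under condensation: the combinatorial recurrence produces weighted regions whose lozenge weights must be renormalized back to the standard $\wt_1$/$\wt_2$ normalization, and each renormalization contributes powers of $2$ and of $q$ that have to be reconciled with the closed forms \eqref{Eexp1}--\eqref{Fexp2}. A secondary subtlety is that for type $D$ the axis-crossing lozenges have weight $1/2$, so when condensation splits off a sub-region, some of those half-weights migrate to the boundary and one must argue (as in the type~$C$ case) that the resulting region is still exactly a $\wt_2$-weighted region of the stated shape, with no stray factors. Once the recurrence and base cases are in hand, uniqueness of the solution to the Kuo recursion (with the given boundary data) finishes the proof, and the symmetry between $P$ and $Q$ makes the computation for $D$ essentially parallel to — indeed lighter than — the one for $A$, since here the $\mathbf{l}$-sequence is empty.
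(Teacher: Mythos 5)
Your first approach fails at the outset: there is no reflection carrying $D_{x,u}(h_1,\dots,h_n)$ to an $A$-type region. The $D$-region has the shape of a $B$-region (not an $A$-region) and carries the weight $\wt_2$, under which the axis-crossing vertical lozenges have weight $1/2$; a horizontal reflection preserves both the shape type and the weight assignment, so it cannot convert $\wt_2$ into $\wt_1$. Correspondingly, the polynomial $P$ is \emph{not} symmetric under the swap $(d,m,\mathbf{l})\leftrightarrow(u,n,\mathbf{h})$ --- compare the factors $\prod_{i=1}^{m}[2l_i-1]_{q^2}!$ and $\prod_{j=1}^{n}[2h_j]_{q^2}!$ in \eqref{Pdefine} --- and this asymmetry is precisely the imprint of the two different weightings. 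That both $\M(A)$ and $\M(D)$ are specializations of the same $P$ does not mean the two regions have equal tiling generating functions for matched parameters.

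Your fallback (Kuo condensation plus induction) is the right strategy and is what the paper does, but as you describe it there is a genuine gap: the five regions produced by condensation on a $D$-region are \emph{not} all of type $D$. The paper proves Theorems \ref{typeAthm} and \ref{typeBthm} by a \emph{simultaneous} induction in which the recurrence for an $A$-region with $l_1>1$ involves $B$-regions (see \eqref{Arecurrence3}) and the recurrence for a $B$-region involves $A$-regions (see \eqref{Arecurrence4}); the removal of forced lozenges changes the bottom step of the west-side zigzag and thereby switches the shape type. Exactly the same coupling occurs for the $\wt_2$-weighted regions: condensation applied to $D_{x,u}(\mathbf{h})$ produces $C$-type regions, so an induction confined to type $D$ cannot close, and in your step (iii) one must verify that $P(x,u,0,\emptyset,\cdot)$ and $Q(x,u,0,\emptyset,\cdot)$ jointly satisfy the coupled system of recurrences, not that $P$ alone satisfies a single five-term recurrence. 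The paper's actual proof of Theorems \ref{typeCthm} and \ref{typeDthm} is to rerun the combined $A$/$B$ argument verbatim for the pair $(C,D)$, observing only that the forced lozenges involved are never the re-weighted vertical ones along the west side, so all the weight factors $W_i$ behave exactly as before.
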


\begin{figure}\centering
\setlength{\unitlength}{3947sp}%
\begingroup\makeatletter\ifx\SetFigFont\undefined%
\gdef\SetFigFont#1#2#3#4#5{%
  \reset@font\fontsize{#1}{#2pt}%
  \fontfamily{#3}\fontseries{#4}\fontshape{#5}%
  \selectfont}%
\fi\endgroup%
\resizebox{!}{11cm}{
\begin{picture}(0,0)%
\includegraphics{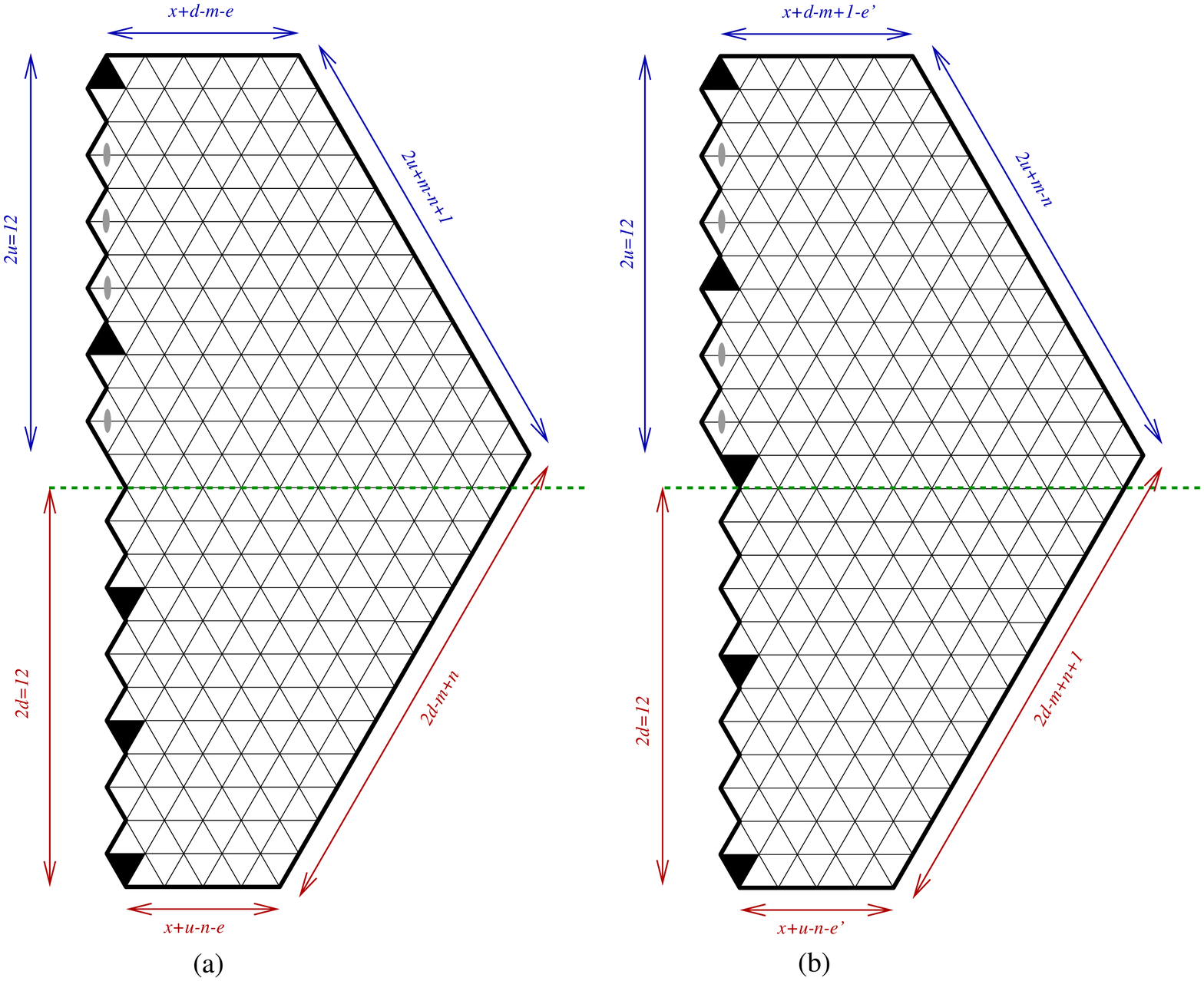}%
\end{picture}%

\begin{picture}(12879,10499)(1074,-11514)
\put(8416,-6684){\makebox(0,0)[lb]{\smash{{\SetFigFont{12}{14.4}{\rmdefault}{\mddefault}{\itdefault}{\color[rgb]{.69,0,0}$l_1$}%
}}}}
\put(8379,-7374){\makebox(0,0)[lb]{\smash{{\SetFigFont{12}{14.4}{\rmdefault}{\mddefault}{\itdefault}{\color[rgb]{.69,0,0}$l_2$}%
}}}}
\put(8394,-8806){\makebox(0,0)[lb]{\smash{{\SetFigFont{12}{14.4}{\rmdefault}{\mddefault}{\itdefault}{\color[rgb]{.69,0,0}$l_3$}%
}}}}
\put(8191,-5596){\makebox(0,0)[lb]{\smash{{\SetFigFont{12}{14.4}{\rmdefault}{\mddefault}{\itdefault}{\color[rgb]{0,0,.69}$h_1$}%
}}}}
\put(8169,-4876){\makebox(0,0)[lb]{\smash{{\SetFigFont{12}{14.4}{\rmdefault}{\mddefault}{\itdefault}{\color[rgb]{0,0,.69}$h_2$}%
}}}}
\put(8176,-3489){\makebox(0,0)[lb]{\smash{{\SetFigFont{12}{14.4}{\rmdefault}{\mddefault}{\itdefault}{\color[rgb]{0,0,.69}$h_3$}%
}}}}
\put(8199,-2791){\makebox(0,0)[lb]{\smash{{\SetFigFont{12}{14.4}{\rmdefault}{\mddefault}{\itdefault}{\color[rgb]{0,0,.69}$h_4$}%
}}}}
\put(8379,-9481){\makebox(0,0)[lb]{\smash{{\SetFigFont{12}{14.4}{\rmdefault}{\mddefault}{\itdefault}{\color[rgb]{.69,0,0}$l_4$}%
}}}}
\put(1861,-6684){\makebox(0,0)[lb]{\smash{{\SetFigFont{12}{14.4}{\rmdefault}{\mddefault}{\itdefault}{\color[rgb]{.69,0,0}$l_1$}%
}}}}
\put(1831,-8056){\makebox(0,0)[lb]{\smash{{\SetFigFont{12}{14.4}{\rmdefault}{\mddefault}{\itdefault}{\color[rgb]{.69,0,0}$l_2$}%
}}}}
\put(1831,-9519){\makebox(0,0)[lb]{\smash{{\SetFigFont{12}{14.4}{\rmdefault}{\mddefault}{\itdefault}{\color[rgb]{.69,0,0}$l_3$}%
}}}}
\put(1651,-5581){\makebox(0,0)[lb]{\smash{{\SetFigFont{12}{14.4}{\rmdefault}{\mddefault}{\itdefault}{\color[rgb]{0,0,.69}$h_1$}%
}}}}
\put(1674,-4171){\makebox(0,0)[lb]{\smash{{\SetFigFont{12}{14.4}{\rmdefault}{\mddefault}{\itdefault}{\color[rgb]{0,0,.69}$h_2$}%
}}}}
\put(1651,-3466){\makebox(0,0)[lb]{\smash{{\SetFigFont{12}{14.4}{\rmdefault}{\mddefault}{\itdefault}{\color[rgb]{0,0,.69}$h_3$}%
}}}}
\put(1651,-2746){\makebox(0,0)[lb]{\smash{{\SetFigFont{12}{14.4}{\rmdefault}{\mddefault}{\itdefault}{\color[rgb]{0,0,.69}$h_4$}%
}}}}
\end{picture}}
\caption{The shapes of the two hybrid halved hexagons.}\label{Fig:Twosidehole}
\end{figure}

\begin{figure}\centering
\includegraphics[width=13cm]{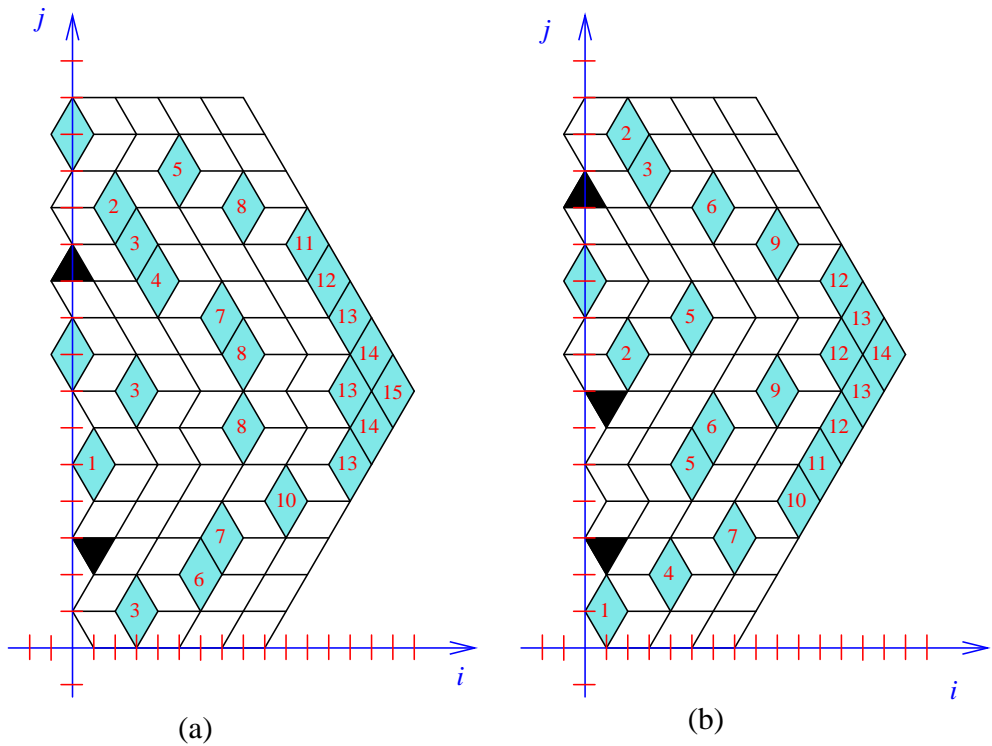}
\caption{Weight assignments for the two hybrid halved hexagons.}\label{Fig:TwosideholeTile}
\end{figure}

\medskip

We consider next a hybrid of  the $A$-type and $D$-type regions.  Our new region can roughly be viewed as a $D$-type region and (the vertical reflection of) an $A$-type region glued together along their bases. Let $x,u,d,m,$ and $n$ be non-negative integers, with $m\leq d$ and $n\leq u$. We consider the halved hexagons whose north, northeast, southeast and south sides have the side-lengths $x+d-m-e, 2u+m-n+1,2d-m+n,x+u-n+e$, respectively, where $e=\min (u-n, d-m)$. The west side is now the concatenation of two vertical zigzag paths with $2d$ and $2u$ steps as in Figure \ref{Fig:Twosidehole}(a). Strictly speaking there is an additional unit step between the two zigzag paths; this brings the length of the west side to $2u+2d+1$. We remove $u-n$ up-pointing unit triangles from the upper part of the west side and $d-m$ down-pointing unit triangles from the  lower part. Next, we assign the weights to the vertical lozenges of the region using the weight assignment $\wt_2$ as in Figure \ref{Fig:TwosideholeTile}(a). (The $j$-axis passes the upper-left vertex of the region, and the $i$-axis runs along the base.) Denote by $S_{x,u,d}((l_i)_{i=1}^{m}; (h_j)_{j=1}^{n})$ the resulting weighted region. 

We are also interested in a counterpart of the $S$-type region as follows. Our base halved hexagon has  side-lengths $x+d-m+1-e',2u+m-n, 2d-m+n+1,x+u-n-e', 2u+2d+1$, where $e'=\min (u-n, d-m+1)$. We still remove $u-n$ up-pointing and $d-m$ down-pointing unit triangles from the west side, as in the case of the $S$-type region. However, we remove an additional down-pointing triangle in the ``middle'' of the west side (in particular, we remove the only down-pointing unit triangle on the west side between the first upper and lower bumps). We use the weight assignment $\wt_2$ just as in the $S$-type regions. We denote by $T_{x,u,d}((l_i)_{i=1}^{m}; (h_j)_{j=1}^{n})$ the new weighted regions. See Figures \ref{Fig:Twosidehole}(b) and \ref{Fig:TwosideholeTile}(b) for examples. One can also viewed a $T$-type region as a hybrid version of a $C$-type region and (the vertical reflection of) a $B$-type region.

The TGFs of the $S$- and $T$-type regions are also given by simple product formulas. Moreover, their tiling generating function are generalizations of those of the $A$- and $D$-type regions and the $B$- and  $C$- regions, respectively.

\begin{thm}[Formula for region of type S]\label{typeSthm} Assume that $x,u,d,m,$ and $n$ are non-negative integers ($d\geq m$, $u\geq n$), and $(l_i)_{i=1}^{m}$ and $(h_j)_{j=1}^{n}$ are two sequences of positive integers between $1$ and $d$ and between $1$ and $u$, respectively. Then
\begin{equation}\label{TypeSformula}
\M(S_{x,u,d}((l_i)_{i=1}^{m}, (h_j)_{j=1}^{n})=P(x,u,d,(l_i)_{i=1}^{m}, (h_j)_{j=1}^{n}).
\end{equation}
\end{thm}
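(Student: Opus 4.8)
\textbf{Proof proposal for Theorem \ref{typeSthm}.}

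The plan is to establish the formula by induction, using Kuo condensation to produce a recurrence that the $S$-type regions satisfy, and then checking that the explicit product $P(x,u,d,(l_i)_{i=1}^{m},(h_j)_{j=1}^{n})$ satisfies the same recurrence together with suitable base cases. First I would pick a version of Kuo condensation (one of the two stated in Section \ref{Sec:Pre}) adapted to graphs with weighted edges, and apply it to the planar bipartite graph dual to the $S$-type region. The crucial preliminary task is to choose the four ``corner" unit triangles at which condensation is performed so that each of the five regions appearing in the identity $\M(G)\M(G')=\M(G_1)\M(G_2)+\M(G_3)\M(G_4)$ is again a region for which the formula is known or is a smaller $S$-type region. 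Concretely, I expect to remove a forced strip of lozenges along one side so that the condensation couples an $S_{x,u,d}$ with parameters $(l_i),(h_j)$ to $S$-type regions in which either one of the lists loses its largest (or smallest) entry, or a parameter like $d$, $m$, $u$, $n$, or $x$ drops by one; the ``mixed" products on the right-hand side should be expressible through the already-proved Theorems \ref{typeAthm}--\ref{typeDthm} (the $A$-, $B$-, $C$-, $D$-type formulas) when one of $m,n$ becomes $0$, which is exactly the specialization $P(x,u,d,\ldots)$ reduces to in those theorems.

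The second ingredient is the algebraic verification that the closed form satisfies the recurrence. Here I would substitute the product formula $P$ into both sides of the Kuo identity and reduce everything to an identity among $q^2$-integers. Because every factor of $P$ is a product of $q$-analogue linear terms $[2(\bar x + \cdots)]_{q^2}$ and factorials $[2\ell_i-1]_{q^2}!$, $[2h_j]_{q^2}!$, the ratio of the two sides collapses to a rational function in $q$ that must be shown identically $1$; this is the standard but delicate bookkeeping of tracking how $\bar x=x+\max(u-n,d-m)$ and the exponents $E,F$ from \eqref{Eexp1}--\eqref{Fexp1} shift when a single parameter changes. I would organize this by isolating the three groups of factors in $P$ — the Vandermonde-type products in the $l_i$ and $h_j$, the ``triangular" products $\prod_{i}\prod_{j}[2\bar x + \cdots]_{q^2}$ recording the background hexagon, and the prefactor $2^{-E}q^{-F}$ — and checking the recurrence separately on each group, so that the $2$-powers and $q$-powers match term by term.

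For the base cases I would handle the situation where one of the two defect lists is empty, which by Theorems \ref{typeAthm} and \ref{typeDthm} gives $\M(S_{x,u,d}((l_i)_{i=1}^{m},\emptyset))=P(x,u,d,(l_i)_{i=1}^{m},\emptyset)$ and $\M(S_{x,u,d}(\emptyset,(h_j)_{j=1}^{n}))=P(x,u,d,\emptyset,(h_j)_{j=1}^{n})$ (after reconciling the top piece of the $S$-region with a vertically reflected $A$-region and a $D$-region glued along the base, whose tilings factor because the common base edge forces a horizontal lozenge layer or, equivalently, because the gluing line is a ``bottleneck" forcing a unique local configuration there). One also needs the trivial cases $m=n=0$ and the degenerate cases where $x$ or $d$ or $u$ is extremal; in those the region either has a unique forced tiling or decomposes, and $P$ specializes to the corresponding elementary product. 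The main obstacle I anticipate is the first step: finding the single placement of the four condensation triangles that simultaneously (a) keeps $G'$ and all four of $G_1,\ldots,G_4$ within the $S$-family or its $A/B/C/D$ specializations, and (b) induces a parameter change small enough that the inductive hypothesis applies. The region is geometrically awkward — it is two zigzag-sided trapezoids fused along a base with an extra unit step and defects on both halves — so the forced-lozenge analysis near the chosen corners, and the verification that no unexpected region (with, say, a defect list that is no longer strictly increasing, or with a ``negative" parameter) appears, is where the real work lies; the subsequent $q$-integer identity, while lengthy, is routine once the recurrence is correctly set up.
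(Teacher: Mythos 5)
Your proposal follows essentially the same route as the paper: induction with Kuo condensation (Lemma \ref{Kuolem1}) applied after forced-lozenge reductions bring the region to the case $x>0$, $h_n=u$, $l_m=d$, with base cases handled by collapsing to the $A$- and $D$-type regions of Theorems \ref{typeAthm} and \ref{typeDthm} and the closed form $P$ verified against the resulting recurrence by $q^2$-integer manipulation. The only small discrepancy is that in the paper all six regions in the condensation identity stay within the $S$-family (the $A$/$D$ reductions occur only in the base cases $m=0$ or $n=0$, via forced lozenges rather than a factorization across the gluing line), but this does not affect the soundness of your plan.
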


\begin{thm}[Formula for region of type T]\label{typeTthm}Assume that $x,u,d,m,$ and $n$ are non-negative integers ($d\geq m$, $u\geq n$), and $(l_i)_{i=1}^{m}$ and $(h_j)_{j=1}^{n}$ are two sequences of positive integers between $1$ and $d$ and between $1$ and $u$, respectively. Then
\begin{equation}\label{TypeTformula}
\M(T_{x,u,d}((l_i)_{i=1}^{m}, (h_j)_{j=1}^{n})=Q(x,u,d,(l_i)_{i=1}^{m}, (h_j)_{j=1}^{n}).
\end{equation}
\end{thm}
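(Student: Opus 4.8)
\textbf{Proof proposal for Theorem \ref{typeTthm}.}

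The plan is to mirror the strategy already set up for Theorem \ref{typeSthm} and run a simultaneous induction. Concretely, I would prove Theorems \ref{typeSthm} and \ref{typeTthm} together by induction on $x+u+d+m+n$, using the two forms of Kuo condensation recorded in Section \ref{Sec:Pre}. The base cases are the regions with $m=n=0$ (and small $x$), where a $T$-type region degenerates into a $C$-type region glued to a vertically reflected $B$-type region; there $\M(T_{x,u,d}(\emptyset;\emptyset))$ should reduce to a product of $\M(C_{x,u}(\ldots))$ and $\M(B_{x,d}(\ldots))$-type factors already established in Theorems \ref{typeBthm} and \ref{typeCthm}, and one checks this agrees with $Q(x,u,d,\emptyset,\emptyset)$ by a direct (but routine) manipulation of $q^2$-factorials. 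Alternatively, one can take Theorems \ref{typeBthm} and \ref{typeCthm} themselves as the base of the induction, since $Q(x,u,0,\emptyset,(h_i))$ and $Q(x,0,d,(l_i),\emptyset)$ are exactly those formulas.

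For the inductive step I would apply Kuo condensation to the $T$-type region with four suitably chosen unit-triangle (or lozenge) "corners" on the boundary — the natural choices are two vertices on the zigzag west side (one on the upper $u$-part near a bump, one on the lower $d$-part near a bump) and two on the straight east/north portion — so that each of the five regions appearing in the Kuo identity is again of type $T$, $S$, or one of $A,B,C,D$, but with a strictly smaller value of the induction parameter or with one fewer dent. The removal of the extra "middle" down-pointing triangle in the $T$-type region (versus the $S$-type region) is precisely what forces the $Q$-formula rather than the $P$-formula, so the key is to pick the Kuo vertices so that this middle defect is preserved in each sub-region; the $S$/$T$ distinction then propagates cleanly through the recursion. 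Each term in the Kuo identity is replaced by its conjectured product formula via the induction hypothesis, after which the identity becomes a purely algebraic relation among the polynomials $P$ and $Q$ (equivalently, among shifted $q^2$-factorials, the exponents $E,F,E',F'$, and the shift parameters $\overline{x},\tilde{x}$).

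The main obstacle will be this final algebraic verification: showing that the $P$/$Q$-product formulas satisfy exactly the recurrence produced by Kuo condensation. This splits into a "$q$-power" bookkeeping part — checking that the exponents $F,F'$ (and the $2$-powers $E,E'$) add up correctly across the five terms, which amounts to verifying polynomial identities in $x,u,d$ and the $l_i,h_j$ coming from the definitions \eqref{Fexp1}, \eqref{Fexp2} of $F,F'$ and \eqref{Eexp1}, \eqref{Eexp2} of $E,E'$ — and a "product-of-brackets" part, where after cancelling common factors one must confirm an identity of the shape $[\,\cdot\,]_{q^2}[\,\cdot\,]_{q^2} + [\,\cdot\,]_{q^2}[\,\cdot\,]_{q^2} = [\,\cdot\,]_{q^2}[\,\cdot\,]_{q^2}$ (the $q^2$-analogue of the determinant/Plücker-type relation underlying Kuo condensation). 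I expect the bracket identity to follow from the elementary relation $[a+b]_{q^2}[a-b]_{q^2}$-type factorizations together with $[a]_{q^2}=\frac{q^{2a}-q^{-2a}}{q^2-q^{-2}}$, reducing everything to a short trigonometric-style identity; the genuinely tedious step is confirming that the dent-dependent double products $\prod_{j=1}^{l_i-i}$ and $\prod_{j=1}^{h_i-i}$ telescope correctly when one $l_i$ or $h_j$ is incremented or removed. Because the $S$-type computation in Theorem \ref{typeSthm} has the same structure, I would organize the write-up so that the $T$-type case reuses as many of those lemmas as possible, flagging only the places where the extra middle dent changes a shift by $1$ (this is the origin of every "$+1$" discrepancy between the definitions of $P$ and $Q$, and between $\overline{x},\tilde{x}$ and between $e,e'$).
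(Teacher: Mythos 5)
Your overall strategy --- induction, base cases supplied by Theorems \ref{typeBthm} and \ref{typeCthm}, a Kuo condensation recurrence whose five terms are matched against the $Q$-formula --- is exactly how the paper handles this (the paper in fact omits the proof of Theorem \ref{typeTthm} entirely, stating it is the same as that of Theorem \ref{typeSthm}). Two of your specific choices, however, would cause trouble. First, the induction parameter $x+u+d+m+n$ does not work: before Kuo condensation can be applied one must normalize to the case $h_n=u$ and $l_m=d$ by stripping forced lozenges from the top and bottom, which replaces $T_{x,u,d}$ by $T_{x+u-h_n+d-l_m,\,h_n,\,l_m}$ with the \emph{same} dent sequences; under your parameter this step leaves $x+u+d+m+n$ unchanged, so the induction is circular there. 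The paper's parameter $x+2u+2d$ strictly decreases under this normalization (and under all five terms of the Kuo recurrence), so you should adopt it, or use a lexicographic refinement.

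Second, the Kuo vertices do not go on the west zigzag. As in the proofs of Lemma \ref{proctorlem} and Theorems \ref{typeAthm}--\ref{typeBthm}, all four are placed at the right-hand corners --- two unit triangles at the upper-right corner and two at the lower-right corner --- so that after removing forced lozenges every one of the five regions is again of type $T$ (with $u$ and/or $d$ decreased by $1$, $x$ shifted by $\pm 1$, and $h_n$ and/or $l_m$ dropped); the middle defect sits far from these corners and survives automatically, so no simultaneous induction with the $S$-family and no mixing with types $A$--$D$ in the recurrence is needed. Your proposed west-side placement would instead perturb the dent pattern and there is no reason to expect the resulting regions to stay within the $T$-family. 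With these two corrections the rest of your outline --- base cases $m=0$ (type $C$ after forced lozenges) and $n=0$ (reflected type $B$), followed by the three-term $q^2$-bracket identity verification --- is the paper's argument.
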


\begin{rmk}
We note that the unweighted versions of Theorems \ref{typeAthm}--\ref{typeTthm} were treated by Ciucu in \cite{Ciucu2}. Moreover, following the line of work in Ciucu's paper, one can obtain the tiling generating function of the symmetric hexagon with two families of holes on the symmetry axis, say by using our Theorems \ref{typeSthm} and \ref{typeTthm} along with Ciucu's Factorization Theorem \cite[Theorem 1.2]{Ciucu97}. We leave it as an exercise for the reader.
\end{rmk}

\section{Preliminaries}\label{Sec:Pre}

A \emph{perfect matching} of a graph is a collection of disjoint edges that covers all vertices of the graph. There is bijection between tilings of a region $R$ on the triangular lattice and perfect matchings of its \emph{(planar) dual graph} $G$ (i.e., the graph whose vertices are the unit triangles in $R$ and whose edges connect precisely two unit triangles sharing an edge). The weight of an edge in the  dual graph is equal to that of the corresponding lozenge in the region. We use the notation $\M(G)$ for the weighted sum of the perfect matchings of the graph $G$, where the weight of a perfect matching is the product of its edge-weights. We often call $\M(G)$ the \emph{matching generating function} of $G$.

We will use the following two versions of the Kuo condensation in our proofs.

\begin{lem}[Theorem 5.1 in \cite{Kuo}]\label{Kuolem1}
Let $G=(V_1,V_2,E)$ be a weighted plane bipartite graph in which $|V_1|=|V_2|$. Let vertices $u,v,w,s$ appear on a face of $G$, in that order. If $u,w \in V_1$ and $v,s\in V_2$, then
\begin{equation}\label{Kuoeq1}
\M(G)\M(G-\{u,v,w,s\})=\M(G-\{u,v\})\M(G-\{w,s\})+\M(G-\{u,s\})\M(G-\{v,w\}).
\end{equation}
\end{lem}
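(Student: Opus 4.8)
The proof I propose follows the standard ``Kuo condensation + induction'' strategy that has become a mainstay in this area, adapted here to the weighted setting of the symmetric weight $\wt_1$ (and its variant $\wt_2$). The target is Lemma~\ref{Kuolem1} as stated — this is literally Kuo's Theorem 5.1 — so strictly speaking the proof plan below is for the \emph{role it plays}: I will indicate how I would prove the condensation identity itself, since it is the engine for every theorem in the paper.

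\medskip

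\noindent\textbf{Proof plan for the condensation identity.}
First I would pass from the region to its planar dual bipartite graph $G=(V_1,V_2,E)$, so that weighted lozenge tilings become weighted perfect matchings and $\M(R)=\M(G)$. The heart of the argument is a \emph{graphical/involutive} proof of \eqref{Kuoeq1}. Fix the four vertices $u,w\in V_1$ and $v,s\in V_2$ occurring in this cyclic order on a single face. The plan is to interpret each side of \eqref{Kuoeq1} as a weighted count of pairs of matchings and to construct a weight-preserving bijection between the two sides. Concretely, a term on the left — say $\M(G)\,\M(G-\{u,v,w,s\})$ — is a weighted sum over ordered pairs $(M_1,M_2)$ where $M_1$ is a perfect matching of $G$ and $M_2$ is a perfect matching of $G-\{u,v,w,s\}$. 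Overlaying $M_1\cup M_2$ (as a multiset of edges) produces a disjoint union of doubled edges and alternating cycles/paths; because $M_2$ omits exactly the four vertices $u,v,w,s$, the symmetric difference $M_1\triangle M_2$ consists of even cycles together with a set of paths whose endpoints are exactly $\{u,v,w,s\}$. There are only a bounded number of ways these four endpoints can be paired up by paths, and the planarity constraint (the four vertices lie on one face, in the given cyclic order) forbids the pairing $\{u,w\}\cup\{v,s\}$ while allowing $\{u,v\}\cup\{w,s\}$ and $\{u,s\}\cup\{v,w\}$ — these are precisely the two products on the right-hand side. Re-routing the superposition along these path systems gives the weight-preserving bijection; the weights match because toggling a matching along an alternating cycle or path in a region all of whose lozenges carry the product weight $\wt_1$ (or $\wt_2$) only permutes which lozenges are used, and one checks the product of edge-weights is preserved under the re-routing (this is where the specific ``elliptic''/symmetric form of $\wt_1$ is harmless — any edge weighting works for Kuo condensation, the identity is purely combinatorial).

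\medskip

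\noindent\textbf{Key steps, in order.}
(1) Translate to dual graphs and record that the four distinguished unit triangles $u,v,w,s$ lie on a common face in the stated cyclic order — for the regions in this paper these will be four triangles chosen on the boundary (typically near the zigzag west side and along the base), and verifying the cyclic order is a small but essential bookkeeping step. (2) Prove \eqref{Kuoeq1} by the superposition/re-routing bijection sketched above, or simply cite Kuo~\cite{Kuo} directly since the statement is weight-agnostic. (3) Separately establish the companion ``unbalanced'' or ``reflective'' version of Kuo condensation (the paper says two versions will be used; the second one handles the cases where $u,w$ are on the same side of the bipartition or where $|V_1|\neq|V_2|$, via a standard modification). (4) Then, for each family $R^1,\dots,R^4$ and $A,B,C,D,S,T$, choose the four vertices $u,v,w,s$ so that the five regions $G$, $G-\{u,v,w,s\}$, $G-\{u,v\}$, $G-\{w,s\}$, $G-\{u,s\}$, $G-\{v,w\}$ are again (smaller, or shifted) members of the same families — possibly with a forced lozenge or two peeled off and absorbed into a monomial prefactor in $q$ and in $2$. (5) Verify by induction on the size parameters that the claimed product formulas $P$ and $Q$ satisfy the resulting recurrence, checking the base cases (small $n$, $m$, or degenerate regions where $\M(R)=1$ by convention) by hand.

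\medskip

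\noindent\textbf{Main obstacle.}
The condensation identity itself is routine (and can be cited), so the real difficulty — and where essentially all the work of Sections~\ref{Sec:Quarter} and~\ref{Sec:Twosidehole} will go — is step (4)–(5): choosing the four vertices so that all five sub-regions stay inside the same families \emph{and} bookkeeping the forced-lozenge prefactors, the shifts in the parameters $x,u,d$, the changes in $s_i$ or $l_i,h_j$, and especially the powers of $q$ and of $\tfrac12$ (the exponents $E,F,E',F'$ and the $f(t,x,y)$, $\langle{}^{b}_{a}\rangle$ terms are engineered precisely to make the induction close). The delicate point with the \emph{weighted} version, as opposed to the unweighted results of Ciucu~\cite{Ciucu2}, is that removing vertices and peeling forced lozenges changes the effective position of the $j$-axis relative to the region, hence changes which vertical lozenges have weight $\tfrac{q^i+q^{-i}}{2}$ versus $\tfrac12$; tracking this consistently — so that $R^3$ really is ``$R^1$ with $s_i\mapsto s_i-\tfrac12$'' at the level of the recurrence, as Remark~\ref{reciprocity} foreshadows — is the part that requires genuine care rather than mechanical computation. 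Finally, one verifies that the ratio of the proposed closed form across the five regions matches the recurrence; this reduces to a $q^2$-integer identity (a ratio of products of $[\,\cdot\,]_{q^2}$ telescoping correctly), which, while ``routine,'' is long and is where the precise form of the exponents must be checked once and for all.
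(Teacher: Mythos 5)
The paper does not prove Lemma~\ref{Kuolem1} at all: it is imported verbatim as Theorem~5.1 of Kuo \cite{Kuo}, and your proposal itself concedes that the correct move is to ``simply cite Kuo directly since the statement is weight-agnostic.'' In that sense you and the paper take the same route, and your superposition sketch (overlay $M_1\in\mathcal{M}(G)$ with $M_2\in\mathcal{M}(G-\{u,v,w,s\})$, decompose the symmetric difference into doubled edges, cycles, and paths whose endpoints are exactly $u,v,w,s$, then re-route along the path system) is indeed the standard and correct proof of \eqref{Kuoeq1}. One small correction: the pairing $\{u,w\}\cup\{v,s\}$ is excluded not by planarity but by bipartiteness --- each path in the superposition has both endpoints deficient in $M_2$, hence begins and ends with $M_1$-edges, has odd length, and therefore joins a $V_1$-vertex to a $V_2$-vertex, which rules out pairing $u$ with $w$ since both lie in $V_1$. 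Planarity and the cyclic order on the face are what guarantee that the two surviving pairings correspond cleanly to the two products on the right-hand side without crossing or overcounting. The bulk of your write-up (steps (1), (3)--(5) and the ``main obstacle'' discussion) concerns how the lemma is deployed in Sections~\ref{Sec:Quarter} and~\ref{Sec:Twosidehole}, which is accurate as a description of the paper's strategy but is not part of proving the statement at hand.
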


\begin{lem}[Theorem 5.3 in \cite{Kuo}]\label{Kuolem2}
Let $G=(V_1,V_2,E)$ be a weighted plane bipartite graph in which $|V_1|=|V_2|+1$. Let vertices $u,v,w,s$ appear on a face of $G$, in that order. If $u,v,w \in V_1$ and $s\in V_2$, then
\begin{equation}\label{Kuoeq2}
\M(G-\{v\})\M(G-\{u,w,s\})=\M(G-\{u\})\M(G-\{v,w,s\})+\M(G-\{w\})\M(G-\{u,v,s\}).
\end{equation}
\end{lem}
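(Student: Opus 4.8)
The plan is to reprove this Kuo-type identity by the standard superposition-of-matchings argument (a faster but less self-contained alternative would be to pass from $\M$ to a determinant via the Lindström--Gessel--Viennot/Kasteleyn correspondence for planar bipartite graphs and then invoke the Desnanot--Jacobi (Dodgson) identity, but I prefer the bijective route). Throughout, $\wt(M)=\prod_{e\in M}\wt(e)$, and $\M(H)$ is the sum of $\wt(M)$ over all perfect matchings $M$ of $H$; write $\mathcal{M}(H)$ for the set of those matchings. \emph{Step 1 (superposition).} Fix $M_1\in\mathcal{M}(G-v)$ and $M_2\in\mathcal{M}(G-\{u,w,s\})$ and form the edge-multiset $M_1\sqcup M_2$. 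Every vertex of $G$ has degree at most $2$ in it, so $M_1\sqcup M_2$ is a disjoint union of doubled edges, alternating cycles, and alternating paths; the vertices of degree exactly $1$ are precisely $u,v,w,s$, with $v$ covered only by $M_2$ and $u,w,s$ covered only by $M_1$. Hence $M_1\sqcup M_2$ contains exactly two paths, and these two paths split $\{u,v,w,s\}$ into two pairs; call this pair-partition the \emph{type} of $(M_1,M_2)$. The same construction and terminology apply to pairs in $\mathcal{M}(G-u)\times\mathcal{M}(G-\{v,w,s\})$ and in $\mathcal{M}(G-w)\times\mathcal{M}(G-\{u,v,s\})$, and in all cases $\wt(M_1)\wt(M_2)=\prod_{e\in M_1\sqcup M_2}\wt(e)$ depends only on the underlying multiset.

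\emph{Step 2 (pinning down the type by parity and planarity).} Since $G$ is bipartite, a path joining two vertices of $V_1$ has even length, a path joining $V_1$ to $V_2$ has odd length, and along an alternating path the two end-edges lie in the same matching iff the path has odd length. Combining this with the coloring data of Step~1 (recall $u,v,w\in V_1$, $s\in V_2$) forbids, in each of the three products, exactly one of the two paths in one of the three a priori conceivable types: for $\mathcal{M}(G-v)\times\mathcal{M}(G-\{u,w,s\})$ a $u$--$w$ path is impossible, so the type is $\{u,v\}\,|\,\{w,s\}$ or $\{u,s\}\,|\,\{v,w\}$; for $\mathcal{M}(G-u)\times\mathcal{M}(G-\{v,w,s\})$ the type is $\{u,v\}\,|\,\{w,s\}$ or $\{u,w\}\,|\,\{v,s\}$; and for $\mathcal{M}(G-w)\times\mathcal{M}(G-\{u,v,s\})$ the type is $\{u,w\}\,|\,\{v,s\}$ or $\{u,s\}\,|\,\{v,w\}$. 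Now I invoke planarity: the two paths are vertex-disjoint simple paths of the plane graph $G$, and $u,v,w,s$ occur in this cyclic order on a common face $F$; closing a hypothetical $u$--$w$ path with a simple arc through the interior of $F$ yields a Jordan curve with $v$ and $s$ on opposite sides, so a $v$--$s$ path would have to cross it — impossible. Hence the ``diagonal'' type $\{u,w\}\,|\,\{v,s\}$ never occurs. Letting $a$ and $b$ be the $\wt$-weighted counts of pairs in $\mathcal{M}(G-v)\times\mathcal{M}(G-\{u,w,s\})$ of types $\{u,v\}\,|\,\{w,s\}$ and $\{u,s\}\,|\,\{v,w\}$, we get $\M(G-v)\M(G-\{u,w,s\})=a+b$, while $\M(G-u)\M(G-\{v,w,s\})$ equals the weighted count $c$ of type-$\{u,v\}\,|\,\{w,s\}$ pairs on its side and $\M(G-w)\M(G-\{u,v,s\})$ equals the weighted count $h$ of type-$\{u,s\}\,|\,\{v,w\}$ pairs on its side. (If one of these graphs has no perfect matching the corresponding set of pairs is empty and the relevant quantity is $0$, so the bookkeeping still holds.)

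\emph{Step 3 (matching swaps and conclusion).} Given a type-$\{u,v\}\,|\,\{w,s\}$ pair $(M_1,M_2)$ counted by $a$, recolor every edge along its $u$--$v$ path (edges in $M_1$ become ``$M_2$'' and vice versa). This does not change the edge-multiset, hence not the collection of paths, so the type stays $\{u,v\}\,|\,\{w,s\}$ and the weight is unchanged; but now $u$ is covered only by the new $M_2$ and $v$ only by the new $M_1$, so the result lies in $\mathcal{M}(G-u)\times\mathcal{M}(G-\{v,w,s\})$. Performing the analogous recoloring on the $u$--$v$ path of a pair on that side recovers the original pair, so this is a weight-preserving bijection onto the type-$\{u,v\}\,|\,\{w,s\}$ pairs there, whence $a=c$. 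Recoloring instead along the $v$--$w$ path of a type-$\{u,s\}\,|\,\{v,w\}$ pair counted by $b$ gives, by identical reasoning, a weight-preserving bijection onto the type-$\{u,s\}\,|\,\{v,w\}$ pairs in $\mathcal{M}(G-w)\times\mathcal{M}(G-\{u,v,s\})$, so $b=h$. Therefore
\[
\M(G-v)\,\M(G-\{u,w,s\})=a+b=c+h=\M(G-u)\,\M(G-\{v,w,s\})+\M(G-w)\,\M(G-\{u,v,s\}),
\]
which is \eqref{Kuoeq2}.

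\emph{Main obstacle.} The crux is Step~2: one must check, separately for each of the three products, that exactly two of the three conceivable types are admissible (a routine but attention-demanding parity count keyed to which matching sits at each of $u,v,w,s$), and then make the Jordan-curve/non-crossing argument rigorous so as to eliminate the diagonal type, which is exactly where the hypothesis that $u,v,w,s$ lie in this cyclic order on a single face is used. Once that is in place, Step~3 is only a matter of matching the two surviving types on the left with the unique surviving type on each of the two right-hand graphs.
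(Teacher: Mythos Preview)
Your proof is correct. The superposition argument is exactly Kuo's original method: the parity bookkeeping in Step~2 is right (for each of the three products exactly one of the three pairings is ruled out by the alternating/bipartite parity check, and the Jordan-curve argument then kills the diagonal type $\{u,w\}\,|\,\{v,s\}$ in the two places where parity alone does not), and the path-recoloring bijections in Step~3 are weight-preserving because they leave the underlying edge-multiset untouched.

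As for comparison: the paper does not give its own proof of this lemma at all. It is stated in the Preliminaries section purely as a quotation of Theorem~5.3 of \cite{Kuo} and is used as a black box in the Kuo-condensation inductions later on. So there is nothing in the paper to compare your argument against; what you have written is essentially a clean rederivation of Kuo's original superposition proof.
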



A \emph{forced lozenge} of a region $R$ is a lozenge that must be contained in any tiling of $R$. Assume that we remove $k$ forced lozenges $l_1,l_2,\dots,l_k$ from $R$ to obtain the region $R'$. Then we have
\begin{equation}
\M(R')=\left(\prod_{i=1}^{k}\wt(l_i)\right)^{-1} \cdot \M(R),
\end{equation}
where $\wt(l_i)$ is the weight of the lozenge $l_i$.

If the region $R$ admits a tiling, then $R$ must have the same number of up-pointing and down-pointing unit triangles.  We call such a region \emph{balanced}.  The following simple lemma is especially useful  in the enumeration of tilings as it allows us to decompose a large region into smaller ones.

\begin{lem}[Region-splitting Lemma \cite{Tri18,Tri19}]\label{RS}
Assume $R$ is a balanced region  and $Q$  is a subregion of $R$ satisfying two conditions:
\begin{enumerate}
\item The unit triangles in $Q$ that have an edge on the  boundary between $Q$ and $R\setminus Q$ have the same orientation (all are up-pointing or all are down-pointing);
\item Q is balanced.
\end{enumerate}
Then we have $\M(R)=\M(Q) \cdot \M(R\setminus Q)$.
\end{lem}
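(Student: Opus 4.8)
The plan is to set up a weight-preserving bijection between tilings of $R$ and pairs $(\tau_Q, \tau_{R\setminus Q})$ consisting of a tiling $\tau_Q$ of $Q$ and a tiling $\tau_{R\setminus Q}$ of $R\setminus Q$; the identity $\M(R)=\M(Q)\cdot\M(R\setminus Q)$ then follows immediately, since the weight of a tiling is the product of the weights of its lozenges and hence splits as a product over the two parts once we know that no lozenge of any tiling of $R$ straddles the cut. The first step is therefore to show that no lozenge in any tiling of $R$ can have one unit triangle in $Q$ and the other in $R\setminus Q$. By hypothesis~(1), every unit triangle of $Q$ adjacent to the boundary between $Q$ and $R\setminus Q$ has the same orientation, say all are up-pointing (the down-pointing case is symmetric, or is obtained by reflecting the lattice). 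A lozenge straddling the cut would consist of one up-pointing and one down-pointing unit triangle sharing an edge; the triangle on the $Q$-side of that shared edge lies on the boundary and so must be up-pointing, while the triangle on the $R\setminus Q$-side must then be down-pointing. But an up-pointing triangle and a down-pointing triangle sharing an edge across the cut would force that shared edge to be a horizontal edge with the up-pointing triangle below and the down-pointing triangle above — and one checks, from the geometry of how $Q$ sits inside $R$, that along the entire $Q$–$(R\setminus Q)$ boundary the up-pointing boundary triangles of $Q$ present only their two non-horizontal edges to $R\setminus Q$ (their horizontal edges lie in the interior of $Q$ or on $\partial R$). Hence no straddling lozenge exists: the cut is a ``rigid'' boundary that every tiling of $R$ must respect.

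With rigidity established, the second step is to observe that the restriction map $\tau\mapsto(\tau|_Q,\tau|_{R\setminus Q})$ is well defined: since every lozenge of $\tau$ lies entirely in $Q$ or entirely in $R\setminus Q$, the lozenges of $\tau$ contained in $Q$ cover every unit triangle of $Q$ exactly once and hence form a tiling $\tau|_Q$ of $Q$, and likewise $\tau|_{R\setminus Q}$ is a tiling of $R\setminus Q$. Here we use that $Q$ is balanced (hypothesis~(2)) and that $R$ is balanced, which guarantees $R\setminus Q$ is balanced too, so both pieces can indeed carry tilings rather than the statement being vacuous. The third step is to check that this map is a bijection. It is injective because $\tau$ is recovered as the union $\tau|_Q\cup\tau|_{R\setminus Q}$. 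It is surjective because, given any tiling $\tau_Q$ of $Q$ and any tiling $\tau_{R\setminus Q}$ of $R\setminus Q$, their union is a collection of lozenges that covers each unit triangle of $R$ exactly once (the two sets of unit triangles partition those of $R$) and consists of pairwise non-overlapping lozenges (a lozenge of $\tau_Q$ and a lozenge of $\tau_{R\setminus Q}$ lie in disjoint regions, so cannot overlap); thus $\tau_Q\cup\tau_{R\setminus Q}$ is a tiling of $R$ restricting to the given pair.

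Finally, for the weight count: if $\wt(\tau)$ denotes the product of the weights of the lozenges of $\tau$, then $\wt(\tau)=\wt(\tau|_Q)\cdot\wt(\tau|_{R\setminus Q})$ because the lozenge set of $\tau$ is the disjoint union of those of $\tau|_Q$ and $\tau|_{R\setminus Q}$. Summing over all tilings $\tau$ of $R$ and using the bijection,
\[
\M(R)=\sum_{\tau}\wt(\tau)=\sum_{\tau_Q}\sum_{\tau_{R\setminus Q}}\wt(\tau_Q)\,\wt(\tau_{R\setminus Q})=\Big(\sum_{\tau_Q}\wt(\tau_Q)\Big)\Big(\sum_{\tau_{R\setminus Q}}\wt(\tau_{R\setminus Q})\Big)=\M(Q)\cdot\M(R\setminus Q),
\]
which is the claim. (The degenerate cases are consistent with the convention $\M=1$ for empty-interior regions and $\M=0$ when no tiling exists: if either piece has no tiling then neither does $R$, and both sides vanish.) I expect the only genuinely non-routine step to be the first one — the rigidity of the cut — since it is where hypothesis~(1) is actually used and where one must argue carefully from the local geometry of the triangular lattice that a common-orientation boundary cannot be crossed by a lozenge; the bijection and the weight factorization are then formal.
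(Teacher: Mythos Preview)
The paper does not actually prove this lemma---it is cited from \cite{Tri18,Tri19} without proof---so I compare your attempt to the standard argument.

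Your rigidity step (the ``first step'') has a real gap. You try to rule out straddling lozenges using hypothesis~(1) alone, via a geometric claim that ``the up-pointing boundary triangles of $Q$ present only their two non-horizontal edges to $R\setminus Q$.'' This claim is neither proved nor true in general: nothing in the hypotheses controls which edges of the boundary triangles lie on the cut, and in any case every neighbor of an up-pointing triangle is down-pointing, so edge type is irrelevant. Hypothesis~(1) by itself does \emph{not} forbid straddling lozenges---take $R$ to be a single vertical lozenge and $Q$ its lower (up-pointing) unit triangle; then (1) holds, yet the unique tiling of $R$ straddles the cut. What fails in that example is hypothesis~(2), and that is precisely where the force of the lemma lies.

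The correct argument is a count that uses \emph{both} hypotheses. Fix a tiling $\tau$ of $R$ and let $k$ be the number of lozenges of $\tau$ straddling the cut. By hypothesis~(1), the $Q$-half of each straddling lozenge is up-pointing, so straddling lozenges cover $k$ up-pointing and $0$ down-pointing triangles of $Q$. Lozenges of $\tau$ lying entirely in $Q$ cover equal numbers of each orientation. Since every triangle of $Q$ is covered and $Q$ is balanced (hypothesis~(2)), we get $k=0$. Once rigidity is established this way, your bijection and weight-factorization steps go through as written. You did invoke hypothesis~(2), but only to say the pieces are not vacuously tileable; its essential role is in the rigidity count.
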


\section{Proofs Theorems \ref{quartthm1}--\ref{quartthm4}}\label{Sec:Quarter}

\begin{figure}\centering
\includegraphics[width=13cm]{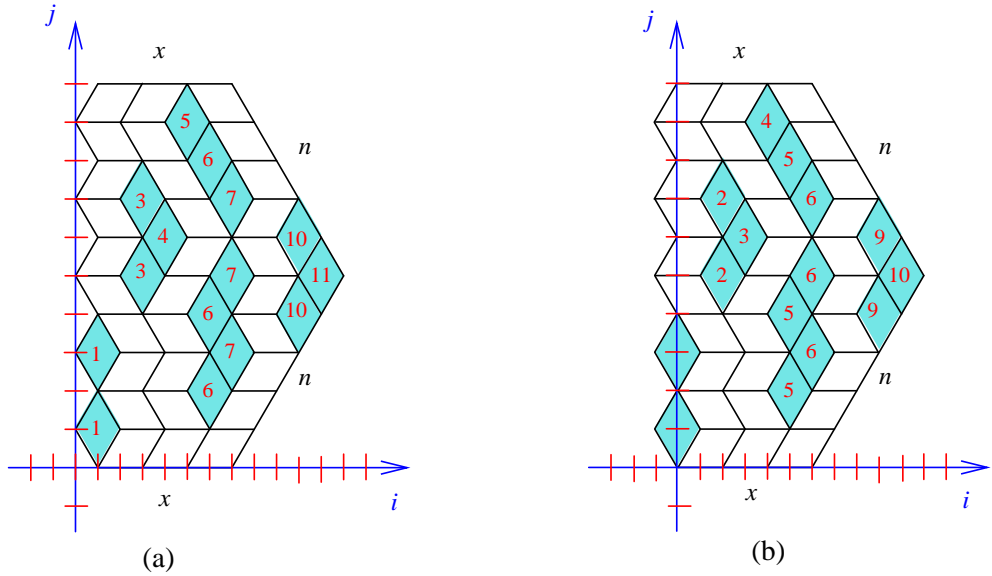}
\caption{Two ways to assign weights to the lozenges of a halved hexagon. The shaded lozenges intersected by the $j$-axis have weight $\frac{1}{2}$ while the ones with label $n$ have weight $\frac{q^{n}+q^{-n}}{2}$.}\label{Fig:halvedhex}
\end{figure}

Before we can prove Theorems \ref{quartthm1}--\ref{quartthm4} by induction, we prove a result on a weighted halved hexagon that serves as a base case. The halved hexagon is obtained by a symmetric hexagon of side-lengths $2x+1,n,n,2x+1,n,n$ (in counter-clockwise order, starting from the top side) by diving  along a vertical zigzag cut. We assign weights to lozenges of the halved hexagon as in Figure \ref{Fig:halvedhex}(a); this weight assignment is similar to that of $\wt_1$ from the quartered hexagons. Denote by $P_{n,x}$ the resulting weighted region.

The \emph{$q$-integer} $[n]_q$ is defined as $[n]_q=1+q+\cdots+q^{n-1}$. Then the \emph{$q$-factorial} is defined to be  the product of $q$ integers: $[n]_q!=[1]_q[2]_q\cdots[n]_q$.

\begin{lem}\label{proctorlem} Assume that $x$ and $n$ are non-negative integers. Then we have
\begin{align}\label{hheq1}
\M(P_{n,x})=\frac{2^{-n^2}q^{-\sum_{i=1}^{n}(2i-1)(2x+i)}}{[1]_{q^2}![3]_{q^2}!\cdots[2n-1]_{q^2}!}\prod_{i=1}^{n}[2(x+i)]_{q^2}\prod_{1\leq i <j\leq n}[2(2x+i+j)]_{q^2}[2(j-i)]_{q^2}.
\end{align}
\end{lem}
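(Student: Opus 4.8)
\textbf{Proof plan for Lemma \ref{proctorlem}.}

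The plan is to prove the formula by induction on $n$, using Kuo condensation (Lemma \ref{Kuolem1}) together with the region-splitting Lemma \ref{RS} and forced-lozenge removal to set up a recurrence for $\M(P_{n,x})$. The base case $n=0$ is trivial ($P_{0,x}$ is degenerate, so $\M(P_{0,x})=1$, matching the empty products on the right). For the inductive step, I would apply Kuo condensation to the dual graph of $P_{n,x}$, choosing the four boundary vertices $u,v,w,s$ on the outer face so that the five resulting regions $G$, $G-\{u,v,w,s\}$, $G-\{u,v\}$, $G-\{w,s\}$, $G-\{u,s\}$, $G-\{v,w\}$ are all (up to forced lozenges) halved hexagons of the same type but with smaller parameters --- concretely, I expect the natural choice to produce copies of $P_{n-1,x}$, $P_{n-1,x+1}$, and perhaps $P_{n,x}$-type regions with a shifted $x$, or halved hexagons with one dent which then reduce after removing forced lozenges and splitting off a triangular piece via Lemma \ref{RS}. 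The positions of $u,v,w,s$ should be taken near the corners of the zigzag side and the opposite side so that deleting pairs of them shaves off exactly one ``layer'' of the region.

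Once the recurrence is in hand, the remaining work is purely algebraic: I would verify that the conjectured closed form (\ref{hheq1}) satisfies the same recurrence with the same initial condition. This means substituting the product formula for each of the (at most) five $\M$-terms appearing in the Kuo identity, factoring out the common $q$-power and $2$-power prefactors, and checking that the surviving identity among $q^2$-integers and $q^2$-factorials holds --- this reduces, after cancellation, to an elementary identity of the shape $[a]_{q^2}[b]_{q^2} = [c]_{q^2}[d]_{q^2} + [e]_{q^2}[f]_{q^2}$ or a telescoping of factorial ratios, which is routine to confirm. I would also need to track carefully the weight contributions of the vertical lozenges whose centers move relative to the $j$-axis when parameters shift, since these determine the exponent $-\sum_{i=1}^n (2i-1)(2x+i)$ of $q$ and the power $2^{-n^2}$; bookkeeping these weights correctly is where sign and exponent errors are most likely to creep in.

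The main obstacle, I expect, is the combinatorial geometry of the condensation step: identifying a placement of $u,v,w,s$ on a single face of the dual graph of $P_{n,x}$ such that \emph{all} of the six subregions in (\ref{Kuoeq1}) are again (after forced-lozenge removal and region-splitting) halved hexagons of type $P$ with controlled parameters, rather than some new unweighted shape not covered by the induction hypothesis. It may be necessary to prove the lemma in tandem with an auxiliary family (e.g., halved hexagons of type $P$ with a single dent on the zigzag side, which is essentially a special case of the type-$A$ regions), so that the induction closes. If a direct one-step recurrence is not available, an alternative is to express $P_{n,x}$ via Proctor's result on transpose-complementary plane partitions (cited in the introduction) in the unweighted case and then track the weight as a principal-specialization-type factor; but I would first try to make the self-contained Kuo-condensation argument work, since that is the technique the paper sets up in Section \ref{Sec:Pre} and presumably reuses for Theorems \ref{quartthm1}--\ref{quartthm4}.
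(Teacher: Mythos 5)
Your overall strategy --- Kuo condensation on the dual graph of $P_{n,x}$, derivation of a recurrence, then verification that the product formula satisfies it --- is exactly what the paper does, and your worry about needing an auxiliary family turns out to be unfounded: with the four vertices $u,v,w,s$ taken as two adjacent pairs of unit triangles at the upper-right and lower-right corners of the region, all six graphs in (\ref{Kuoeq1}) reduce, after removing forced lozenges, to regions of type $P$ again. The resulting recurrence is
\[
\M(P_{n,x})\,\M(P_{n-2,x})=\frac{q^{2x+n}+q^{-(2x+n)}}{2}\,\M(P_{n-1,x})^2+\M(P_{n-2,x+1})\,\M(P_{n,x-1}),
\]
where the prefactor is the weight of the single rightmost forced vertical lozenge; all other forced-lozenge weights cancel between the two sides.

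The genuine gap is your choice of induction variable. You propose induction on $n$ alone, but the recurrence above contains the term $\M(P_{n,x-1})$, whose $n$-parameter is unchanged, so an induction hypothesis indexed by $n$ does not cover it --- you half-anticipate this when you mention ``$P_{n,x}$-type regions with a shifted $x$,'' but your stated setup cannot absorb them. The paper inducts on $n+x$, which strictly decreases for all five regions on the right-hand side. Correspondingly, the base cases cannot be just $n=0$: you also need $x=0$ (the region has a unique tiling) and $n=1$ (a hexagon of side-lengths $x,1,1,x,1,1$ with exactly $x$ tilings), since the recurrence steps $n$ down by $2$ and $x$ down by $1$. With the induction statistic corrected and these base cases added, the rest of your plan --- the weight bookkeeping for the forced vertical lozenges and the routine algebraic check that (\ref{hheq1}) satisfies the recurrence --- goes through essentially as you describe.
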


As in the case of the quartered hexagon, we consider a variant of $P_{n,x}$ by re-assigning the lozenge-weights as in Figure \ref{Fig:halvedhex}(b). In particular, the vertical lozenges with center at the point $(i,j)$ are still weighted by $\frac{q^{i}+q^{-i}}{2}$, except for the ones intersected by the $j$-axis, which have weight $1/2$ (this weighting is similar to that of $\wt_2$ from the quartered hexagons). Denote by $P'_{n,x}$ this new weighted region.

\begin{lem}\label{proctorlem2} Assume that $x$ and $n$ are non-negative integers. Then we have
\begin{align}\label{hheq2}
\M(P'_{n,x})=\frac{2^{-n^2}q^{-\sum_{i=1}^{n}(2i-1)(2x+i-1)}}{[1]_{q^2}![3]_{q^2}!\cdots[2n-1]_{q^2}!}\prod_{i=1}^{n}[2(x+i)-1]_{q^2}\prod_{1\leq i <j\leq n}[2(2x+i+j-1)]_{q^2}[2(j-i)]_{q^2}.
\end{align}
\end{lem}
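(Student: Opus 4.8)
\textbf{Proof proposal for Lemma \ref{proctorlem2}.}

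The plan is to deduce Lemma \ref{proctorlem2} from Lemma \ref{proctorlem} by a direct comparison of the two weighted regions, rather than running a separate induction. The underlying shapes of $P_{n,x}$ and $P'_{n,x}$ are identical; only the weight assignments differ, and they differ in a very controlled way: passing from $\wt_1$ to $\wt_2$ shifts the $j$-axis by half a unit, so every vertical lozenge that had weight $\frac{q^i+q^{-i}}{2}$ in $P_{n,x}$ now sits at a half-integer horizontal coordinate and gets weight $\frac{q^{i-1/2}+q^{-(i-1/2)}}{2}$ (after renormalizing the coordinate so exponents stay in $\frac12\mathbb{Z}$), while the strip of vertical lozenges straddling the new axis gets weight $1/2$. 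In other words, up to the explicit prefactors, $\M(P'_{n,x})$ should be obtained from $\M(P_{n,x})$ by the substitution $x \mapsto x - \tfrac12$ in the formula of Lemma \ref{proctorlem}, combined with a correction to the power-of-$2$ and power-of-$q$ prefactors accounting for the reweighting of the axis lozenges. So the first step is to make this substitution precise at the level of tilings: set up a weight-preserving bijection (the identity on tilings) between tilings of $P_{n,x}$ with a shifted/specialized weight and tilings of $P'_{n,x}$, keeping careful track of how many vertical lozenges lie in each vertical column and, in particular, how many lie on the $j$-axis in a generic tiling.

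Concretely, I would first observe that in $P'_{n,x}$ the column of vertical lozenges intersected by the $j$-axis is \emph{forced} — there is exactly one such lozenge in each tiling along each of the relevant rows, or more precisely the number of axis-lozenges is a tiling-independent constant $c(n,x)$ — so by the forced-lozenge reduction (the displayed identity in Section \ref{Sec:Pre}) removing that column multiplies $\M$ by $2^{c(n,x)}$ and leaves a region whose remaining vertical lozenges all carry the weight $\frac{q^{\pm(i-1/2)}}{2}+\cdots$ pattern. Then I would apply the same forced-lozenge/column bookkeeping to $P_{n,x}$ under the specialization of the weight that matches, and compare. The net effect is that
\[
\M(P'_{n,x}) = 2^{a}\, q^{b}\, \M(P_{n,x})\big|_{x \to x - 1/2}
\]
for explicit integers $a=a(n,x)$, $b=b(n,x)$ read off from the column counts; substituting the closed form of Lemma \ref{proctorlem} and simplifying the $q^2$-integers (noting $[2(x'+i)]_{q^2}$ with $x'=x-\tfrac12$ becomes $[2(x+i)-1]_{q^2}$, and $[2(2x'+i+j)]_{q^2} = [2(2x+i+j-1)]_{q^2}$, while the $[2(j-i)]_{q^2}$ and $[2k-1]_{q^2}!$ factors are untouched) yields exactly \eqref{hheq2}, provided the prefactor exponents match. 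So the remaining task is the arithmetic: verify that $a$ combines with the $2^{-n^2}$ of \eqref{hheq1} to give the $2^{-n^2}$ of \eqref{hheq2} (here $a$ should be $0$ on the nose, since both formulas have the same power $2^{-n^2}$ — the $2^{1/2}$'s from the half-integer weights must cancel the $2^{c}$ from the forced column), and that $b$ plus $-\sum(2i-1)(2x+i)\big|_{x\to x-1/2} = -\sum(2i-1)(2x+i-\tfrac12)$ equals $-\sum(2i-1)(2x+i-1)$, i.e. $b = -\tfrac12\sum_{i=1}^n(2i-1) = -\tfrac{n^2}{2}$, which should again be supplied precisely by the $q$-weights of the forced axis column.

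Alternatively, if the bijective bookkeeping proves delicate, one can instead prove Lemma \ref{proctorlem2} by the same induction scheme that presumably establishes Lemma \ref{proctorlem}: use the region-splitting Lemma \ref{RS} to peel off forced lozenges along the boundary, apply one of the two Kuo condensation identities (Lemma \ref{Kuolem1} or Lemma \ref{Kuolem2}) to relate $\M(P'_{n,x})$ to $\M(P'_{n-1,x})$, $\M(P'_{n-1,x+1})$ and smaller pieces, and check that the claimed product formula satisfies the resulting recurrence together with the base case $n=0$ (where $P'_{0,x}$ is degenerate and $\M=1$). The main obstacle either way is the same: correctly accounting for the half-unit shift of the $j$-axis, i.e. pinning down the exact powers of $2$ and of $q$ contributed by the reweighted vertical lozenges (especially the $1/2$-weighted axis column), so that the prefactors in \eqref{hheq2} come out matching \eqref{hheq1} after the $x \mapsto x-\tfrac12$ substitution. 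I expect the cleanest route is the first one — derive \eqref{hheq2} from \eqref{hheq1} by the reciprocity-style substitution plus a forced-column computation — and to present the Kuo-condensation induction only as a fallback consistency check.
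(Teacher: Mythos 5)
Your primary route has a genuine gap. The substitution $x\mapsto x-\tfrac12$ relating \eqref{hheq1} to \eqref{hheq2} is a property of the closed-form answers, not of the regions, and it cannot be obtained by a weight-preserving comparison of tilings. Concretely: the forced-column step is fine (each bump of the zigzag is a unit triangle with two boundary edges, so the axis column is forced and contributes a clean $2^{-n}$ in $P'_{n,x}$ and $\bigl(\tfrac{q+q^{-1}}{2}\bigr)^{n}$ in $P_{n,x}$), but after stripping it the two regions have identical shapes and genuinely incompatible weights: a vertical lozenge whose center has $\wt_1$-coordinate $i\geq 2$ carries weight $\tfrac{q^{i}+q^{-i}}{2}$ in $P_{n,x}$ and $\tfrac{q^{i-1}+q^{-(i-1)}}{2}$ in $P'_{n,x}$, and the ratio of these depends on $i$. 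Since the distribution of vertical lozenges among columns is not tiling-independent (already for $n=1$ the unique vertical lozenge of a tiling of the $x,1,1,x,1,1$ hexagon can sit in any of $x$ columns), there is no tiling-by-tiling identity, no ``specialization of the weight that matches,'' and no region $P_{n,x-1/2}$ to which Lemma \ref{proctorlem} could be applied. Indeed, Remark \ref{reciprocity} of the paper makes exactly this point: the half-integer reciprocity is observed only \emph{after} both generating functions have been computed, and a direct explanation not requiring the computation is stated as open. (A further sign the bookkeeping was not carried out: substituting $x\mapsto x-\tfrac12$ into the exponent of \eqref{hheq1} sends $2x+i$ to $2x+i-1$, not to $2x+i-\tfrac12$, so the correction you compute as $b=-n^2/2$ is actually $b=0$; the substitution reproduces \eqref{hheq2} on the nose with no prefactor adjustments.)

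Your fallback route is the paper's actual proof: Lemma \ref{proctorlem2} is proved ``in the same manner'' as Lemma \ref{proctorlem}, i.e., by induction on $n+x$ with base cases $n=0$, $n=1$, $x=0$, applying Kuo condensation (Lemma \ref{Kuolem1}) with the four vertices at the right corners to obtain $\M(P'_{n,x})\M(P'_{n-2,x})=\wt(l_0)\,\M(P'_{n-1,x})^{2}+\M(P'_{n-2,x+1})\M(P'_{n,x-1})$, where now $\wt(l_0)=\tfrac{q^{2x+n-1}+q^{-(2x+n-1)}}{2}$ (the forced lozenges created at the right corners are away from the axis, so the cancellation of the $W_i$'s goes through unchanged), and then checking that the right-hand side of \eqref{hheq2} satisfies this recurrence. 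Since you present this only as an unexecuted consistency check, the proposal as written does not contain a proof; to repair it, promote the fallback to the main argument and carry out the base cases and the recurrence verification.
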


Next, we prove Lemma \ref{proctorlem} (Lemma \ref{proctorlem2} can be proven in the same manner).

\begin{figure}\centering
\includegraphics[width=7cm]{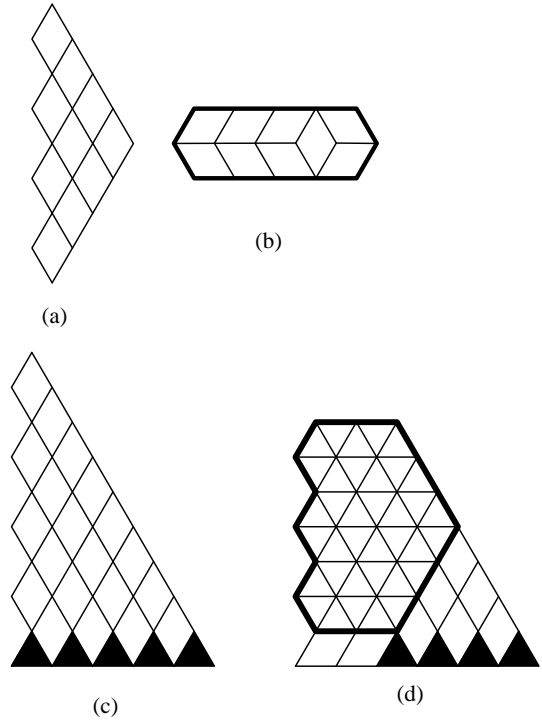}
\caption{Several special halved hexagons and quartered hexagons.}\label{Fig:WQspecial}
\end{figure}

\begin{figure}\centering
\includegraphics[width=10cm]{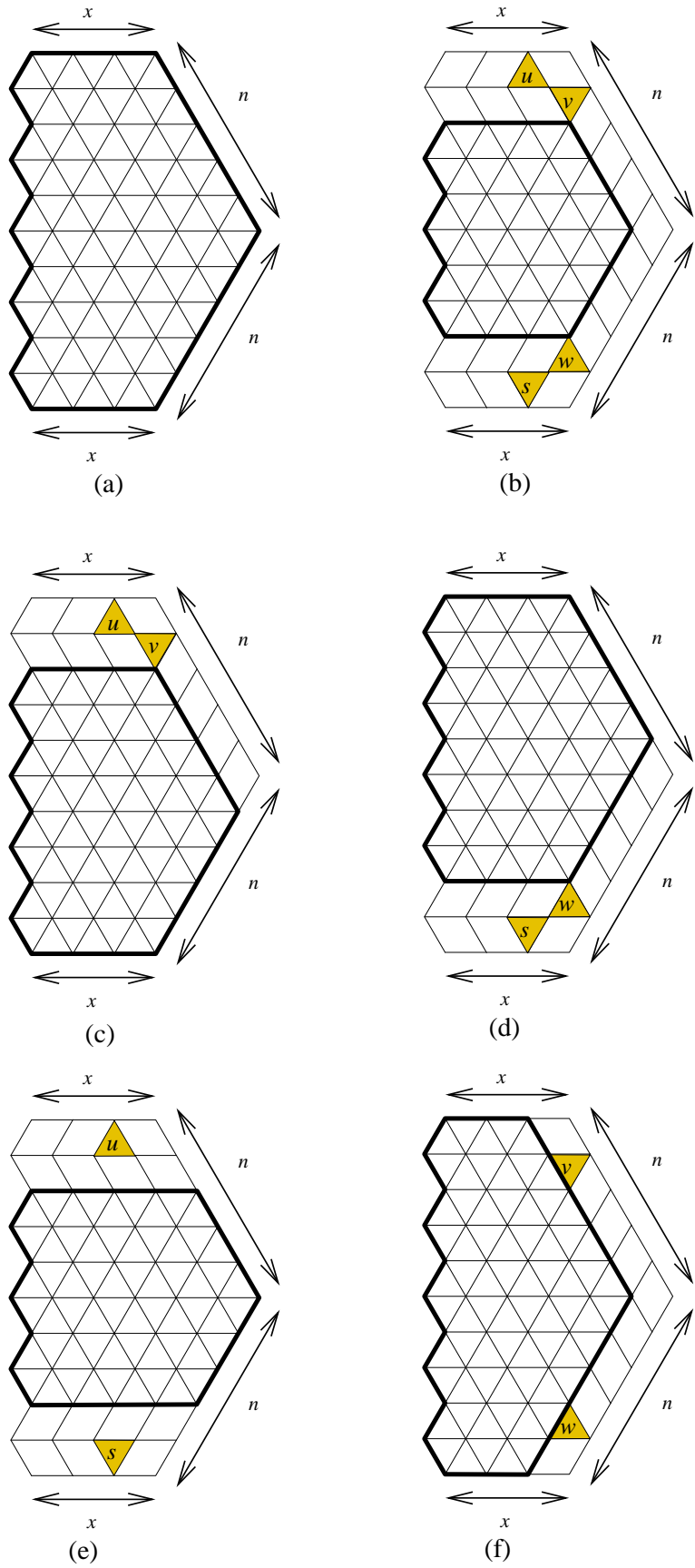}
\caption{Using Kuo condensation to obtain the recurrence for the tiling generating functions of halved hexagons.}\label{Fig:halved2}
\end{figure}

\begin{proof}[Proof of Lemma \ref{proctorlem}]
We prove this lemma by induction on $n+x$.  The base cases are the situations in which $n=0$, $n=1$, or $x=0$.

When $x=0$, the region has only one tiling (see Figure \ref{Fig:WQspecial}(a)). It is easy to verify our identity in this case.  When $n=0$, then the region is degenerate and our identity becomes ``1=1."   If $n=1$, then our region becomes a  hexagon of side-lengths $x,1,1,x,1,1$. It is easy to see that the hexagon has exactly $x$ tilings, each consists of one vertical  lozenge, $x$ left-tilted lozenges, and $x$ right-tilted lozenges (illustrated in Figure \ref{Fig:WQspecial}(b)). It is easy to calculate the TGF in this case, and then verify the identity.

For the induction step, we assume that $x>0, n>1$ and that the theorem holds for  any halved hexagon in which the sum of the $x$- and $n$-parameters is strictly than less than $x+n$. Apply Kuo condensation as in Lemma \ref{Kuolem1} to the dual graph $G$ of  $P_{n,x}$ with the four vertices $u,v,w,$ and $s$ chosen as in Figure \ref{Fig:halved2}(b). More precisely, the vertices $u,v,w,s$ of $G$ are indicated by the corresponding  unit triangles in the halved hexagon. The $u$- and $v$-triangles are the shaded ones in the upper-right corner of the region, and the $w$- and $s$-triangles are the shaded ones in the lower-right corner. Let us consider  the  region corresponding to the graph $G-\{u,v,w,s\}$.  The removal of the four shaded unit triangles produces some forced lozenges as in Figure \ref{Fig:halved2}(b). By removing these forced lozenges, we get a new and smaller halved hexagon, namely $P_{n-2,x}$ (see the region restricted by the bold contour). This implies that
\begin{equation}
\M(G-\{u,v,w,s\})=W_1\cdot\M(P_{n-2,x}),
\end{equation}
where $W_1$ is the product of the weights of the forced lozenges.
Considering the removal of forced lozenges as in Figure  \ref{Fig:halved2}(c)--(f), we get 
 \begin{equation}
\M(G-\{u,v\})=W_2\cdot\M(P_{n-1,x}),
\end{equation}
 \begin{equation}
\M(G-\{w,s\})=W_3\cdot\M(P_{n-1,x}),
\end{equation}
 \begin{equation}
\M(G-\{u,s\})=W_4\cdot\M(P_{n-2,x+1}),
\end{equation}
 \begin{equation}
\M(G-\{v,w\})=W_5\cdot\M(P_{n,x-1}), 
\end{equation}
where the $W_2, W_3, W_4,$ and $W_5$ are the products of the weights of the forced lozenges in their respective regions. Plugging  the above five equations into the recurrence in Lemma \ref{Kuolem1}, we get a recurrence for the TGFs of halved hexagons:
\begin{align}\label{hhrecurrence}
W_1 \cdot \M(P_{n,x})\M(P_{n-2,x})=W_2W_3&\M(P_{n-1,x})\M(P_{n-1,x})\notag\\
&+W_4W_5\cdot\M(P_{n-2,x+1})\M(P_{n,x-1}).
\end{align}

 We now carefully investigate the forced lozenges. We can see that only the vertical ones contribute to the factors $W_i$, for $i=1,\dots,5$ (as the others have weight $1$). First observe that $W_4=1$. Moreover, each weight of a vertical forced lozenge (except for the rightmost one) appears exactly one in each of $W_1$, $W_2\cdot W_3$, and $W_5$. The rightmost vertical forced lozenge, which we denote by $l_0$, appears exactly once in $W_1$ and $W_5$, but twice in the product $W_2\cdot W_3$. Therefore
\begin{equation}
\frac{W_2\cdot W_3}{W_1}=\wt(l_0)
\end{equation}
and
\begin{equation}
\frac{W_4\cdot W_5}{W_1}=\frac{W_5}{W_1}=1,
\end{equation}
where $\wt(l_0)$ is the weight of the lozenge $l_0$.
By definition, we have $\wt(l_0)=\frac{q^{2x+n}+q^{-(2x+n)}}{2}$. The recurrence (\ref{hhrecurrence}) now reduces to
\begin{align}
\M(P_{n,x})\M(P_{n-2,x})=\frac{q^{2x+n}+q^{-2x-n}}{2}\M(P_{n-1,x})\M(P_{n-1,x})+\M(P_{n-2,x+1})\M(P_{n,x-1}).
\end{align}
It is easy to see that the statistic $n+x$ of the last five regions in the above recurrence are all less than that of the first one. It is routine to check that the expression on the right-hand side of  (\ref{hheq1}) satisfies this recurrence, and our theorem follows from the induction principle.
\end{proof}

\begin{proof}[Proof of Theorem \ref{quartthm1}]
Assume that $l$ is the largest index such that there is not a dent at the position $s_{l}-1$  on the base of the quartered hexagon $R^{1}_{x}(s_1,\dots,s_n)$, where $s_{n+1}=n+x+1$ by convention. It means that $n-l+1$ is the size of the maximal cluster of dents attaching to the lower-right corner of the region. 
We now prove (\ref{quarterformula1}) by induction on the statistic $p:=x+n+l$. The base cases are the situations in which $n=0$, $x=0$, or $l=1$.

When $n=0$, our region is a degenerate one. By convention, the TGF is taken to be $1$, and our identity becomes ``1=1".
When $x=0$, the region only has one tiling consisting of all vertical lozenges (see Figure \ref{Fig:WQspecial}(c)). The weight of this tiling is exactly the TGF of the region. It is easy to verify that  (\ref{quarterformula1}) holds in this case.

When $l=1$, all dents are contiguous and form a large cluster of size $n$ attached to the lower-right corner of the region. By removing forced lozenges, we obtained the halved hexagon $P_{x,n-1}$ (see Figure \ref{Fig:WQspecial}(d)). Then  (\ref{quarterformula1})  follows from Lemma \ref{proctorlem}.

\medskip

For the induction step we assume that $x,n>0$ and $l>1$ and that (\ref{quarterformula1}) holds for any region with the $p$-statistic strictly less than $x+n+l$.

If $l=n+1$, then we can remove forced lozenges with weight $1$ along the right side of the region to obtain a ``smaller" region. Here, we say that the $R^{1}$-type region $A$ is ``smaller'' than a region $B$ of the same type if the $p$-statistic in $A$ is less than that in $B$. Then (\ref{quarterformula1})  follows from the induction hypothesis. 

In the rest of the proof, we assume that $l\leq n$. We now apply Kuo condensation from Lemma \ref{Kuolem1}  to the dual graph $G$ of the region $R$ obtained from $R^{1}_{x}(s_1,\dots,s_n)$ by filling the leftmost dent ($s_1$) with a unit triangle. The vertices $u,v,w,s$ of $G$ are indicated by the corresponding unit triangles of $R$ in Figure \ref{Fig:WQKuo1}(a). In particular, the $u$-triangle is the up-pointing unit triangle at position $\alpha=s_{l}-1$, the $v$-triangle is at position $s_1$, and the $w$- and $s$-triangles are at the upper-right corner of the region.

\begin{figure}\centering
\setlength{\unitlength}{3947sp}%
\begingroup\makeatletter\ifx\SetFigFont\undefined%
\gdef\SetFigFont#1#2#3#4#5{%
  \reset@font\fontsize{#1}{#2pt}%
  \fontfamily{#3}\fontseries{#4}\fontshape{#5}%
  \selectfont}%
\fi\endgroup%
\resizebox{!}{12cm}{
\begin{picture}(0,0)%
\includegraphics{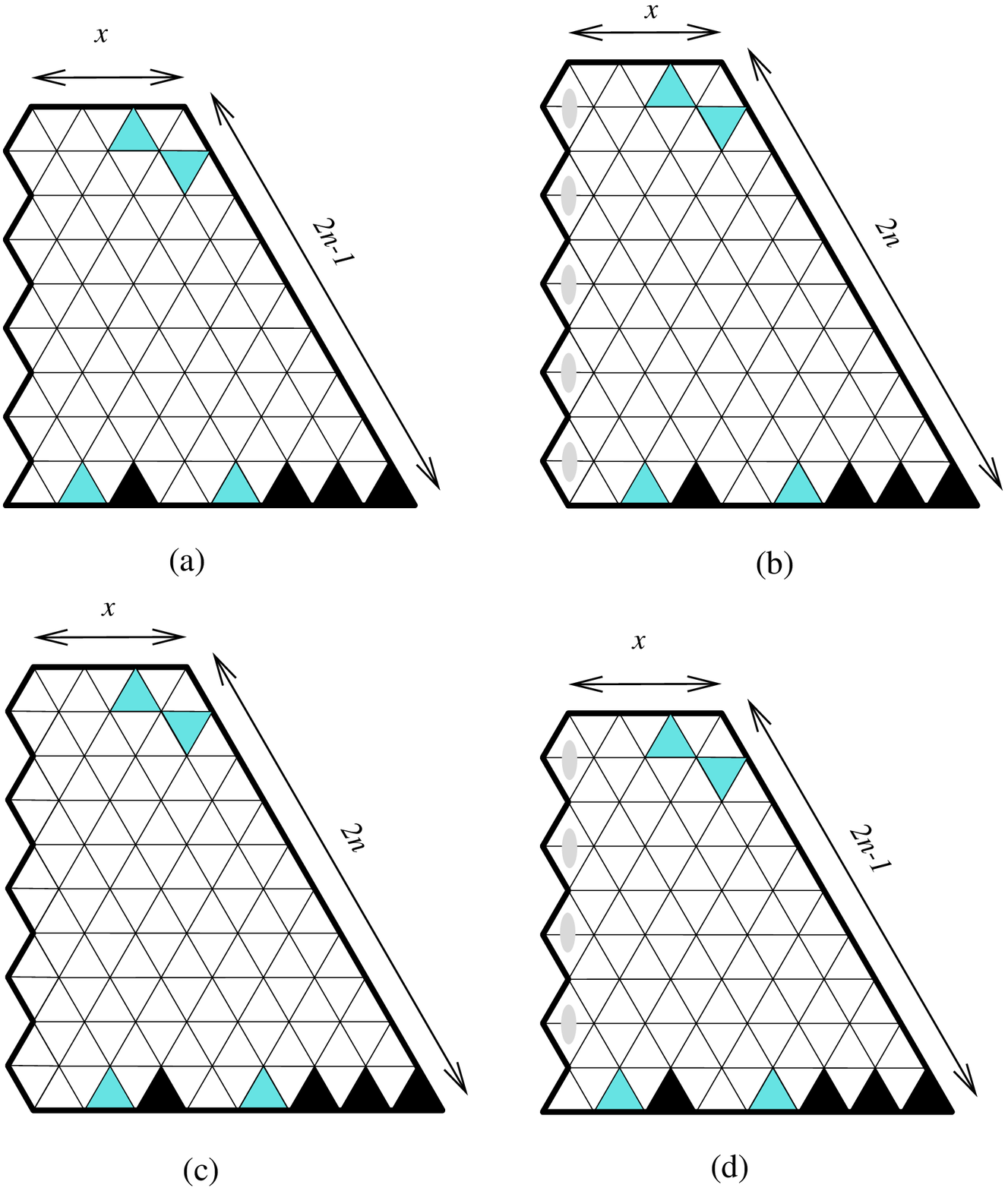}%
\end{picture}%
%
%

\begin{picture}(8044,9640)(2615,-8790)
\put(7681,-3249){\makebox(0,0)[lb]{\smash{{\SetFigFont{14}{16.8}{\rmdefault}{\mddefault}{\itdefault}{\color[rgb]{0,0,0}$v$}%
}}}}
\put(3582,-399){\makebox(0,0)[lb]{\smash{{\SetFigFont{14}{16.8}{\rmdefault}{\mddefault}{\itdefault}{\color[rgb]{0,0,0}$w$}%
}}}}
\put(4014,-659){\makebox(0,0)[lb]{\smash{{\SetFigFont{14}{16.8}{\rmdefault}{\mddefault}{\itdefault}{\color[rgb]{0,0,0}$s$}%
}}}}
\put(7861,-54){\makebox(0,0)[lb]{\smash{{\SetFigFont{14}{16.8}{\rmdefault}{\mddefault}{\itdefault}{\color[rgb]{0,0,0}$w$}%
}}}}
\put(8311,-306){\makebox(0,0)[lb]{\smash{{\SetFigFont{14}{16.8}{\rmdefault}{\mddefault}{\itdefault}{\color[rgb]{0,0,0}$s$}%
}}}}
\put(8311,-5512){\makebox(0,0)[lb]{\smash{{\SetFigFont{14}{16.8}{\rmdefault}{\mddefault}{\itdefault}{\color[rgb]{0,0,0}$s$}%
}}}}
\put(3402,-8087){\makebox(0,0)[lb]{\smash{{\SetFigFont{14}{16.8}{\rmdefault}{\mddefault}{\itdefault}{\color[rgb]{0,0,0}$v$}%
}}}}
\put(3582,-4892){\makebox(0,0)[lb]{\smash{{\SetFigFont{14}{16.8}{\rmdefault}{\mddefault}{\itdefault}{\color[rgb]{0,0,0}$w$}%
}}}}
\put(4032,-5144){\makebox(0,0)[lb]{\smash{{\SetFigFont{14}{16.8}{\rmdefault}{\mddefault}{\itdefault}{\color[rgb]{0,0,0}$s$}%
}}}}
\put(8693,-8092){\makebox(0,0)[lb]{\smash{{\SetFigFont{14}{16.8}{\rmdefault}{\mddefault}{\itdefault}{\color[rgb]{0,0,0}$u$}%
}}}}
\put(8896,-3241){\makebox(0,0)[lb]{\smash{{\SetFigFont{14}{16.8}{\rmdefault}{\mddefault}{\itdefault}{\color[rgb]{0,0,0}$u$}%
}}}}
\put(7478,-8100){\makebox(0,0)[lb]{\smash{{\SetFigFont{14}{16.8}{\rmdefault}{\mddefault}{\itdefault}{\color[rgb]{0,0,0}$v$}%
}}}}
\put(7879,-5252){\makebox(0,0)[lb]{\smash{{\SetFigFont{14}{16.8}{\rmdefault}{\mddefault}{\itdefault}{\color[rgb]{0,0,0}$w$}%
}}}}
\put(4396,-3239){\makebox(0,0)[lb]{\smash{{\SetFigFont{14}{16.8}{\rmdefault}{\mddefault}{\itdefault}{\color[rgb]{0,0,0}$u$}%
}}}}
\put(3181,-3247){\makebox(0,0)[lb]{\smash{{\SetFigFont{14}{16.8}{\rmdefault}{\mddefault}{\itdefault}{\color[rgb]{0,0,0}$v$}%
}}}}
\put(4617,-8079){\makebox(0,0)[lb]{\smash{{\SetFigFont{14}{16.8}{\rmdefault}{\mddefault}{\itdefault}{\color[rgb]{0,0,0}$u$}%
}}}}
\put(3106,-3599){\makebox(0,0)[lb]{\smash{{\SetFigFont{12}{14.4}{\rmdefault}{\mddefault}{\itdefault}{\color[rgb]{0,0,0}$s_1$}%
}}}}
\put(3791,-8067){\makebox(0,0)[lb]{\smash{{\SetFigFont{12}{14.4}{\rmdefault}{\mddefault}{\itdefault}{\color[rgb]{1,1,1}$s_2$}%
}}}}
\put(4999,-8052){\makebox(0,0)[lb]{\smash{{\SetFigFont{12}{14.4}{\rmdefault}{\mddefault}{\itdefault}{\color[rgb]{1,1,1}$s_3$}%
}}}}
\put(5396,-8067){\makebox(0,0)[lb]{\smash{{\SetFigFont{12}{14.4}{\rmdefault}{\mddefault}{\itdefault}{\color[rgb]{1,1,1}$s_4$}%
}}}}
\put(5801,-8075){\makebox(0,0)[lb]{\smash{{\SetFigFont{12}{14.4}{\rmdefault}{\mddefault}{\itdefault}{\color[rgb]{1,1,1}$s_5$}%
}}}}
\put(7606,-3601){\makebox(0,0)[lb]{\smash{{\SetFigFont{12}{14.4}{\rmdefault}{\mddefault}{\itdefault}{\color[rgb]{0,0,0}$s_1$}%
}}}}
\put(8070,-3229){\makebox(0,0)[lb]{\smash{{\SetFigFont{12}{14.4}{\rmdefault}{\mddefault}{\itdefault}{\color[rgb]{1,1,1}$s_2$}%
}}}}
\put(9278,-3214){\makebox(0,0)[lb]{\smash{{\SetFigFont{12}{14.4}{\rmdefault}{\mddefault}{\itdefault}{\color[rgb]{1,1,1}$s_3$}%
}}}}
\put(9675,-3229){\makebox(0,0)[lb]{\smash{{\SetFigFont{12}{14.4}{\rmdefault}{\mddefault}{\itdefault}{\color[rgb]{1,1,1}$s_4$}%
}}}}
\put(3570,-3227){\makebox(0,0)[lb]{\smash{{\SetFigFont{12}{14.4}{\rmdefault}{\mddefault}{\itdefault}{\color[rgb]{1,1,1}$s_2$}%
}}}}
\put(4778,-3212){\makebox(0,0)[lb]{\smash{{\SetFigFont{12}{14.4}{\rmdefault}{\mddefault}{\itdefault}{\color[rgb]{1,1,1}$s_3$}%
}}}}
\put(10080,-3237){\makebox(0,0)[lb]{\smash{{\SetFigFont{12}{14.4}{\rmdefault}{\mddefault}{\itdefault}{\color[rgb]{1,1,1}$s_5$}%
}}}}
\put(5175,-3227){\makebox(0,0)[lb]{\smash{{\SetFigFont{12}{14.4}{\rmdefault}{\mddefault}{\itdefault}{\color[rgb]{1,1,1}$s_4$}%
}}}}
\put(7403,-8452){\makebox(0,0)[lb]{\smash{{\SetFigFont{12}{14.4}{\rmdefault}{\mddefault}{\itdefault}{\color[rgb]{0,0,0}$s_1$}%
}}}}
\put(7867,-8080){\makebox(0,0)[lb]{\smash{{\SetFigFont{12}{14.4}{\rmdefault}{\mddefault}{\itdefault}{\color[rgb]{1,1,1}$s_2$}%
}}}}
\put(9075,-8065){\makebox(0,0)[lb]{\smash{{\SetFigFont{12}{14.4}{\rmdefault}{\mddefault}{\itdefault}{\color[rgb]{1,1,1}$s_3$}%
}}}}
\put(9472,-8080){\makebox(0,0)[lb]{\smash{{\SetFigFont{12}{14.4}{\rmdefault}{\mddefault}{\itdefault}{\color[rgb]{1,1,1}$s_4$}%
}}}}
\put(5580,-3235){\makebox(0,0)[lb]{\smash{{\SetFigFont{12}{14.4}{\rmdefault}{\mddefault}{\itdefault}{\color[rgb]{1,1,1}$s_5$}%
}}}}
\put(9877,-8088){\makebox(0,0)[lb]{\smash{{\SetFigFont{12}{14.4}{\rmdefault}{\mddefault}{\itdefault}{\color[rgb]{1,1,1}$s_5$}%
}}}}
\put(3327,-8439){\makebox(0,0)[lb]{\smash{{\SetFigFont{12}{14.4}{\rmdefault}{\mddefault}{\itdefault}{\color[rgb]{0,0,0}$s_1$}%
}}}}
\end{picture}}
\caption{An application of Kuo condensation to the quartered hexagons.}\label{Fig:WQKuo1}
\end{figure}

\begin{figure}\centering
\setlength{\unitlength}{3947sp}%
\begingroup\makeatletter\ifx\SetFigFont\undefined%
\gdef\SetFigFont#1#2#3#4#5{%
  \reset@font\fontsize{#1}{#2pt}%
  \fontfamily{#3}\fontseries{#4}\fontshape{#5}%
  \selectfont}%
\fi\endgroup%
\resizebox{!}{16cm}{
\begin{picture}(0,0)%
\includegraphics{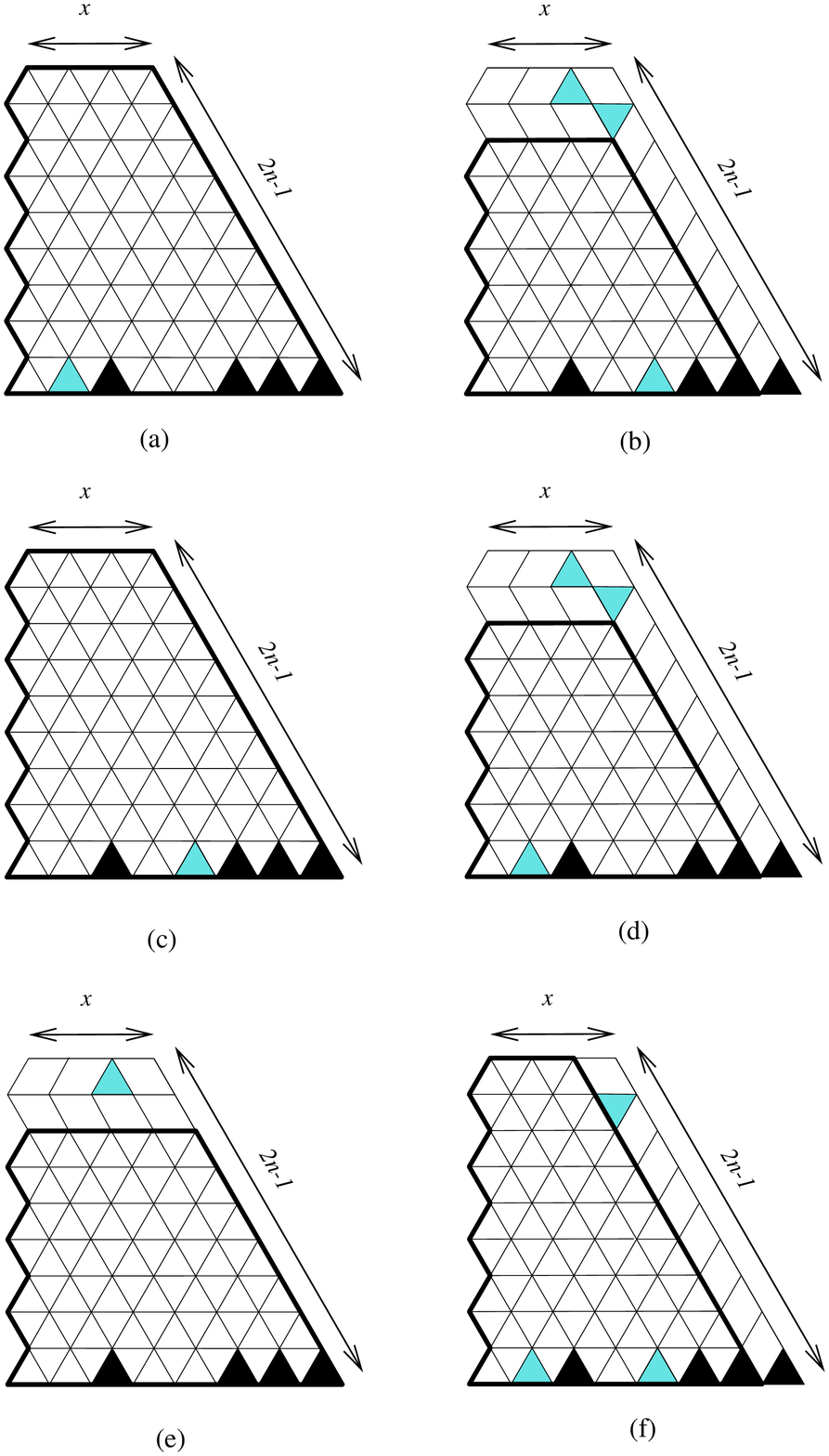}%
\end{picture}%
%
%

\begin{picture}(8072,14353)(2616,-13695)
\put(8514,-661){\makebox(0,0)[lb]{\smash{{\SetFigFont{14}{16.8}{\rmdefault}{\mddefault}{\itdefault}{\color[rgb]{0,0,0}$s$}%
}}}}
\put(8925,-12927){\makebox(0,0)[lb]{\smash{{\SetFigFont{14}{16.8}{\rmdefault}{\mddefault}{\itdefault}{\color[rgb]{0,0,0}$u$}%
}}}}
\put(8543,-10347){\makebox(0,0)[lb]{\smash{{\SetFigFont{14}{16.8}{\rmdefault}{\mddefault}{\itdefault}{\color[rgb]{0,0,0}$s$}%
}}}}
\put(7710,-12935){\makebox(0,0)[lb]{\smash{{\SetFigFont{14}{16.8}{\rmdefault}{\mddefault}{\itdefault}{\color[rgb]{0,0,0}$v$}%
}}}}
\put(3179,-3237){\makebox(0,0)[lb]{\smash{{\SetFigFont{14}{16.8}{\rmdefault}{\mddefault}{\itdefault}{\color[rgb]{0,0,0}$v$}%
}}}}

\put(3596,-10091){\makebox(0,0)[lb]{\smash{{\SetFigFont{14}{16.8}{\rmdefault}{\mddefault}{\itdefault}{\color[rgb]{0,0,0}$w$}%
}}}}
\put(4403,-7968){\makebox(0,0)[lb]{\smash{{\SetFigFont{14}{16.8}{\rmdefault}{\mddefault}{\itdefault}{\color[rgb]{0,0,0}$u$}%
}}}}
\put(8520,-5384){\makebox(0,0)[lb]{\smash{{\SetFigFont{14}{16.8}{\rmdefault}{\mddefault}{\itdefault}{\color[rgb]{0,0,0}$s$}%
}}}}
\put(7687,-7972){\makebox(0,0)[lb]{\smash{{\SetFigFont{14}{16.8}{\rmdefault}{\mddefault}{\itdefault}{\color[rgb]{0,0,0}$v$}%
}}}}
\put(8088,-5124){\makebox(0,0)[lb]{\smash{{\SetFigFont{14}{16.8}{\rmdefault}{\mddefault}{\itdefault}{\color[rgb]{0,0,0}$w$}%
}}}}
\put(8896,-3241){\makebox(0,0)[lb]{\smash{{\SetFigFont{14}{16.8}{\rmdefault}{\mddefault}{\itdefault}{\color[rgb]{0,0,0}$u$}%
}}}}
\put(8082,-401){\makebox(0,0)[lb]{\smash{{\SetFigFont{14}{16.8}{\rmdefault}{\mddefault}{\itdefault}{\color[rgb]{0,0,0}$w$}%
}}}}
\put(9704,-12915){\makebox(0,0)[lb]{\smash{{\SetFigFont{12}{14.4}{\rmdefault}{\mddefault}{\itdefault}{\color[rgb]{1,1,1}$s_4$}%
}}}}
\put(10109,-12923){\makebox(0,0)[lb]{\smash{{\SetFigFont{12}{14.4}{\rmdefault}{\mddefault}{\itdefault}{\color[rgb]{1,1,1}$s_5$}%
}}}}
\put(3571,-3226){\makebox(0,0)[lb]{\smash{{\SetFigFont{12}{14.4}{\rmdefault}{\mddefault}{\itdefault}{\color[rgb]{1,1,1}$s_2$}%
}}}}
\put(4779,-3211){\makebox(0,0)[lb]{\smash{{\SetFigFont{12}{14.4}{\rmdefault}{\mddefault}{\itdefault}{\color[rgb]{1,1,1}$s_3$}%
}}}}
\put(5176,-3226){\makebox(0,0)[lb]{\smash{{\SetFigFont{12}{14.4}{\rmdefault}{\mddefault}{\itdefault}{\color[rgb]{1,1,1}$s_4$}%
}}}}
\put(5581,-3234){\makebox(0,0)[lb]{\smash{{\SetFigFont{12}{14.4}{\rmdefault}{\mddefault}{\itdefault}{\color[rgb]{1,1,1}$s_5$}%
}}}}
\put(7606,-3601){\makebox(0,0)[lb]{\smash{{\SetFigFont{12}{14.4}{\rmdefault}{\mddefault}{\itdefault}{\color[rgb]{0,0,0}$s_1$}%
}}}}
\put(8070,-3229){\makebox(0,0)[lb]{\smash{{\SetFigFont{12}{14.4}{\rmdefault}{\mddefault}{\itdefault}{\color[rgb]{1,1,1}$s_2$}%
}}}}
\put(9278,-3214){\makebox(0,0)[lb]{\smash{{\SetFigFont{12}{14.4}{\rmdefault}{\mddefault}{\itdefault}{\color[rgb]{1,1,1}$s_3$}%
}}}}
\put(10080,-3237){\makebox(0,0)[lb]{\smash{{\SetFigFont{12}{14.4}{\rmdefault}{\mddefault}{\itdefault}{\color[rgb]{1,1,1}$s_5$}%
}}}}
\put(9675,-3229){\makebox(0,0)[lb]{\smash{{\SetFigFont{12}{14.4}{\rmdefault}{\mddefault}{\itdefault}{\color[rgb]{1,1,1}$s_4$}%
}}}}
\put(7612,-8324){\makebox(0,0)[lb]{\smash{{\SetFigFont{12}{14.4}{\rmdefault}{\mddefault}{\itdefault}{\color[rgb]{0,0,0}$s_1$}%
}}}}
\put(8076,-7952){\makebox(0,0)[lb]{\smash{{\SetFigFont{12}{14.4}{\rmdefault}{\mddefault}{\itdefault}{\color[rgb]{1,1,1}$s_2$}%
}}}}
\put(3113,-8328){\makebox(0,0)[lb]{\smash{{\SetFigFont{12}{14.4}{\rmdefault}{\mddefault}{\itdefault}{\color[rgb]{0,0,0}$s_1$}%
}}}}
\put(3577,-7956){\makebox(0,0)[lb]{\smash{{\SetFigFont{12}{14.4}{\rmdefault}{\mddefault}{\itdefault}{\color[rgb]{1,1,1}$s_2$}%
}}}}
\put(4785,-7941){\makebox(0,0)[lb]{\smash{{\SetFigFont{12}{14.4}{\rmdefault}{\mddefault}{\itdefault}{\color[rgb]{1,1,1}$s_3$}%
}}}}
\put(5182,-7956){\makebox(0,0)[lb]{\smash{{\SetFigFont{12}{14.4}{\rmdefault}{\mddefault}{\itdefault}{\color[rgb]{1,1,1}$s_4$}%
}}}}
\put(5587,-7964){\makebox(0,0)[lb]{\smash{{\SetFigFont{12}{14.4}{\rmdefault}{\mddefault}{\itdefault}{\color[rgb]{1,1,1}$s_5$}%
}}}}
\put(9284,-7937){\makebox(0,0)[lb]{\smash{{\SetFigFont{12}{14.4}{\rmdefault}{\mddefault}{\itdefault}{\color[rgb]{1,1,1}$s_3$}%
}}}}
\put(9681,-7952){\makebox(0,0)[lb]{\smash{{\SetFigFont{12}{14.4}{\rmdefault}{\mddefault}{\itdefault}{\color[rgb]{1,1,1}$s_4$}%
}}}}
\put(3114,-13284){\makebox(0,0)[lb]{\smash{{\SetFigFont{12}{14.4}{\rmdefault}{\mddefault}{\itdefault}{\color[rgb]{0,0,0}$s_1$}%
}}}}
\put(3584,-12919){\makebox(0,0)[lb]{\smash{{\SetFigFont{12}{14.4}{\rmdefault}{\mddefault}{\itdefault}{\color[rgb]{1,1,1}$s_2$}%
}}}}
\put(4792,-12904){\makebox(0,0)[lb]{\smash{{\SetFigFont{12}{14.4}{\rmdefault}{\mddefault}{\itdefault}{\color[rgb]{1,1,1}$s_3$}%
}}}}
\put(5189,-12919){\makebox(0,0)[lb]{\smash{{\SetFigFont{12}{14.4}{\rmdefault}{\mddefault}{\itdefault}{\color[rgb]{1,1,1}$s_4$}%
}}}}
\put(5594,-12927){\makebox(0,0)[lb]{\smash{{\SetFigFont{12}{14.4}{\rmdefault}{\mddefault}{\itdefault}{\color[rgb]{1,1,1}$s_5$}%
}}}}
\put(10086,-7960){\makebox(0,0)[lb]{\smash{{\SetFigFont{12}{14.4}{\rmdefault}{\mddefault}{\itdefault}{\color[rgb]{1,1,1}$s_5$}%
}}}}
\put(7635,-13287){\makebox(0,0)[lb]{\smash{{\SetFigFont{12}{14.4}{\rmdefault}{\mddefault}{\itdefault}{\color[rgb]{0,0,0}$s_1$}%
}}}}
\put(8099,-12915){\makebox(0,0)[lb]{\smash{{\SetFigFont{12}{14.4}{\rmdefault}{\mddefault}{\itdefault}{\color[rgb]{1,1,1}$s_2$}%
}}}}
\put(9307,-12900){\makebox(0,0)[lb]{\smash{{\SetFigFont{12}{14.4}{\rmdefault}{\mddefault}{\itdefault}{\color[rgb]{1,1,1}$s_3$}%
}}}}
\end{picture}}
\caption{Obtaining a recurrence for the TGF of a quartered hexagon.}\label{Fig:WQKuo2}
\end{figure}

The removal of the $u$-, $v$-, $w$- and $v$-triangles yields forced lozenges. By removing these forced lozenges, we get new $R^{1}$-type regions. The recurrence obtained from Kuo condensation tells us  that the product of the TGFs of the two regions in the top row of Figure \ref{Fig:WQKuo2} is equal to the product of the TGFs of the two regions in the middle row, plus the product of the TGFs of the two regions in the bottom row. Each region (after removing forced lozenges) is an $R^{1}$-type region.
We note that the weights of the forced lozenges all cancel out; therefore, we have the following recurrence for the TGFs of the quartered hexagon:
\begin{align}
\M(R^{1}_{x}(\{s_i\}_{i=1}^{n}))\M(R^{1}_{x}(\alpha\{s_i\}_{i=2}^{n-1}))=&\M(R^{1}_{x}(\alpha\{s_i\}_{i=2}^{n}))\M(R^{1}_{x}(\{s_i\}_{i=1}^{n-1}))\notag\\
&+\M(R^{1}_{x+1}(\{s_i\}_{i=2}^{n}))\M(R^{1}_{x-1}(\alpha\{s_i\}_{i=1}^{n-1})).
\end{align}
Here we use the notation $\alpha S$ for the ordered set obtained by including $\alpha$ and reordering the elements.

To finish the proof we verify that the expression on the right-hand side of (\ref{quarterformula1}) satisfies the same recurrence. This verification is  straightforward, but, due to the complexity of the formula, it is  not trivial. For completeness, we briefly show the verification. Denote by $f(\{s_1,s_2,\dots,s_n\})$ 
the expression on the right-hand side of (\ref{quarterformula1}). We would like to show that
\begin{align}\label{quarterformula1b}
\frac{f(\alpha\{s_i\}_{i=2}^{n})f(\{s_i\}_{i=1}^{n-1})}{f(\{s_i\}_{i=1}^{n})f(\alpha\{s_i\}_{i=2}^{n-1})}+\frac{f(\{s_i\}_{i=2}^{n})f(\alpha\{s_i\}_{i=1}^{n-1})}{f(\{s_i\}_{i=1}^{n})f(\alpha\{s_i\}_{i=2}^{n-1})}=1.
\end{align}
By definition, the powers of $2$ and $q$ in the  first term cancel out. Now we can write the first term as
\begin{align}
\frac{\Delta(\alpha\{s_i\}_{i=2}^{n})\cdot \Box(\alpha\{s_i\}_{i=2}^{n})\cdot \Delta(\{s_i\}_{i=1}^{n-1})\cdot \Box(\{s_i\}_{i=1}^{n-1})}{\Delta(\{s_i\}_{i=1}^{n})\cdot \Box(\{s_i\}_{i=1}^{n})\cdot \Delta(\alpha\{s_i\}_{i=2}^{n-1})\cdot\Box(\alpha\{s_i\}_{i=2}^{n-1})},
\end{align}
where $\Delta(S):=\prod_{1\leq i<j\leq n}[2(s_j-s_i)]_{q^2}$ and $\Box(S):=\prod_{1\leq i<j\leq n}[2(s_j+s_i-1)]_{q^2}$ for  an ordered set $S=\{s_1<s_2<\dots<s_n\}$.
By definition, we can simplify
\begin{align}
\frac{\Delta(\alpha\{s_i\}_{i=2}^{n})\Delta(\{s_i\}_{i=1}^{n-1})}{\Delta(\{s_i\}_{i=1}^{n})\Delta(\alpha\{s_i\}_{i=2}^{n-1})}=\frac{[2(s_n-\alpha)]_{q^2}}{[2(s_n-s_1)]_{q^2}}
\end{align}
and
\begin{align}
\frac{\Box(\alpha\{s_i\}_{i=2}^{n})\Box(\{s_i\}_{i=1}^{n-1})}{\Box(\{s_i\}_{i=1}^{n})\Box(\alpha\{s_i\}_{i=2}^{n-1})}=\frac{[2(s_n+\alpha-1)]_{q^2}}{[2(s_n+s_1-1)]_{q^2}}.
\end{align}
The first term now reduces to
\[\frac{[2(s_n-\alpha)]_{q^2}}{[2(s_n-s_1)]_{q^2}} \frac{[2(s_n+\alpha-1)]_{q^2}}{[2(s_n+s_1-1)]_{q^2}}.\]
Similarly, the second term is equal to
\[q^{4(s_n-\alpha)}\frac{[2(\alpha-s_1)]_{q^2}}{[2(s_n-s_1)]_{q^2}} \frac{[2(\alpha+s_1)]_{q^2}}{[2(s_n+s_1-1)]_{q^2}}.\]
The identity (\ref{quarterformula1}) is now equivalent to
\begin{align}
\frac{[2(s_n-\alpha)]_{q^2}}{[2(s_n-s_1)]_{q^2}} \frac{[2(s_n+\alpha-1)]_{q^2}}{[2(s_n+s_1-1)]_{q^2}}+q^{4(s_n-\alpha)}\frac{[2(\alpha-s_1)]_{q^2}}{[2(s_n-s_1)]_{q^2}} \frac{[2(\alpha+s_1)]_{q^2}}{[2(s_n+s_1-1)]_{q^2}}=1,
\end{align}
which is a true identity. This finishes the proof.
\end{proof}
The proofs of Theorem \ref{quartthm2}--\ref{quartthm4} are essentially the same as that of Theorem \ref{quartthm1}. We apply Kuo condensation as in Figures \ref{Fig:WQKuo1}(b)--(d) and appeal to Lemmas \ref{proctorlem} and \ref{proctorlem2} to verify the base cases. We omit these proofs here.

\section{Proofs Theorems \ref{typeAthm} -- \ref{typeTthm}}\label{Sec:Twosidehole}

Let $R$ be a region of type $A,$ $B,$ $C,$ or $D$. Denote by $h=h(R)$ the \emph{height} of $R$, i.e. the sum of side-lengths of the northeast and southeast sides of the region. We have $h=2u$ if $R$ is $A_{x,u}$ or $C_{x,u}$ and $h=2u+1$ if $R$ is $B_{x,u}$ or $D_{x,u}$.

\begin{proof}[Combined proof of Theorems \ref{typeAthm} and \ref{typeBthm}]

We re-write (\ref{TypeAformula}) and (\ref{TypeBformula}) as
\begin{align}\label{Aformula}
\M(A_{x,d}((l_i)_{i=1}^m)&=2^{-E}q^{-F}\frac{\prod_{1\leq i < j \leq m}[2(l_j-l_i)]_{q^2}}{\prod_{i=1}^{m}[2l_i-1]_{q^2}!}\notag\\
&\times \prod_{i=1}^{\lceil m/2\rceil}\prod_{j=1}^{2m-4i+3}[2(x+d-m)+2i+j-1]_{q^2} \notag\\
&\times \prod_{i=1}^{m}\prod_{j=1}^{l_i-i}[2((x+d-m)+i+j)]_{q^2}[2((x+d-m)-i-j+m+1)]_{q^2}
\end{align}
and
\begin{align}\label{Bformula}
\M(B_{x,d}((l_i)_{i=1}^{m})&=2^{-E'}q^{-F'}\frac{\prod_{1\leq i < j \leq m}[2(l_j-l_i)]_{q^2}}{\prod_{i=1}^{m}[2l_i]_{q^2}!}\notag\\
&\times \prod_{i=1}^{m}[2((x+d-m)+n+i)]_{q^2} [2((x+d-m)+i+1)]_{q^2}\notag\\
&\times\prod_{i=1}^{m}\prod_{j=1}^{i-1}[2(x+d-m)+i+j+1]_{q^2}\notag\\
&\times \prod_{i=1}^{m}\prod_{j=1}^{l_i-i}[2((x+d-m)+i+j+1)]_{q^2}[2((x+d-m)-i-j+m+1)]_{q^2},
\end{align}
where
\begin{align}\label{Eexp1b}
E=E(x,0,d,\textbf{l},\emptyset)&=\sum_{i=1}^{m}\left\langle\begin{matrix}
2x+1+2d-m-(m-i+1)\\ 2x+1+2(l_m-l_i)-2(m-i) \end{matrix}\right\rangle
\end{align}
\begin{align}\label{Fexp1b}
F=F(\textbf{l},\emptyset)=\sum_{i=1}^{m}(2l_i-i)
\end{align}
\begin{align}\label{Eexp2b}
E'=E'(x,0,d,\textbf{l},\emptyset)&=m(4x+4d-3m+3)\notag\\
&+\sum_{i=1}^{m}\left\langle\begin{matrix}
2x+1+2d-m-(m-i+1)\\ 2x+1+2(l_m-l_i)-2(m-i)\end{matrix}\right\rangle
\end{align}
\begin{align}\label{Fexp2b}
F'=F'(\textbf{l},\emptyset)=m+\sum_{i=1}^{m}(2l_i-i),
\end{align}
 and $l_0=0$ by convention.

We prove (\ref{Aformula}) and (\ref{Bformula}) at the same time by induction on $h+x$, where $h$ is the height of the region as define above.

The base cases are the situations in which $h=0$, $h=1$, or $m=0$.

When $m=0$ our region has a unique tiling, as shown in Figure \ref{Fig:OnesideBase}(a) for an $A$-type region and Figure \ref{Fig:OnesideBase}(a) for a $B$-type region. The case $h=0$ and $h=1$ reduce to the case $m=0$.

\begin{figure}\centering
\includegraphics[width=13cm]{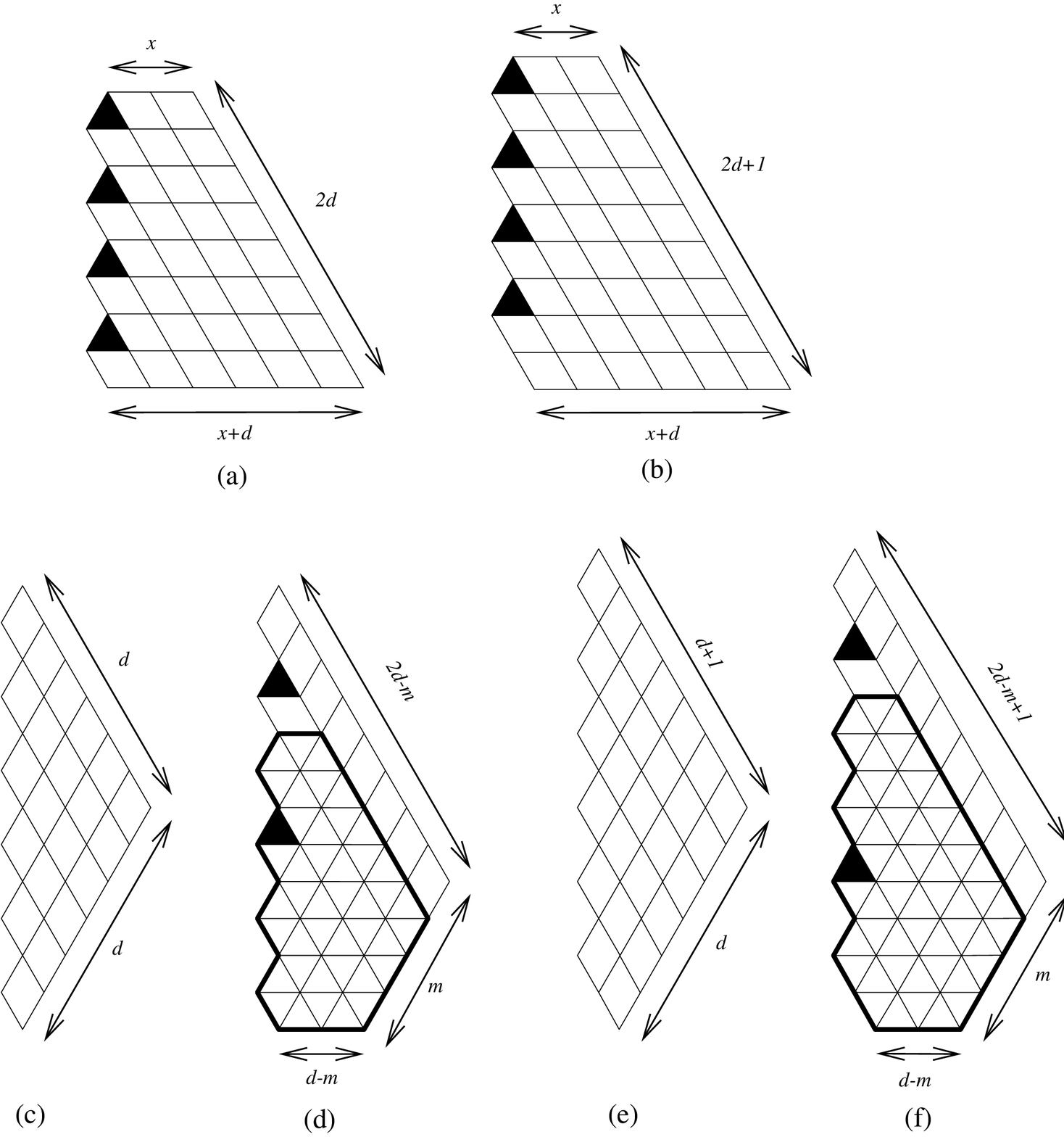}
\caption{Sepcial cases of proof of Theorems \ref{typeAthm} and \ref{typeBthm}.}\label{Fig:OnesideBase}
\end{figure}

\medskip

For the induction step, we assume that $h>1,m>0$ and that  (\ref{Aformula}) and (\ref{Bformula}) hold for all regions of type A or B in which the sum of the $x$- and $h$-parameters is strictly less than $x+h$.

We can assume that $x>0$. Indeed if $x=0$ and $d=m$, then our region has only one tiling consisting of vertical lozenges (see Figure \ref{Fig:OnesideBase}(c) for an $A$-type region, and Figure \ref{Fig:OnesideBase}(e) for a $B$-type region). We can then calculate the weight of this tiling and verify (\ref{Aformula}) and (\ref{Bformula}). If $x=0$ and $d>m$, then our region has some forced lozenges. By removing these lozenges, we get a `smaller'  region of the same type (indicated by the region restricted by the bold contour in Figures \ref{Fig:OnesideBase}(d) and (f), respectively for type $A$ and type $B$ regions).  In the remainder of the proof, we say that a region of type $A$ (or $B$) is ``smaller'' than another region of the same type if the sum of its $x$- and $h$-parameters is less than that of the other region. Therefore (\ref{Aformula}) and (\ref{Bformula}) follow from the induction hypothesis.

We can assume that $d=l_m$ (otherwise, we can remove forced lozenges on the top as in Figure \ref{Fig:Onesidehole2} to get a smaller region; then (\ref{Aformula}) and (\ref{Bformula}) follows from the induction hypothesis).
\begin{figure}\centering
\setlength{\unitlength}{3947sp}%
\begingroup\makeatletter\ifx\SetFigFont\undefined%
\gdef\SetFigFont#1#2#3#4#5{%
  \reset@font\fontsize{#1}{#2pt}%
  \fontfamily{#3}\fontseries{#4}\fontshape{#5}%
  \selectfont}%
\fi\endgroup%
\resizebox{!}{8cm}{
\begin{picture}(0,0)%
\includegraphics{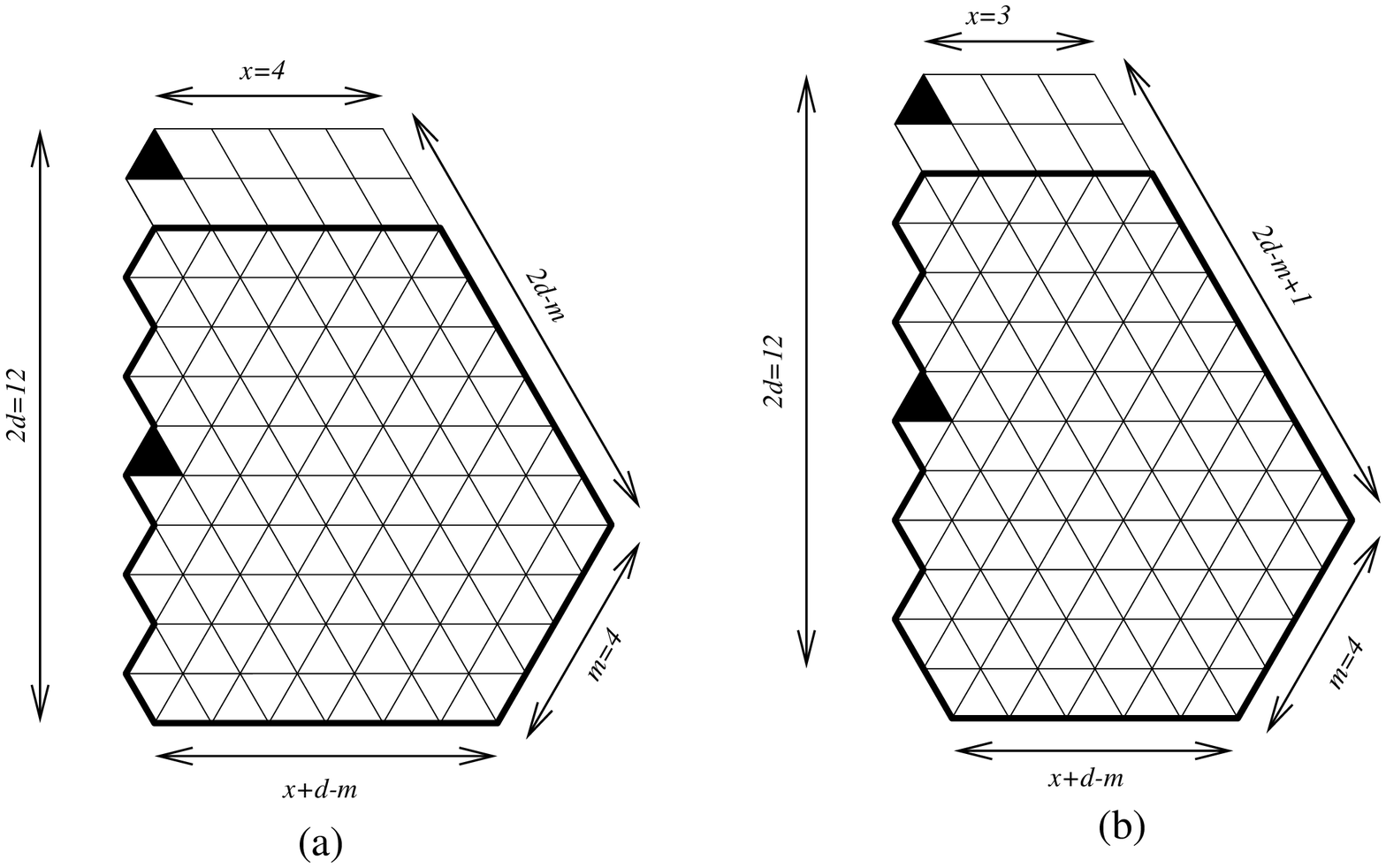}%
\end{picture}%

\begin{picture}(9943,6261)(903,-5404)
\put(1433,-4004){\makebox(0,0)[lb]{\smash{{\SetFigFont{12}{14.4}{\rmdefault}{\mddefault}{\itdefault}{\color[rgb]{0,0,0}$l_1$}%
}}}}
\put(1433,-3295){\makebox(0,0)[lb]{\smash{{\SetFigFont{12}{14.4}{\rmdefault}{\mddefault}{\itdefault}{\color[rgb]{0,0,0}$l_2$}%
}}}}
\put(1433,-1878){\makebox(0,0)[lb]{\smash{{\SetFigFont{12}{14.4}{\rmdefault}{\mddefault}{\itdefault}{\color[rgb]{0,0,0}$l_3$}%
}}}}
\put(1434,-1164){\makebox(0,0)[lb]{\smash{{\SetFigFont{12}{14.4}{\rmdefault}{\mddefault}{\itdefault}{\color[rgb]{0,0,0}$l_4$}%
}}}}
\put(6953,-768){\makebox(0,0)[lb]{\smash{{\SetFigFont{12}{14.4}{\rmdefault}{\mddefault}{\itdefault}{\color[rgb]{0,0,0}$l_4$}%
}}}}
\put(6953,-1480){\makebox(0,0)[lb]{\smash{{\SetFigFont{12}{14.4}{\rmdefault}{\mddefault}{\itdefault}{\color[rgb]{0,0,0}$l_3$}%
}}}}
\put(6938,-2890){\makebox(0,0)[lb]{\smash{{\SetFigFont{12}{14.4}{\rmdefault}{\mddefault}{\itdefault}{\color[rgb]{0,0,0}$l_2$}%
}}}}
\put(6931,-3612){\makebox(0,0)[lb]{\smash{{\SetFigFont{12}{14.4}{\rmdefault}{\mddefault}{\itdefault}{\color[rgb]{0,0,0}$l_1$}%
}}}}
\end{picture}}
\caption{Obtaining smaller regions of type A and type B by removing forced lozenges.} \label{Fig:Onesidehole2}
\end{figure}

\medskip

For the rest of the proof we may now assume that $m,x>0$ and $d=l_m$.

\medskip

We next consider the region $A=A_{x,d}((l_i)_{i=1}^{m})$ with $l_1=1$. 

If $d=m=1$, then the region is simply the hexagon of side-lengths $x,1,1,x,1,1$. This hexagon has exactly $x$ tilings; each tiling consists of one vertical lozenge, $x$ left-tilted lozenges, and $x$ right-tilted lozenges. It is easy to calculate the tiling generating function of the hexagon and verify (\ref{Aformula}) in this case. We now can assume that $d>1$, and as $l_m=d$ and $l_1=1$, we have $m>1$. 

We now apply Kuo condensation from Lemma \ref{Kuolem1} to the dual graph $G$ of the region $A=A_{x,d}((l_i)_{i=1}^{m})$. The four vertices $u,v,w,$ and $s$ are represented by their corresponding triangles in Figure \ref{Fig:Onesidehole3}(b). In particular, the $u$- and $v$-triangles are the shaded unit triangles in the upper-right corner of the region, and the $w$- and $s$-triangles are the shaded ones in the lower-right corner. 


We obtain a recurrence from Kuo condensation (the regions are shown in Figure \ref{Fig:Onesidehole3}):
\begin{align}\label{Arecurrence1}
\M(A_{x,d}((l_i)_{i=1}^{m}))\M(A_{x,d-2}((l_i-1)_{i=2}^{m-1}))&=\frac{W_2\cdot W_3}{W_1}\cdot\M(A_{x,d-1}((l_i)_{i=1}^{m-1}))\M(A_{x,d-1}((l_i-1)_{i=2}^{m}))\notag\\
&+\frac{W_4\cdot W_5}{W1}\cdot\M(A_{x+1,d-2}((l_i-1)_{i=2}^{m-1}))\M(A_{x-1,d}((l_i)_{i=1}^{m})),
\end{align}
where $W_1,\dots,W_5$ are the products of weights of forced lozenges in figures (b)--(f), respectively.

As in the proof of Lemma \ref{proctorlem}, we have
\begin{equation}
\frac{W_2\cdot W_3}{W_1}=\wt(l_0)
\end{equation}
and
\begin{equation}
\frac{W_4\cdot W_5}{W_1}=\frac{W_5}{W_1}=1,
\end{equation}
where $\wt(l_0)$ is the weight of the vertical lozenge $l_0$ at the right-most corner of the region.
By definition, we have $\wt(l_0)=\frac{q^{2x+2d-m}+q^{-(2x+2d-m)}}{2}$. Therefore, our recurrence above can be rewritten as:
\begin{align}\label{Arecurrence2}
\M(A_{x,d}((l_i)_{i=1}^{m}))&\M(A_{x,d-2}((l_i-1)_{i=2}^{m-1}))&\notag\\
=&\frac{q^{2x+2d-m}+q^{-(2x+2d-m)}}{2}\cdot\M(A_{x,d-1}((l_i)_{i=1}^{m-1}))\M(A_{x,d-1}((l_i-1)_{i=2}^{m}))\notag\\
&+\M(A_{x+1,d-2}((l_i-1)_{i=2}^{m-1}))\M(A_{x-1,d}((l_i)_{i=1}^{m})).
\end{align}
One can verify that the sum of the $h$- and $x$-parameters in the last five regions are all strictly less than that in the first one.

\begin{figure}\centering
\setlength{\unitlength}{3947sp}%
\begingroup\makeatletter\ifx\SetFigFont\undefined%
\gdef\SetFigFont#1#2#3#4#5{%
  \reset@font\fontsize{#1}{#2pt}%
  \fontfamily{#3}\fontseries{#4}\fontshape{#5}%
  \selectfont}%
\fi\endgroup%
\resizebox{!}{22cm}{
\begin{picture}(0,0)%
\includegraphics{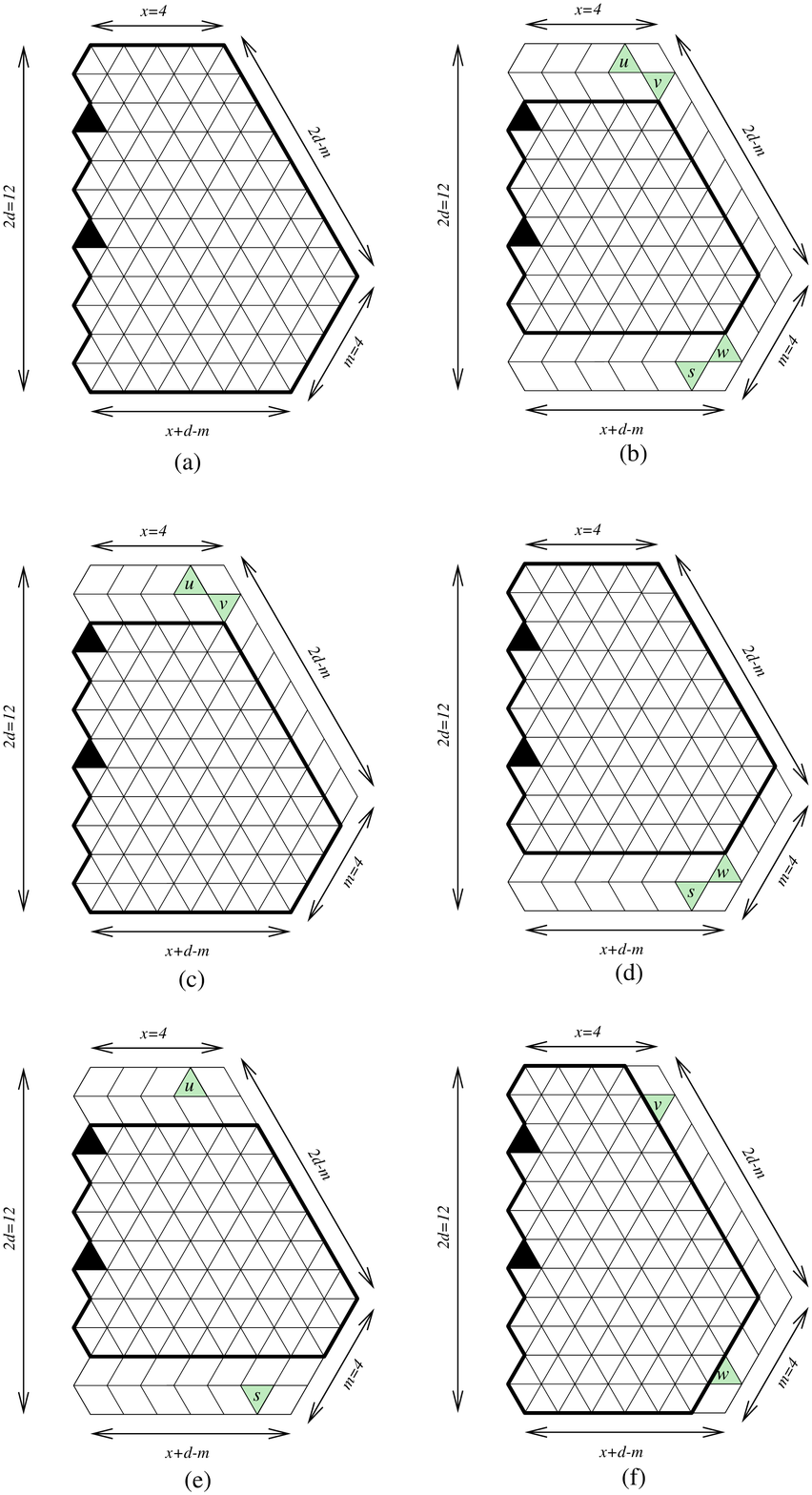}%
\end{picture}
\begin{picture}(9964,18357)(902,-17877)
\put(1433,-4004){\makebox(0,0)[lb]{\smash{{\SetFigFont{12}{14.4}{\rmdefault}{\mddefault}{\itdefault}{\color[rgb]{0,0,0}$l_1$}%
}}}}
\put(1433,-3295){\makebox(0,0)[lb]{\smash{{\SetFigFont{12}{14.4}{\rmdefault}{\mddefault}{\itdefault}{\color[rgb]{0,0,0}$l_2$}%
}}}}
\put(1433,-1878){\makebox(0,0)[lb]{\smash{{\SetFigFont{12}{14.4}{\rmdefault}{\mddefault}{\itdefault}{\color[rgb]{0,0,0}$l_3$}%
}}}}
\put(1433,-460){\makebox(0,0)[lb]{\smash{{\SetFigFont{12}{14.4}{\rmdefault}{\mddefault}{\itdefault}{\color[rgb]{0,0,0}$l_4$}%
}}}}
\put(6753,-3986){\makebox(0,0)[lb]{\smash{{\SetFigFont{12}{14.4}{\rmdefault}{\mddefault}{\itdefault}{\color[rgb]{0,0,0}$l_1$}%
}}}}
\put(6753,-3277){\makebox(0,0)[lb]{\smash{{\SetFigFont{12}{14.4}{\rmdefault}{\mddefault}{\itdefault}{\color[rgb]{0,0,0}$l_2$}%
}}}}
\put(6753,-1860){\makebox(0,0)[lb]{\smash{{\SetFigFont{12}{14.4}{\rmdefault}{\mddefault}{\itdefault}{\color[rgb]{0,0,0}$l_3$}%
}}}}
\put(6753,-442){\makebox(0,0)[lb]{\smash{{\SetFigFont{12}{14.4}{\rmdefault}{\mddefault}{\itdefault}{\color[rgb]{0,0,0}$l_4$}%
}}}}
\put(6752,-16507){\makebox(0,0)[lb]{\smash{{\SetFigFont{12}{14.4}{\rmdefault}{\mddefault}{\itdefault}{\color[rgb]{0,0,0}$l_1$}%
}}}}
\put(6752,-15798){\makebox(0,0)[lb]{\smash{{\SetFigFont{12}{14.4}{\rmdefault}{\mddefault}{\itdefault}{\color[rgb]{0,0,0}$l_2$}%
}}}}
\put(6752,-14381){\makebox(0,0)[lb]{\smash{{\SetFigFont{12}{14.4}{\rmdefault}{\mddefault}{\itdefault}{\color[rgb]{0,0,0}$l_3$}%
}}}}
\put(6752,-12963){\makebox(0,0)[lb]{\smash{{\SetFigFont{12}{14.4}{\rmdefault}{\mddefault}{\itdefault}{\color[rgb]{0,0,0}$l_4$}%
}}}}
\put(1432,-10375){\makebox(0,0)[lb]{\smash{{\SetFigFont{12}{14.4}{\rmdefault}{\mddefault}{\itdefault}{\color[rgb]{0,0,0}$l_1$}%
}}}}
\put(1432,-9666){\makebox(0,0)[lb]{\smash{{\SetFigFont{12}{14.4}{\rmdefault}{\mddefault}{\itdefault}{\color[rgb]{0,0,0}$l_2$}%
}}}}
\put(1432,-8249){\makebox(0,0)[lb]{\smash{{\SetFigFont{12}{14.4}{\rmdefault}{\mddefault}{\itdefault}{\color[rgb]{0,0,0}$l_3$}%
}}}}
\put(1432,-6831){\makebox(0,0)[lb]{\smash{{\SetFigFont{12}{14.4}{\rmdefault}{\mddefault}{\itdefault}{\color[rgb]{0,0,0}$l_4$}%
}}}}
\put(6752,-10357){\makebox(0,0)[lb]{\smash{{\SetFigFont{12}{14.4}{\rmdefault}{\mddefault}{\itdefault}{\color[rgb]{0,0,0}$l_1$}%
}}}}
\put(6752,-9648){\makebox(0,0)[lb]{\smash{{\SetFigFont{12}{14.4}{\rmdefault}{\mddefault}{\itdefault}{\color[rgb]{0,0,0}$l_2$}%
}}}}
\put(6752,-8231){\makebox(0,0)[lb]{\smash{{\SetFigFont{12}{14.4}{\rmdefault}{\mddefault}{\itdefault}{\color[rgb]{0,0,0}$l_3$}%
}}}}
\put(6752,-6813){\makebox(0,0)[lb]{\smash{{\SetFigFont{12}{14.4}{\rmdefault}{\mddefault}{\itdefault}{\color[rgb]{0,0,0}$l_4$}%
}}}}
\put(1432,-16525){\makebox(0,0)[lb]{\smash{{\SetFigFont{12}{14.4}{\rmdefault}{\mddefault}{\itdefault}{\color[rgb]{0,0,0}$l_1$}%
}}}}
\put(1432,-15816){\makebox(0,0)[lb]{\smash{{\SetFigFont{12}{14.4}{\rmdefault}{\mddefault}{\itdefault}{\color[rgb]{0,0,0}$l_2$}%
}}}}
\put(1432,-14399){\makebox(0,0)[lb]{\smash{{\SetFigFont{12}{14.4}{\rmdefault}{\mddefault}{\itdefault}{\color[rgb]{0,0,0}$l_3$}%
}}}}
\put(1432,-12981){\makebox(0,0)[lb]{\smash{{\SetFigFont{12}{14.4}{\rmdefault}{\mddefault}{\itdefault}{\color[rgb]{0,0,0}$l_4$}%
}}}}
\end{picture}}
\caption{Obtaining a recurrence for a region of type A when $l_1=1$.}\label{Fig:Onesidehole3}
\end{figure}

Next, we consider a region of Type $A$ with $l_1>1$. We apply Kuo condensation from  Lemma \ref{Kuolem1} with the four vertices $u,v,w,$ and $s$ chosen as in Figure \ref{Fig:Onesidehole4}(b). The locations of these vertices are similar to those in the case where $l_1=1$ above. Kuo condensation implies that the product of TGFs of the two regions in the top row of Figure \ref{Fig:Onesidehole4} is equal to the product of the TGFs of the two regions in the middle row, plus the product of the TGFs of the two regions in the bottom row. We also consider the removal of forced lozenges as shown in the figure and get the following recurrence:
\begin{align}\label{Arecurrence3}
\M(A_{x,d}((l_i)_{i=1}^{m}))&\M(B_{x,d-2}((l_i-1)_{i=1}^{m-1}))\notag\\
=&\frac{q^{2x+2d-m}+q^{-(2x+2d-m)}}{2}\cdot\M(A_{x,d-1}((l_i)_{i=1}^{m-1}))\M(B_{x,d-1}((l_i-1)_{i=1}^{m}))\notag\\
&+\M(B_{x+1,d-2}((l_i-1)_{i=1}^{m-1}))\M(A_{x-1,d}((l_i)_{i=1}^{m})).
\end{align}

We note that, unlike the case $l_1=1$ treated above, the removal of forced lozenges in the regions corresponding to the graphs $G-\{u,v,w,s\}$, $G-\{w,s\}$, and $G-\{u,s\}$ now give new regions of type $B$. It is straightforward to check that the sum of the $h$- and $x$-parameters of the last five regions in the above recurrence are all strictly less than that of the first one.

\begin{figure}\centering
\setlength{\unitlength}{3947sp}%
\begingroup\makeatletter\ifx\SetFigFont\undefined%
\gdef\SetFigFont#1#2#3#4#5{%
  \reset@font\fontsize{#1}{#2pt}%
  \fontfamily{#3}\fontseries{#4}\fontshape{#5}%
  \selectfont}%
\fi\endgroup%
\resizebox{!}{22cm}{
\begin{picture}(0,0)%
\includegraphics{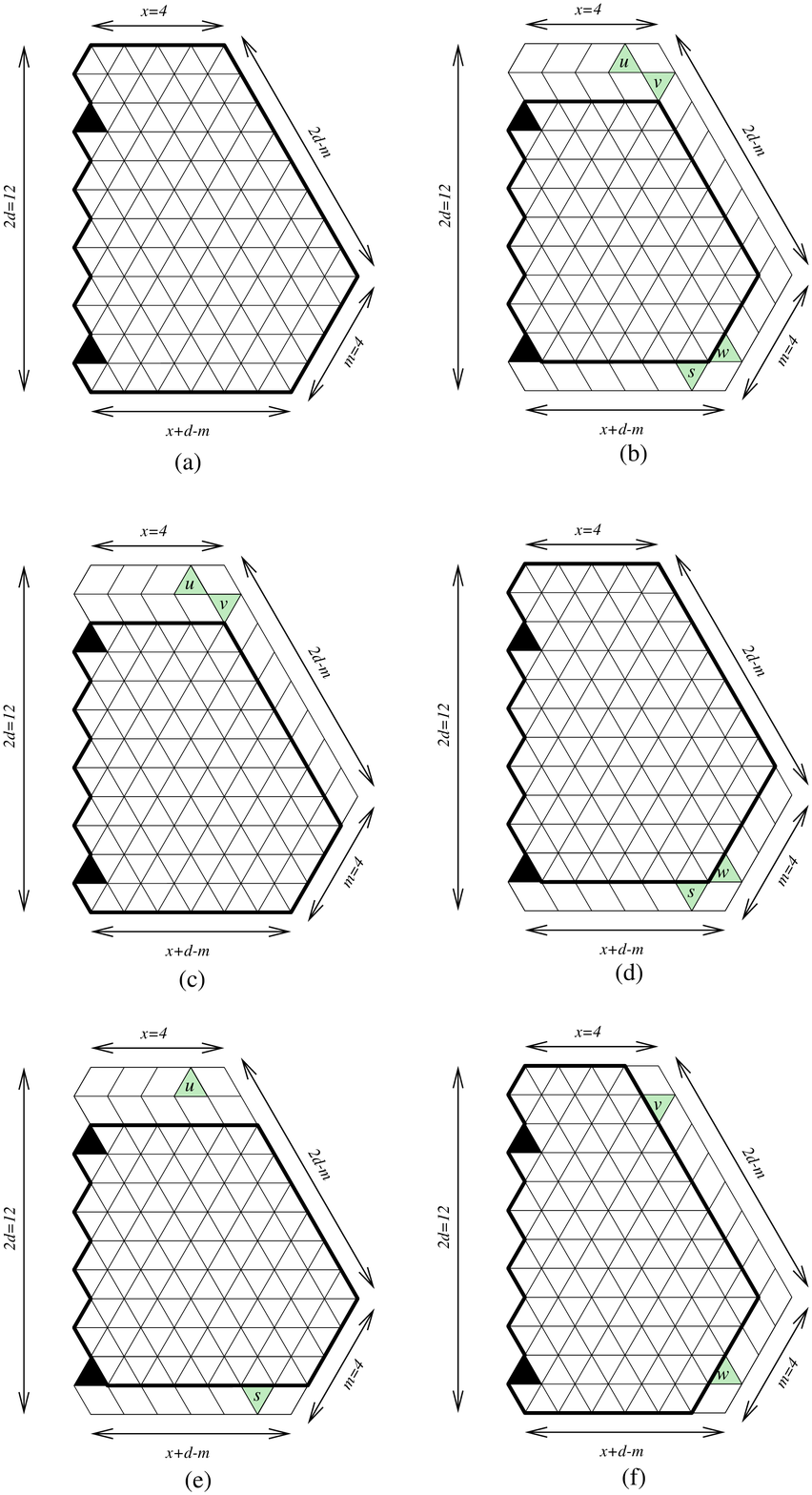}%
\end{picture}%

\begin{picture}(9964,18357)(902,-17877)
\put(6743,-13000){\makebox(0,0)[lb]{\smash{{\SetFigFont{12}{14.4}{\rmdefault}{\mddefault}{\itdefault}{\color[rgb]{0,0,0}$l_4$}%
}}}}
\put(1426,-3376){\makebox(0,0)[lb]{\smash{{\SetFigFont{12}{14.4}{\rmdefault}{\mddefault}{\itdefault}{\color[rgb]{0,0,0}$l_1$}%
}}}}
\put(1411,-2641){\makebox(0,0)[lb]{\smash{{\SetFigFont{12}{14.4}{\rmdefault}{\mddefault}{\itdefault}{\color[rgb]{0,0,0}$l_2$}%
}}}}
\put(1433,-1878){\makebox(0,0)[lb]{\smash{{\SetFigFont{12}{14.4}{\rmdefault}{\mddefault}{\itdefault}{\color[rgb]{0,0,0}$l_3$}%
}}}}
\put(1433,-460){\makebox(0,0)[lb]{\smash{{\SetFigFont{12}{14.4}{\rmdefault}{\mddefault}{\itdefault}{\color[rgb]{0,0,0}$l_4$}%
}}}}
\put(6710,-9777){\makebox(0,0)[lb]{\smash{{\SetFigFont{12}{14.4}{\rmdefault}{\mddefault}{\itdefault}{\color[rgb]{0,0,0}$l_1$}%
}}}}
\put(6695,-9042){\makebox(0,0)[lb]{\smash{{\SetFigFont{12}{14.4}{\rmdefault}{\mddefault}{\itdefault}{\color[rgb]{0,0,0}$l_2$}%
}}}}
\put(6717,-8279){\makebox(0,0)[lb]{\smash{{\SetFigFont{12}{14.4}{\rmdefault}{\mddefault}{\itdefault}{\color[rgb]{0,0,0}$l_3$}%
}}}}
\put(6717,-6861){\makebox(0,0)[lb]{\smash{{\SetFigFont{12}{14.4}{\rmdefault}{\mddefault}{\itdefault}{\color[rgb]{0,0,0}$l_4$}%
}}}}
\put(1385,-15897){\makebox(0,0)[lb]{\smash{{\SetFigFont{12}{14.4}{\rmdefault}{\mddefault}{\itdefault}{\color[rgb]{0,0,0}$l_1$}%
}}}}
\put(1370,-15162){\makebox(0,0)[lb]{\smash{{\SetFigFont{12}{14.4}{\rmdefault}{\mddefault}{\itdefault}{\color[rgb]{0,0,0}$l_2$}%
}}}}
\put(1392,-14399){\makebox(0,0)[lb]{\smash{{\SetFigFont{12}{14.4}{\rmdefault}{\mddefault}{\itdefault}{\color[rgb]{0,0,0}$l_3$}%
}}}}
\put(1392,-12981){\makebox(0,0)[lb]{\smash{{\SetFigFont{12}{14.4}{\rmdefault}{\mddefault}{\itdefault}{\color[rgb]{0,0,0}$l_4$}%
}}}}
\put(6736,-15916){\makebox(0,0)[lb]{\smash{{\SetFigFont{12}{14.4}{\rmdefault}{\mddefault}{\itdefault}{\color[rgb]{0,0,0}$l_1$}%
}}}}
\put(6721,-15181){\makebox(0,0)[lb]{\smash{{\SetFigFont{12}{14.4}{\rmdefault}{\mddefault}{\itdefault}{\color[rgb]{0,0,0}$l_2$}%
}}}}
\put(6743,-14418){\makebox(0,0)[lb]{\smash{{\SetFigFont{12}{14.4}{\rmdefault}{\mddefault}{\itdefault}{\color[rgb]{0,0,0}$l_3$}%
}}}}
\put(6725,-3387){\makebox(0,0)[lb]{\smash{{\SetFigFont{12}{14.4}{\rmdefault}{\mddefault}{\itdefault}{\color[rgb]{0,0,0}$l_1$}%
}}}}
\put(6710,-2652){\makebox(0,0)[lb]{\smash{{\SetFigFont{12}{14.4}{\rmdefault}{\mddefault}{\itdefault}{\color[rgb]{0,0,0}$l_2$}%
}}}}
\put(6732,-1889){\makebox(0,0)[lb]{\smash{{\SetFigFont{12}{14.4}{\rmdefault}{\mddefault}{\itdefault}{\color[rgb]{0,0,0}$l_3$}%
}}}}
\put(6732,-471){\makebox(0,0)[lb]{\smash{{\SetFigFont{12}{14.4}{\rmdefault}{\mddefault}{\itdefault}{\color[rgb]{0,0,0}$l_4$}%
}}}}
\put(1445,-9762){\makebox(0,0)[lb]{\smash{{\SetFigFont{12}{14.4}{\rmdefault}{\mddefault}{\itdefault}{\color[rgb]{0,0,0}$l_1$}%
}}}}
\put(1430,-9027){\makebox(0,0)[lb]{\smash{{\SetFigFont{12}{14.4}{\rmdefault}{\mddefault}{\itdefault}{\color[rgb]{0,0,0}$l_2$}%
}}}}
\put(1452,-8264){\makebox(0,0)[lb]{\smash{{\SetFigFont{12}{14.4}{\rmdefault}{\mddefault}{\itdefault}{\color[rgb]{0,0,0}$l_3$}%
}}}}
\put(1452,-6846){\makebox(0,0)[lb]{\smash{{\SetFigFont{12}{14.4}{\rmdefault}{\mddefault}{\itdefault}{\color[rgb]{0,0,0}$l_4$}%
}}}}
\end{picture}}
\caption{Obtaining a recurrence for a region of type A when $l_1>1$.}\label{Fig:Onesidehole4}
\end{figure}

We now apply Kuo condensation to the dual graph $G$ of the  region $B_{x,d}((l_i)_{i=1}^{m})$ as shown in Figure \ref{Fig:Onesidehole5}. We have the recurrence
\begin{align}\label{Arecurrence4}
\M(B_{x,d}((l_i)_{i=1}^{m}))&\M(A_{x,d-1}((l_i)_{i=1}^{m-1}))\notag\\
=&\frac{q^{2x+2d-m+1}+q^{-(2x+2d-m+1)}}{2}\M(B_{x,d-1}((l_i)_{i=1}^{m-1}))\M(A_{x,d}((l_i)_{i=1}^{m}))\notag\\
&+\M(A_{x+1,d-1}((l_i)_{i=1}^{m-1}))\M(B_{x-1,d}((l_i)_{i=1}^{m})).
\end{align}
Again, the sum of the $h$- and $x$-parameters of the last five regions in the above recurrence are all strictly less than that of the first one.

\begin{figure}\centering
\setlength{\unitlength}{3947sp}%
\begingroup\makeatletter\ifx\SetFigFont\undefined%
\gdef\SetFigFont#1#2#3#4#5{%
  \reset@font\fontsize{#1}{#2pt}%
  \fontfamily{#3}\fontseries{#4}\fontshape{#5}%
  \selectfont}%
\fi\endgroup%
\resizebox{!}{22cm}{
\begin{picture}(0,0)%
\includegraphics{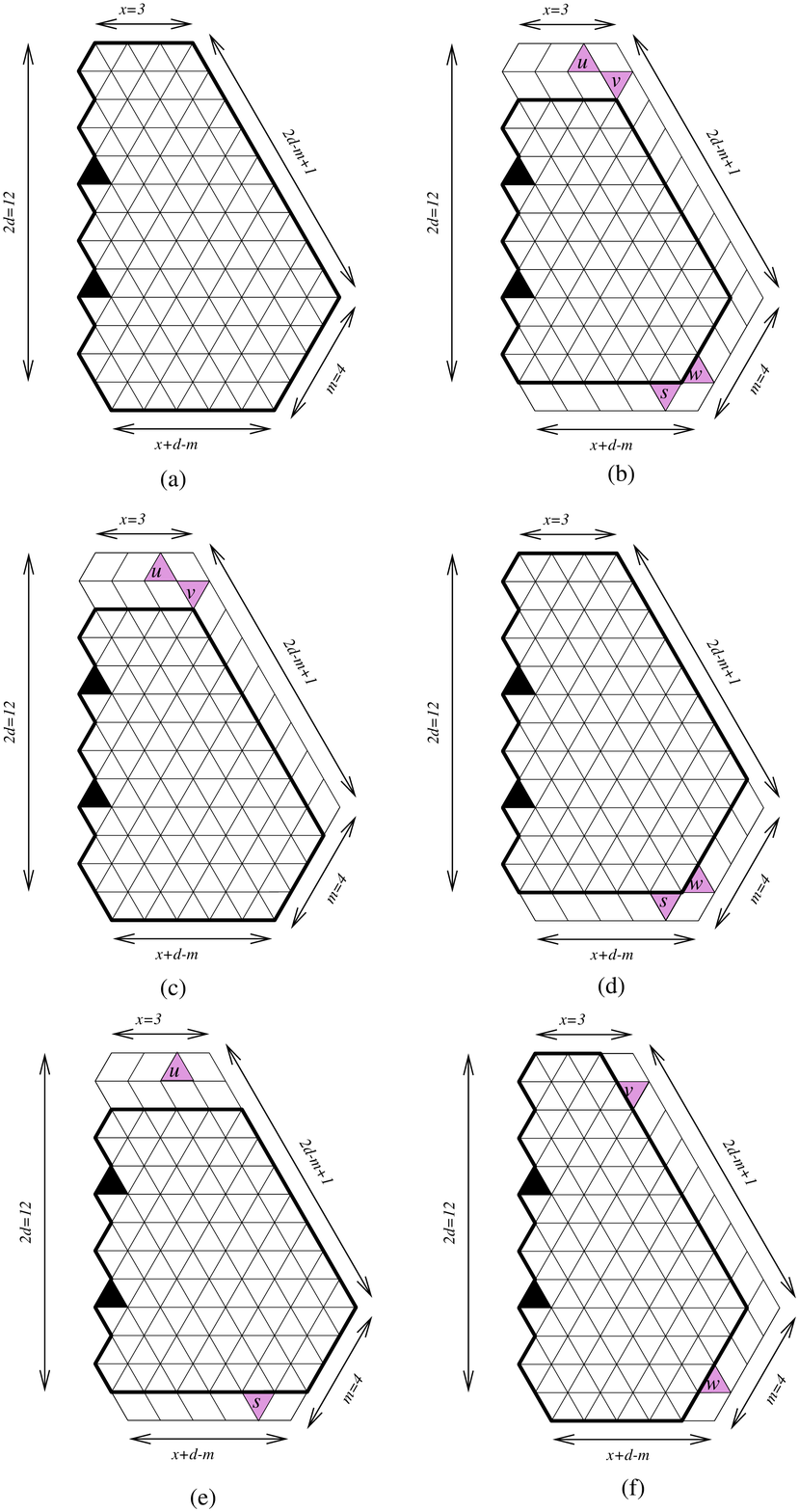}%
\end{picture}%
\begin{picture}(10050,19006)(1023,-18174)
\put(6946,-129){\makebox(0,0)[lb]{\smash{{\SetFigFont{12}{14.4}{\rmdefault}{\mddefault}{\itdefault}{\color[rgb]{0,0,0}$l_4$}%
}}}}
\put(6950,-819){\makebox(0,0)[lb]{\smash{{\SetFigFont{12}{14.4}{\rmdefault}{\mddefault}{\itdefault}{\color[rgb]{0,0,0}$l_3$}%
}}}}
\put(6943,-2229){\makebox(0,0)[lb]{\smash{{\SetFigFont{12}{14.4}{\rmdefault}{\mddefault}{\itdefault}{\color[rgb]{0,0,0}$l_2$}%
}}}}
\put(6950,-3647){\makebox(0,0)[lb]{\smash{{\SetFigFont{12}{14.4}{\rmdefault}{\mddefault}{\itdefault}{\color[rgb]{0,0,0}$l_1$}%
}}}}
\put(1629,-122){\makebox(0,0)[lb]{\smash{{\SetFigFont{12}{14.4}{\rmdefault}{\mddefault}{\itdefault}{\color[rgb]{0,0,0}$l_4$}%
}}}}
\put(1633,-812){\makebox(0,0)[lb]{\smash{{\SetFigFont{12}{14.4}{\rmdefault}{\mddefault}{\itdefault}{\color[rgb]{0,0,0}$l_3$}%
}}}}
\put(1626,-2222){\makebox(0,0)[lb]{\smash{{\SetFigFont{12}{14.4}{\rmdefault}{\mddefault}{\itdefault}{\color[rgb]{0,0,0}$l_2$}%
}}}}
\put(1633,-3640){\makebox(0,0)[lb]{\smash{{\SetFigFont{12}{14.4}{\rmdefault}{\mddefault}{\itdefault}{\color[rgb]{0,0,0}$l_1$}%
}}}}
\put(6952,-6521){\makebox(0,0)[lb]{\smash{{\SetFigFont{12}{14.4}{\rmdefault}{\mddefault}{\itdefault}{\color[rgb]{0,0,0}$l_4$}%
}}}}
\put(6956,-7211){\makebox(0,0)[lb]{\smash{{\SetFigFont{12}{14.4}{\rmdefault}{\mddefault}{\itdefault}{\color[rgb]{0,0,0}$l_3$}%
}}}}
\put(6949,-8621){\makebox(0,0)[lb]{\smash{{\SetFigFont{12}{14.4}{\rmdefault}{\mddefault}{\itdefault}{\color[rgb]{0,0,0}$l_2$}%
}}}}
\put(6956,-10039){\makebox(0,0)[lb]{\smash{{\SetFigFont{12}{14.4}{\rmdefault}{\mddefault}{\itdefault}{\color[rgb]{0,0,0}$l_1$}%
}}}}
\put(1635,-6514){\makebox(0,0)[lb]{\smash{{\SetFigFont{12}{14.4}{\rmdefault}{\mddefault}{\itdefault}{\color[rgb]{0,0,0}$l_4$}%
}}}}
\put(1639,-7204){\makebox(0,0)[lb]{\smash{{\SetFigFont{12}{14.4}{\rmdefault}{\mddefault}{\itdefault}{\color[rgb]{0,0,0}$l_3$}%
}}}}
\put(1632,-8614){\makebox(0,0)[lb]{\smash{{\SetFigFont{12}{14.4}{\rmdefault}{\mddefault}{\itdefault}{\color[rgb]{0,0,0}$l_2$}%
}}}}
\put(1639,-10032){\makebox(0,0)[lb]{\smash{{\SetFigFont{12}{14.4}{\rmdefault}{\mddefault}{\itdefault}{\color[rgb]{0,0,0}$l_1$}%
}}}}
\put(7154,-12791){\makebox(0,0)[lb]{\smash{{\SetFigFont{12}{14.4}{\rmdefault}{\mddefault}{\itdefault}{\color[rgb]{0,0,0}$l_4$}%
}}}}
\put(7158,-13481){\makebox(0,0)[lb]{\smash{{\SetFigFont{12}{14.4}{\rmdefault}{\mddefault}{\itdefault}{\color[rgb]{0,0,0}$l_3$}%
}}}}
\put(7151,-14891){\makebox(0,0)[lb]{\smash{{\SetFigFont{12}{14.4}{\rmdefault}{\mddefault}{\itdefault}{\color[rgb]{0,0,0}$l_2$}%
}}}}
\put(7158,-16309){\makebox(0,0)[lb]{\smash{{\SetFigFont{12}{14.4}{\rmdefault}{\mddefault}{\itdefault}{\color[rgb]{0,0,0}$l_1$}%
}}}}
\put(1837,-12784){\makebox(0,0)[lb]{\smash{{\SetFigFont{12}{14.4}{\rmdefault}{\mddefault}{\itdefault}{\color[rgb]{0,0,0}$l_4$}%
}}}}
\put(1841,-13474){\makebox(0,0)[lb]{\smash{{\SetFigFont{12}{14.4}{\rmdefault}{\mddefault}{\itdefault}{\color[rgb]{0,0,0}$l_3$}%
}}}}
\put(1834,-14884){\makebox(0,0)[lb]{\smash{{\SetFigFont{12}{14.4}{\rmdefault}{\mddefault}{\itdefault}{\color[rgb]{0,0,0}$l_2$}%
}}}}
\put(1841,-16302){\makebox(0,0)[lb]{\smash{{\SetFigFont{12}{14.4}{\rmdefault}{\mddefault}{\itdefault}{\color[rgb]{0,0,0}$l_1$}%
}}}}
\end{picture}}
\caption{Obtaining a recurrence for region of type B.}\label{Fig:Onesidehole5}
\end{figure}

Denote by $f_{x,d}(\textbf{l})$ and $g_{x,y}(\textbf{l})$ the expressions on the right-hand sides of (\ref{Aformula}) and (\ref{Bformula}), respectively. To finish the proof we will verify that  $f_{x,d}(\textbf{l})$ and $g_{x,y}(\textbf{l})$ satisfy recurrences (\ref{Arecurrence2}), (\ref{Arecurrence3}), and (\ref{Arecurrence4}). Equivalently, we need to verify the following three identities (the first two for Theorem \ref{typeAthm} and the third for Theorem \ref{typeBthm}):
\begin{align}\label{Arecurrence5}
\left(\frac{q^{2x+2d-m}+q^{-(2x+2d-m)}}{2}\right)\cdot&\frac{f_{x,d-1}((l_i)_{i=1}^{m-1})f_{x,d-1}((l_i-1)_{i=2}^{m})}{f_{x,d}((l_i)_{i=1}^{m})f_{x,d-2}((l_i-1)_{i=2}^{m-1})}\notag\\
&+\frac{f_{x+1,d-2}((l_i-1)_{i=2}^{m-1})f_{x-1,d}((l_i)_{i=1}^{m})}{f_{x,d}((l_i)_{i=1}^{m})f_{x,d-2}((l_i-1)_{i=2}^{m-1})}=1
\end{align}
when $l_1=1$ and $l_m=d$;
\begin{align}\label{Arecurrence6}
\left(\frac{q^{2x+2d-m}+q^{-(2x+2d-m)}}{2}\right)\cdot&\frac{f_{x,d-1}((l_i)_{i=1}^{m-1})g_{x,d-1}((l_i-1)_{i=1}^{m})}{f_{x,d}((l_i)_{i=1}^{m})g_{x,d-2}((l_i-1)_{i=1}^{m-1})}\notag\\
&+\frac{g_{x+1,d-2}((l_i-1)_{i=1}^{m-1})f_{x-1,d}((l_i)_{i=1}^{m})}{f_{x,d}((l_i)_{i=1}^{m})g_{x,d-2}((l_i-1)_{i=1}^{m-1})}=1
\end{align}
when $l_1>1$ and $l_m=d$;
\begin{align}\label{Arecurrence7}
\left(\frac{q^{2x+2d-m+1}+q^{-(2x+2d-m+1)}}{2}\right)\cdot &\frac{g_{x,d-1}((l_i)_{i=1}^{m-1})f_{x,d}((l_i)_{i=1}^{m})}{g_{x,d}((l_i)_{i=1}^{m})f_{x,d-1}((l_i)_{i=1}^{m-1})}\notag\\
&+\frac{f_{x+1,d-1}((l_i)_{i=1}^{m-1})g_{x-1,d}((l_i)_{i=1}^{m})}{g_{x,d}((l_i)_{i=1}^{m})f_{x,d-1}((l_i)_{i=1}^{m-1})}=1
\end{align}
when $l_m=d$.

This verification is straightforward, as it requires only basic algebra knowledge, however, it is not obvious. For completeness of the proof, we briefly sketch the verification below. 

We first verify (\ref{Arecurrence5}) with $l_1=1$ and $l_m=d$. Let us simplify the fraction involving four $f$-functions in the first term on the left-hand side.

By definition, the four powers of $2$ reduce to just $2$; the four powers of $q$ reduce to $q^{2x+2d-m}$. The four factors corresponding to the fraction $\frac{\prod_{1\leq i < j \leq m}[2(l_j-l_i)]_{q^2}}{\prod_{i=1}^{m}[2l_i-1]_{q^2}!}$ in the $f$-function
simplify to $[2l_m-1]_{q^2}=[2d-1]_{q^2}$. 

Let us consider the contribution of the four factors corresponding to the double-product in the second row of the right-hand side of (\ref{Aformula}). Most of the factors cancel out, and what remains is 
\[\frac{[2x+2d-m]_{q^2}}{[2x+2d-1]_{q^2}[2x+2d]_{q^2}}.\]
Finally, the factors corresponding to the double product  in the last row of the right-hand side of (\ref{Aformula}) simplify
\[\frac{[2(x+d)]_{q^2}}{[2(x+d-m+l_m)]_{q^2}}.\]
The first term on the  the left-hand side of (\ref{Arecurrence5}) now reduces to
\begin{equation}
(q^{4x+4d-2m}+1)\frac{[2d-1]_{q^2}[2x+2d-m]_{q^2}}{[2x+2d-1]_{q^2}[2(x+2d-m)]_{q^2}}.
\end{equation}

Similarly, one can simplify the  second term to
\begin{equation}
(q^{2d-1})\frac{[2x+2d-2m+1]_{q^2}[2x]_{q^2}}{[2x+2d-1]_{q^2}[2(x+2d-m)]_{q^2}}.
\end{equation}
The identity  (\ref{Arecurrence5}) is now equivalent to
\begin{equation}
(q^{4x+4d-2m}+1)\frac{[2d-1]_{q^2}[2x+2d-m]_{q^2}}{[2x+2d-1]_{q^2}[2(x+2d-m)]_{q^2}}+(q^{4d-2})\frac{[2x+2d-2m+1]_{q^2}[2x]_{q^2}}{[2x+2d-1]_{q^2}[2(x+2d-m)]_{q^2}}=1,
\end{equation}
which can be verified directly.

The identities (\ref{Arecurrence6}) and (\ref{Arecurrence7}) can be confirmed similarly. This finishes the proof.
\end{proof}

The combined proof of Theorems \ref{typeCthm} and \ref{typeDthm} is essentially the same and will be omitted. Strictly speaking, the regions of type C and type D are different from the regions of type A and type B due to the difference in the weighting of vertical lozenges along the vertical side. However, all the recurrences in the combined proof of Theorems \ref{typeCthm} and \ref{typeDthm} still hold, as we have no forced vertical lozenges along the vertical side of our region. Therefore, all arguments in the proof still work well for the case of type C and type D regions.

We now prove Theorem \ref{typeSthm}; the proof of Theorem \ref{typeTthm} is similar and hence omitted.

\begin{proof}[Proof of Theorem \ref{typeSthm}]
We prove (\ref{TypeSformula}) by induction on $x+2u+2d$.

The base cases are the situations when in which one of the parameters $u,d,m,$ or $n$ is equal to zero.

If $m=0$, then after removing forced lozenges, our region becomes a region of type $D$, as shown in Figure \ref{Fig:TwosideBase2}(a). Then (\ref{TypeSformula}) follows from Theorem \ref{typeDthm}.

If $n=0$, then after removed forced lozenges, our region becomes a region of type $A$ (reflected over a horizontal line), as illustrated in Figure \ref{Fig:TwosideBase2}(b). Then (\ref{TypeSformula}) follows from Theorem \ref{typeAthm}.

If $u=0$, then $n=0$, then this case reduces to the case treated above, as $u\geq n$. Similarly, if $d=0$, then $m=0$, and this case reduces to the case treated above, as $d\geq m$.

\begin{figure}\centering
\setlength{\unitlength}{3947sp}%
\begingroup\makeatletter\ifx\SetFigFont\undefined%
\gdef\SetFigFont#1#2#3#4#5{%
  \reset@font\fontsize{#1}{#2pt}%
  \fontfamily{#3}\fontseries{#4}\fontshape{#5}%
  \selectfont}%
\fi\endgroup%
\resizebox{!}{12cm}{
\begin{picture}(0,0)%
\includegraphics{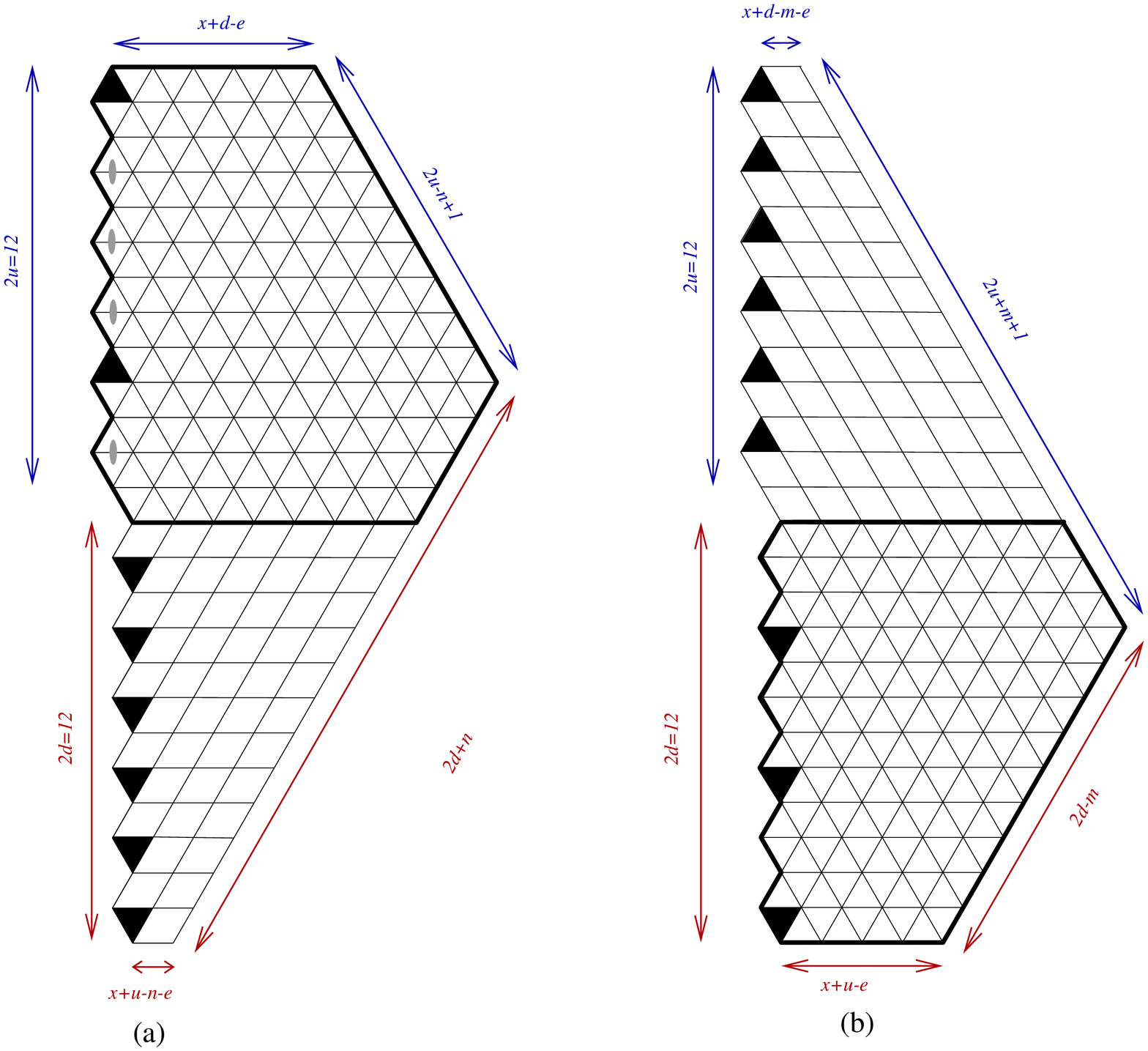}%
\end{picture}%
%
%

\begin{picture}(11629,10674)(1077,-11602)
\put(1651,-5581){\makebox(0,0)[lb]{\smash{{\SetFigFont{12}{14.4}{\rmdefault}{\mddefault}{\itdefault}{\color[rgb]{0,0,.69}$h_1$}%
}}}}
\put(1674,-4171){\makebox(0,0)[lb]{\smash{{\SetFigFont{12}{14.4}{\rmdefault}{\mddefault}{\itdefault}{\color[rgb]{0,0,.69}$h_2$}%
}}}}
\put(1651,-3466){\makebox(0,0)[lb]{\smash{{\SetFigFont{12}{14.4}{\rmdefault}{\mddefault}{\itdefault}{\color[rgb]{0,0,.69}$h_3$}%
}}}}
\put(1651,-2746){\makebox(0,0)[lb]{\smash{{\SetFigFont{12}{14.4}{\rmdefault}{\mddefault}{\itdefault}{\color[rgb]{0,0,.69}$h_4$}%
}}}}
\put(8420,-6678){\makebox(0,0)[lb]{\smash{{\SetFigFont{12}{14.4}{\rmdefault}{\mddefault}{\itdefault}{\color[rgb]{.69,0,0}$l_1$}%
}}}}
\put(8390,-8050){\makebox(0,0)[lb]{\smash{{\SetFigFont{12}{14.4}{\rmdefault}{\mddefault}{\itdefault}{\color[rgb]{.69,0,0}$l_2$}%
}}}}
\put(8390,-9513){\makebox(0,0)[lb]{\smash{{\SetFigFont{12}{14.4}{\rmdefault}{\mddefault}{\itdefault}{\color[rgb]{.69,0,0}$l_3$}%
}}}}
\end{picture}
}
\caption{The cases in which (a) $m=0$ and (b) $n=0$ in the proof of Theorem \ref{typeSthm}.} \label{Fig:TwosideBase2}
\end{figure}

For the induction step, we assume that $u,d,m,$ and $n$ are all positive and that the theorem holds for any $S$-type regions in which the sum of the $x$-parameter, twice the $u$-parameter, and twice the $d$-parameter is strictly less than $x+2u+2d$. We need to show that (\ref{TypeSformula}) holds for any region $S_{x,u,d}((l_i)_{i=1}^{m};(h_j)_{j=1}^n)$. 

We first observe that (\ref{TypeSformula}) holds if $x=0$. Indeed, if $x=0$, then at least one of the north or the south side-lengths of the region is equal to $0$. In this case, we can remove forced lozenges to obtain a smaller $S$-type region as in Figures \ref{Fig:TwosideBase}(a) for the case in which $d-m>u-n$, (b) for the case in which $d-m<u-n$, and (c) for the case in which $d-m=u-n>0$. Hence (\ref{TypeSformula}) follows from the induction hypothesis.
In case where $d-m=u-n=0$, our region has only one tiling  consisting of all vertical lozenges (see Figure \ref{Fig:TwosideBase}(d)). It is easy to  calculate the weight of this tiling and verify (\ref{TypeSformula}).

\begin{figure}\centering
\includegraphics[width=12cm]{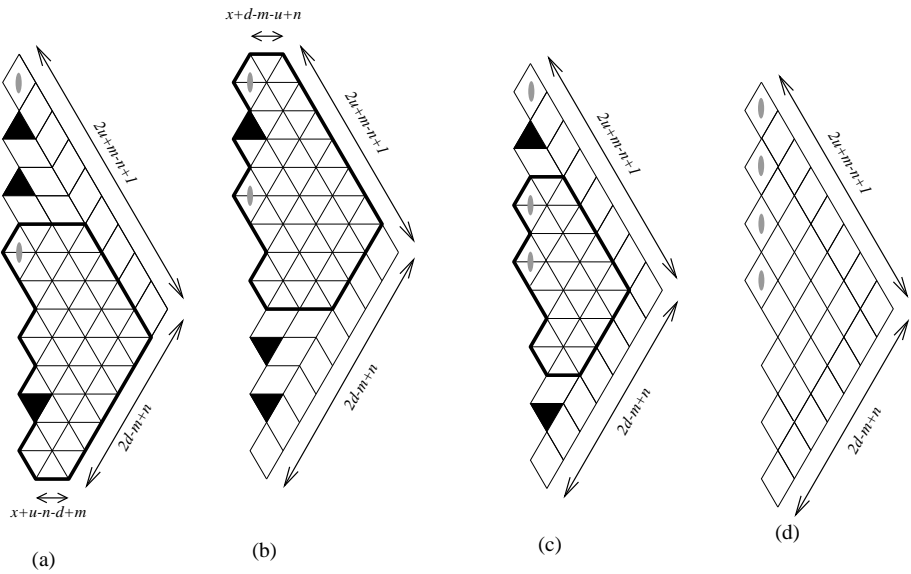}
\caption{Obtaining a smaller $S$-type region when $x=0$.}\label{Fig:TwosideBase}
\end{figure}

If $h_n<u$ or $l_m<d$, then after removing forced lozenges on the top or bottom of the regions, we get the region $S_{x+u-h_n+d-l_m,h_n,l_m}((l_i)_{i=1}^{m};(h_j)_{j=1}^n)$ (see Figure \ref{Fig:Twosidehole2}(a)). Thus
\begin{equation}
\M(S_{x,u,d}((l_i)_{i=1}^{m};(h_j)_{j=1}^n))=\M(S_{x+u-h_n+d-l_m,h_n,l_m}((l_i)_{i=1}^{m};(h_j)_{j=1}^n)),
\end{equation}
and (\ref{TypeSformula}) follows from the induction hypothesis. 

\begin{figure}\centering
\setlength{\unitlength}{3947sp}%
\begingroup\makeatletter\ifx\SetFigFont\undefined%
\gdef\SetFigFont#1#2#3#4#5{%
  \reset@font\fontsize{#1}{#2pt}%
  \fontfamily{#3}\fontseries{#4}\fontshape{#5}%
  \selectfont}%
\fi\endgroup%
\resizebox{!}{12cm}{
\begin{picture}(0,0)%
\includegraphics{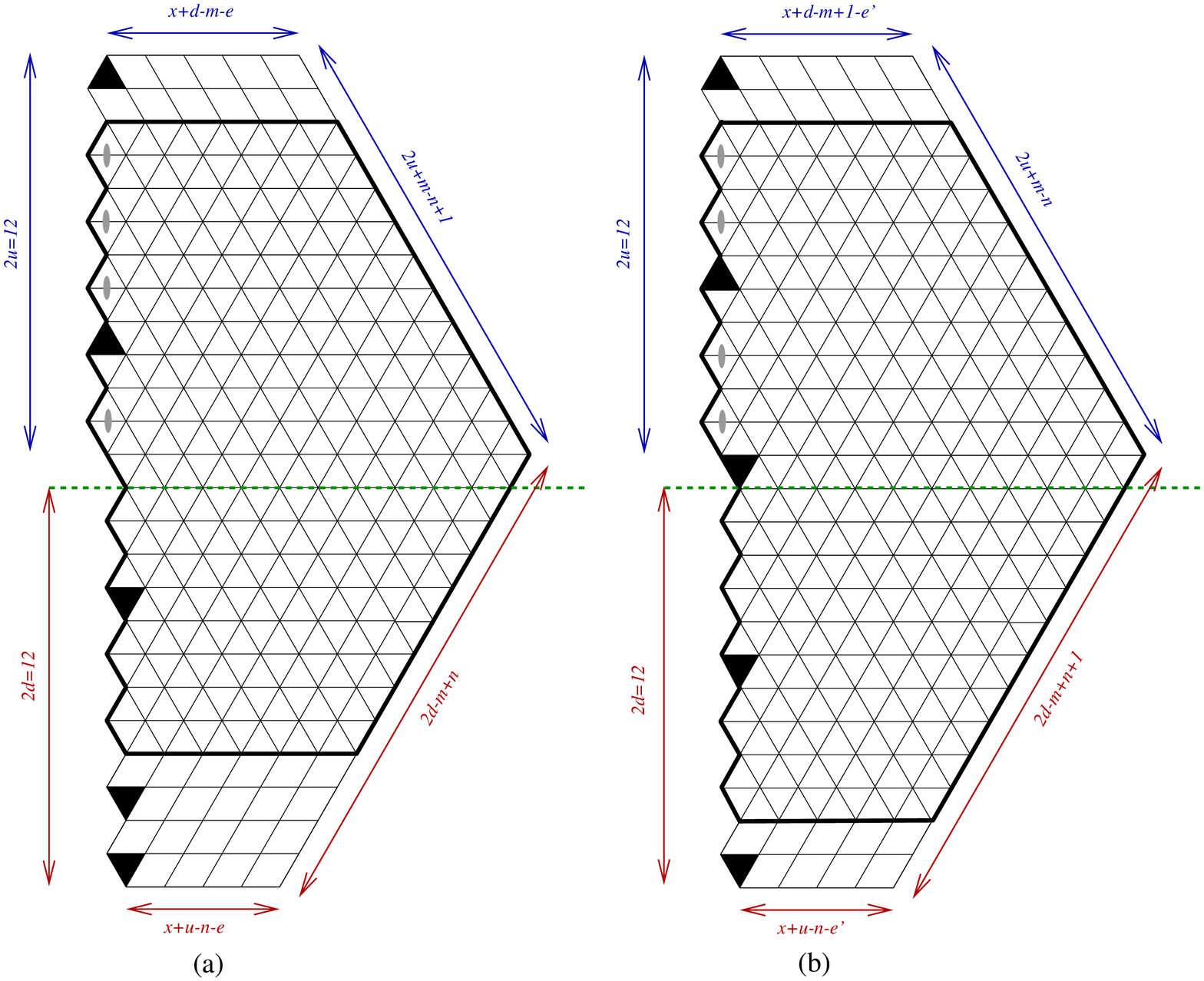}%
\end{picture}%
%
%

\begin{picture}(12876,10492)(1077,-11510)
\put(8394,-9504){\makebox(0,0)[lb]{\smash{{\SetFigFont{12}{14.4}{\rmdefault}{\mddefault}{\itdefault}{\color[rgb]{.69,0,0}$l_4$}%
}}}}
\put(1846,-6676){\makebox(0,0)[lb]{\smash{{\SetFigFont{12}{14.4}{\rmdefault}{\mddefault}{\itdefault}{\color[rgb]{.69,0,0}$l_1$}%
}}}}
\put(1846,-8079){\makebox(0,0)[lb]{\smash{{\SetFigFont{12}{14.4}{\rmdefault}{\mddefault}{\itdefault}{\color[rgb]{.69,0,0}$l_2$}%
}}}}
\put(1846,-8791){\makebox(0,0)[lb]{\smash{{\SetFigFont{12}{14.4}{\rmdefault}{\mddefault}{\itdefault}{\color[rgb]{.69,0,0}$l_3$}%
}}}}
\put(1629,-5581){\makebox(0,0)[lb]{\smash{{\SetFigFont{12}{14.4}{\rmdefault}{\mddefault}{\itdefault}{\color[rgb]{0,0,.69}$h_1$}%
}}}}
\put(1644,-4156){\makebox(0,0)[lb]{\smash{{\SetFigFont{12}{14.4}{\rmdefault}{\mddefault}{\itdefault}{\color[rgb]{0,0,.69}$h_2$}%
}}}}
\put(1636,-3489){\makebox(0,0)[lb]{\smash{{\SetFigFont{12}{14.4}{\rmdefault}{\mddefault}{\itdefault}{\color[rgb]{0,0,.69}$h_3$}%
}}}}
\put(1636,-2761){\makebox(0,0)[lb]{\smash{{\SetFigFont{12}{14.4}{\rmdefault}{\mddefault}{\itdefault}{\color[rgb]{0,0,.69}$h_4$}%
}}}}
\put(8401,-7404){\makebox(0,0)[lb]{\smash{{\SetFigFont{12}{14.4}{\rmdefault}{\mddefault}{\itdefault}{\color[rgb]{.69,0,0}$l_2$}%
}}}}
\put(8394,-8814){\makebox(0,0)[lb]{\smash{{\SetFigFont{12}{14.4}{\rmdefault}{\mddefault}{\itdefault}{\color[rgb]{.69,0,0}$l_3$}%
}}}}
\put(8184,-5619){\makebox(0,0)[lb]{\smash{{\SetFigFont{12}{14.4}{\rmdefault}{\mddefault}{\itdefault}{\color[rgb]{0,0,.69}$h_1$}%
}}}}
\put(8176,-4899){\makebox(0,0)[lb]{\smash{{\SetFigFont{12}{14.4}{\rmdefault}{\mddefault}{\itdefault}{\color[rgb]{0,0,.69}$h_2$}%
}}}}
\put(8184,-3496){\makebox(0,0)[lb]{\smash{{\SetFigFont{12}{14.4}{\rmdefault}{\mddefault}{\itdefault}{\color[rgb]{0,0,.69}$h_3$}%
}}}}
\put(8184,-2769){\makebox(0,0)[lb]{\smash{{\SetFigFont{12}{14.4}{\rmdefault}{\mddefault}{\itdefault}{\color[rgb]{0,0,.69}$h_4$}%
}}}}
\put(8394,-6699){\makebox(0,0)[lb]{\smash{{\SetFigFont{12}{14.4}{\rmdefault}{\mddefault}{\itdefault}{\color[rgb]{.69,0,0}$l_1$}%
}}}}
\end{picture}}
\caption{Obtaining a ``smaller" region in the cases where $h_n<u$ or $l_m<d$ by removing forced lozenges.}\label{Fig:Twosidehole2}
\end{figure}

For the rest of the proof, we assume that $x>0$, $u=h_n$, and $d=l_m$. We now apply Kuo condensation from Lemma \ref{Kuolem1} and obtain the regions shown in Figure \ref{Fig:Twosidehole3}. In particular, the product of the TGFs of the two regions in the top row of Figure \ref{Fig:Twosidehole3} is equal to the product of the TGFs of the two regions in the middle row, plus the product of the TGFs of the two regions in the bottom row. Investigating the removal of forced lozenges yielded by the removal of the $u$-, $v$-, $w$-, $s$-triangles, we obtain from the figure the following recurrence:

\begin{figure}\centering
\includegraphics[width=9cm]{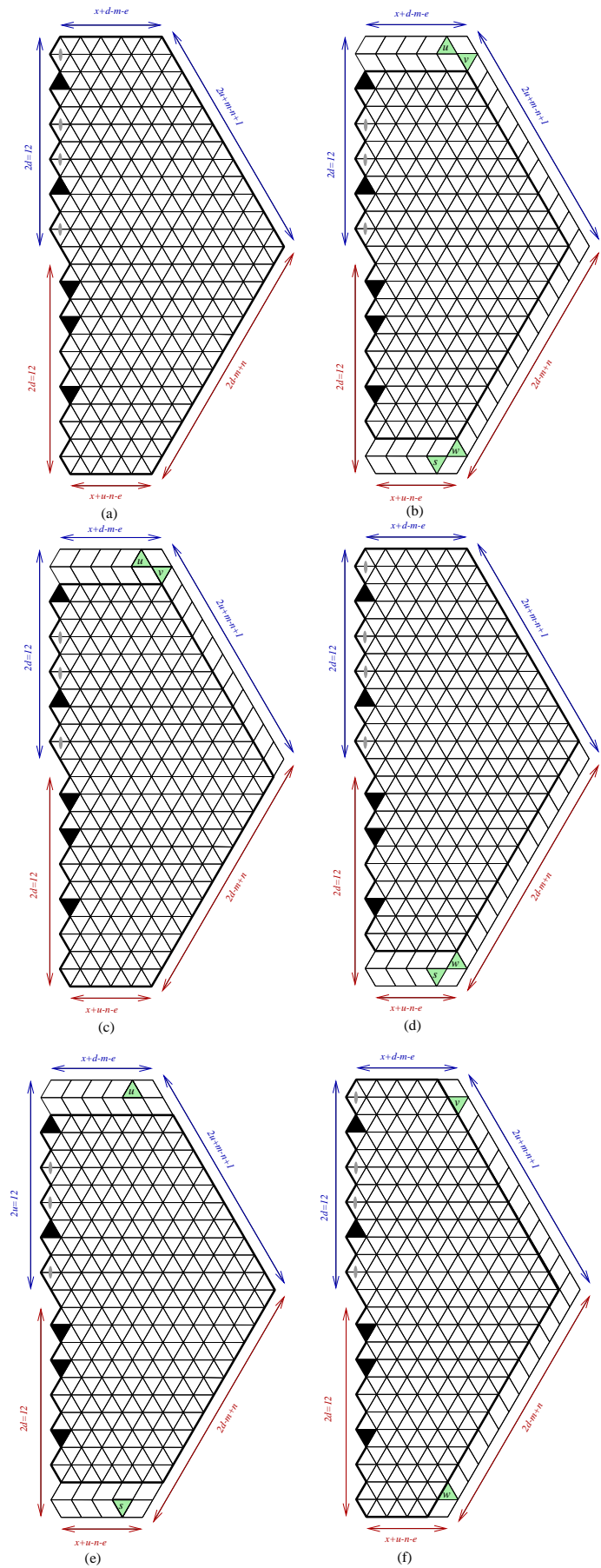}%
\caption{Obtaining a recurrence for the tiling generating function of regions of type S.}\label{Fig:Twosidehole3}
\end{figure}

\begin{align}\label{Srecurrence1}
\M(S_{x,u,d}((l_i)_{i=1}^{m};(h_j)_{j=1}^n))&\M(S_{x,u-1,d-1}((l_i)_{i=1}^{m-1};(h_j)_{j=1}^{n-1}))\notag\\
&=\wt(l_0) \cdot \M(S_{x,u-1,d}((l_i)_{i=1}^{m};(h_j)_{j=1}^{n-1}))\M(S_{x,u,d-1}((l_i)_{i=1}^{m-1};(h_j)_{j=1}^{n}))\notag\\
&+\M(S_{x+1,u-1,d-1}((l_i)_{i=1}^{m-1};(h_j)_{j=1}^{n-1}))\M(S_{x-1,u,d}((l_i)_{i=1}^{m};(h_j)_{j=1}^{n})),
\end{align}
where $\wt(l_0)$ is the weight of the right-most vertical lozenge of the region. It is straightforward to see that
\begin{equation}
\wt(l_0)=\frac{q^{(2x+2u+2d-2e-m-n)}+q^{-(2x+2u+2d-2e-m-n)}}{2}.
\end{equation}
Therefore the above recurrence becomes

\begin{align}\label{Srecurrence2}
\M(S_{x,u,d}&((l_i)_{i=1}^{m};(h_j)_{j=1}^n))\M(S_{x,u-1,d-1}((l_i)_{i=1}^{m-1};(h_j)_{j=1}^{n-1}))\notag\\
&=\frac{q^{(2x+2u+2d-2e-m-n)}+q^{-(2x+2u+2d-2e-m-n)}}{2} \notag\\
&\times\M(S_{x,u-1,d}((l_i)_{i=1}^{m};(h_j)_{j=1}^{n-1}))\M(S_{x,u,d-1}((l_i)_{i=1}^{m-1};(h_j)_{j=1}^{n}))\notag\\
&+\M(S_{x+1,u-1,d-1}((l_i)_{i=1}^{m-1};(h_j)_{j=1}^{n-1}))\M(S_{x-1,u,d}((l_i)_{i=1}^{m};(h_j)_{j=1}^{n})).
\end{align}

Our remaining job is to verify that the expression on the right-hand side of (\ref{TypeSformula}) satisfies the same recurrence. Equivalently, we need to show
\begin{align}\label{Srecurrence3}
&\left(\frac{q^{(2x+2u+2d-2e-m-n)}+q^{-(2x+2u+2d-2e-m-n)}}{2}\right) \frac{P_{x,u-1,d}((l_i)_{i=1}^{m};(h_j)_{j=1}^{n-1})P_{x,u,d-1}((l_i)_{i=1}^{m-1};(h_j)_{j=1}^{n})}{P_{x,u,d}((l_i)_{i=1}^{m};(h_j)_{j=1}^n)P_{x,u-1,d-1}((l_i)_{i=1}^{m-1};(h_j)_{j=1}^{n-1})}\notag\\
&+\frac{P_{x+1,u-1,d-1}((l_i)_{i=1}^{m-1};(h_j)_{j=1}^{n-1})P_{x-1,u,d}((l_i)_{i=1}^{m};(h_j)_{j=1}^{n})}{P_{x,u,d}((l_i)_{i=1}^{m};(h_j)_{j=1}^n)P_{x,u-1,d-1}((l_i)_{i=1}^{m-1};(h_j)_{j=1}^{n-1})}=1.
\end{align}
Again, the verification of this identity is straightforward, but not trivial. For the completeness, we briefly show the process below.

Let us consider the fraction involving the four $P$-polynomials in the first term on the left-hand side. Using the definition of the $P$-polynomial, one can simplify this fraction to
\begin{align}
2&q^{(2x+2u+2d-2e-m-n)}[2d+2u]_{q^2}\frac{[2x+2d+2u-2e-2]_{q^2}}{[2x+2d+2u-2e]_{q^2}}\notag\\
&\times \frac{[2x+2d+2u-2e-m-n]_{q^2}}{[2x+2d+2u-2e-1]_{q^2}[2x+2d+2u-2e]_{q^2}}\notag\\
&\times \frac{[2x+2d+2u-2e-1]_{q^2}}{[2x+2d+2u-2e-2]_{q^2}}\frac{[2x+2d+2u-2e]_{q^2}}{[2x+4d+2u-2e-2m]_{q^2}[2x+2d+4u-2e-2n]_{q^2}}\notag\\
&=\frac{2q^{(2x+2u+2d-2e-m-n)}[2d+2u]_{q^2}[2x+2d+2u-2e-m-n]_{q^2}}{[2x+4d+2u-2e-2m]_{q^2}[2x+2d+4u-2e-2n]_{q^2}}.
\end{align}
Then the first term of (\ref{Srecurrence3}) now becomes
\begin{align}
\frac{(q^{(4x+4u+4d-4e-2m-2n)}+1)[2d+2u]_{q^2}[2x+2d+2u-2e-m-n]_{q^2}}{[2x+4d+2u-2e-2m]_{q^2}[2x+2d+4u-2e-2n]_{q^2}}.
\end{align}
Similarly, one could simplify the second term on the left-hand side of (\ref{Srecurrence3}) to
\begin{align}
\frac{q^{4d+4u}[2x+2u-2e-2n]_{q^2}[2x+2d-2e-2m]_{q^2}}{[2x+4d+2u-2e-2m]_{q^2}[2x+2d+4u-2e-2n]_{q^2}}.
\end{align}
Then (\ref{Srecurrence3}) is equivalent to 
\begin{align}
&\frac{(q^{(4x+4u+4d-4e-2m-2n)}+1)[2d+2u]_{q^2}[2x+2d+2u-2e-m-n]_{q^2}}{[2x+4d+2u-2e-2m]_{q^2}[2x+2d+4u-2e-2n]_{q^2}}\notag\\
&+\frac{q^{4d+4u}[2x+2u-2e-2n]_{q^2}[2x+2d-2e-2m]_{q^2}}{[2x+4d+2u-2e-2m]_{q^2}[2x+2d+4u-2e-2n]_{q^2}}=1,
\end{align}
which is a true identity. This finishes the proof.
\end{proof}

\bibliographystyle{plain}
\bibliography{WHHedits}

\end{document}